\theoremstyle{definition}
\newtheorem{thm}{Theorem}[section]
\newtheorem{lem}[thm]{Lemma}
\newtheorem{prop}[thm]{Proposition}
\newtheorem{ques}[thm]{Question}
\newtheorem{cor}[thm]{Corollary}
\newtheorem{de}[thm]{Definition}
\newtheorem{ex}[thm]{Example}
\newtheorem{conj}[thm]{Conjecture}
\newtheorem{rem}[thm]{Remark}
\def \ra {\rightarrow}
\def \N {\mathbb N}
\def \Z {\mathbb Z}
\def \B {\mathcal{B}}
\def \F {\mathcal{F}}
\def \J {\mathcal{J}}
\def \P {\mathcal{P}}
\def \U {\mathcal{U}}
\def \Ind {{\rm Ind}}
\def \pubd {{\rm pubd}}
\def \pud {{\rm pud}}
\def \pd {{\rm pd}}
\def \ip {{\rm ip}}
\def \ps {{\rm ps}}
\def \rs {{\rm rs}}
\def \rms {{\rm s}}
\def \cen {{\rm cen}}
\def \bfA {{\bf A}}
\def \bfx {{\bf x}}
\def \top {{\rm top}}
\def \IP {{\rm IP}}
\def \QR {{\rm QR}}
\def \Rec {{\rm Rec}}
\def \rM {{\rm M}}
\def \AP {{\rm AP}}
\def \supp {{\rm supp}}
\def \IE {{\rm IE}}
\def \diam {{\rm diam}}
\begin{document}
\title [family independence] {Family independence for
topological and measurable dynamics}
\author{Wen Huang, Hanfeng Li and Xiangdong Ye}

\address{\hskip-\parindent
Wen Huang, Department of Mathematics, University of Science
and Technology of China, Hefei, Anhui 230026, P.R. China}
\email{wenh@mail.ustc.edu.cn}

\address{\hskip-\parindent
Hanfeng Li, Department of Mathematics, SUNY at Buffalo,
Buffalo NY 14260-2900, U.S.A.}
\email{hfli@math.buffalo.edu}

\address{\hskip-\parindent
Xiangdong Ye, Department of Mathematics, University of Science
and Technology of China, Hefei, Anhui 230026, P.R. China}
\email{yexd@ustc.edu.cn}

\date{August 24, 2010}

\begin{abstract}
For a family $\F$ (a collection of subsets of $\Z_+$), the notion of $\F$-independence is defined both
for topological dynamics (t.d.s.) and measurable dynamics (m.d.s.).
It is shown that there is no non-trivial \{syndetic\}-independent m.d.s.;  a
m.d.s. is \{positive-density\}-independent if and only if it has completely
positive entropy; and a m.d.s. is weakly mixing if and only if it
is \{IP\}-independent.
For a t.d.s. it is proved that there is no non-trivial minimal
\{syndetic\}-independent system; a t.d.s. is weakly mixing if and only
if it is \{IP\}-independent.

Moreover, a non-trivial proximal topological K system is constructed, and a
topological proof of the fact that minimal topological K implies
strong mixing is presented.
\end{abstract}

\subjclass[2000]{Primary: 37B40, 37A35, 37B10, 37A05.}
\keywords{Independence, weak mixing, minimal, K}

\maketitle


\section{Introduction}
By a {\it topological dynamical system} (t.d.s.) $(X,T)$ we mean a
compact metrizable space $X$ together with a surjective continuous map
$T$ from $X$ to itself. For a t.d.s. $(X,T)$ and nonempty open
subsets $U$ and $ V$ of $X$ let $N(U,V)=\{n\in\Z_+: U\cap
T^{-n}V\not=\emptyset\}$, where $\Z_+$ denotes the set of non-negative
integers.
It turns out that many recurrence properties of t.d.s. can be
described using the return times sets $N(U,V)$, see \cite{A, F, G1,
HSY, HY1}. For example, for a t.d.s. $(X,T)$ it is known that $T$ is
(topologically) {\it strongly mixing} iff $N(U,V)$ is cofinite, $T$
is (topologically) {\it weakly mixing} iff $N(U,V)$ is thick
\cite{F} and $T$ is (topologically) {\it mildly mixing} iff $N(U,V)$
is an $(\IP-\IP)^*$ set \cite{HY1, GW}, for each pair of nonempty
open subsets $U$ and $V$. Huang and Ye \cite{HY1} showed that a
minimal system $(X,T)$ is weakly mixing iff the lower Banach density
of $N(U,V)$ is $1$, and $(X,T)$ is mildly mixing iff $N(U,V)$ is an
$\IP^*$ set, for each pair of nonempty open sets $U$ and $V$.

By a {\it measurable dynamical system} (m.d.s.) we mean a quadruple
$(X,{\mathcal{B}},\mu,T)$, where $(X, \B, \mu)$ is a Lebesgue space
(i.e., $X$ is a set, $\mathcal{B}$ is the
$\sigma$-algebra of Borel subsets on $X$ for some Polish topology on $X$,
and $\mu$ is a probability measure on $\B$) and
$T:X\rightarrow X$ is measurable and measure-preserving, that
is: $\mu(B)=\mu (T^{-1}B)$ for each $B\in \mathcal{B}$. For a
t.d.s $(X,T)$, there are always invariant Borel probability measures on
$X$ and thus for each such measure $\mu$, $(X,\mathcal{B}_X,\mu,T)$,
with $\mathcal{B}_X$ the Borel $\sigma$-algebra on $X$, is a m.d.s..
For a m.d.s. $(X, \B, \mu, T)$, let
${\mathcal B}^+=\{B\in {\mathcal B}:\mu(B)>0\}$ and
$N(A,B)=\{n\in{\Bbb Z}_+: \mu(A\cap T^{-n}B)>0\}$ for $A, B\in
\mathcal{B}^+$.  It is known that $T$ is ergodic iff
$N(A,B)\not=\emptyset$ iff $N(A,B)$ is syndetic; $T$ is weakly
mixing iff the lower Banach density of $N(A,B)$ is $1$ iff $N(A,B)$
is thick; and $T$ is mildly mixing iff $N(A,B)$ is an $\IP^*$ set iff
$N(A,B)$ is an $(\IP-\IP)^*$  set for all $A,B\in \mathcal{B}^+$
iff for each $\IP$ set $F$ and $A\in{\mathcal B}^+$,
$\mu(\bigcup_{n\in F}T^{-n}A)=1$. Finally, it is known that $T$ is
intermixing iff $N(A,B)$ is cofinite for all $A,B\in \mathcal{B}^+$,
see \cite{KY, KY1} and references therein.

In ergodic theory there exists a rich and powerful entropy theory.
The analogous notion of topological entropy
was introduced soon after the measure theoretical one, and was
widely studied and applied. Notwithstanding, the level of
development of topological entropy theory lagged behind. In recent
years however this situation is rapidly changing. A turning point
occurred with F. Blanchard's pioneering papers \cite{B1, B2} in
the 1990's.

In recent years a local entropy theory has been developed, see
\cite{GY} for a survey. More precisely, in \cite{B1} Blanchard
introduced the notions of completely positive entropy (c.p.e.) and
uniformly positive entropy (u.p.e.) as topological analogues of the
$K$-property in ergodic theory. In \cite{B2} he defined the notion
of entropy pairs and used it to show that a u.p.e. system is
disjoint from all minimal zero entropy systems. The notion of
entropy pairs can also be used to show the existence of the maximal
zero entropy factor for any t.d.s., namely the topological Pinsker
factor \cite{BL}. Blanchard et al. \cite{BHMMR} also introduced the
notion of entropy pairs for an invariant Borel probability measure.
Glasner and Weiss \cite{GW2} introduced the notion of entropy
tuples. In order to gain a better understanding of the topological
version of a K-system, Huang and Ye \cite{HY2} introduced the notion
of entropy tuples for an invariant Borel probability measure.
They showed that if
$(X,T)$ is a t.d.s. and $k\ge 2$,
then a non-diagonal tuple $(x_1, \dots, x_k)$ in $X^k$ is an
entropy tuple iff for every choice of neighborhoods $U_i$ of $x_i$
there is a subset $F$ of $\Z_+$ with positive
density such that $\bigcap_{i\in F}
T^{-i}U_{s(i)}\not=\emptyset$ for each $s\in
\{1,\ldots,k\}^F$. We mention that at the same time a theory of
sequence entropy tuples and tame systems were developed \cite{G06, Huang06, G07}. It is Kerr
and Li who captured the idea behind the results on entropy tuples,
sequence entropy tuples and tame systems and treated them systematically
using a notion called independence in \cite{KL2, KL3}, which first appeared
in Rosenthal's proof of his groundbreaking $\ell_1$ theorem
\cite{Ros74, Ros78}.

Let $(X,T)$ be a t.d.s.. For a tuple $\bfA=(A_1,\dots,A_k)$ of
subsets of $X$, we say a subset $F\subseteq \Z_+$ is an {\it
independence set} for $\bfA$ if for any nonempty finite subset
$J\subseteq F$, we have $$\bigcap_{j\in
J}T^{-j}A_{s(j)}\not=\emptyset$$ for any $s\in \{1,\dots,k\}^J$. We
call a tuple $\bfx=(x_1,\dots,x_k)\in X^k$ (1) an {\it
IE-tuple} if for every product neighborhood $U_1\times \dots\times
U_k$ of $\bfx$ the tuple $(U_1,\dots,U_k)$ has an independence
set of positive density; (2) an {\it IT-tuple} if for every product
neighborhood $U_1\times \dots\times U_k$ of $\bfx$ the tuple
$(U_1,\dots,U_k)$ has an infinite independence set; (3) an {\it
IN-tuple} if for every product neighborhood $U_1\times \dots\times
U_k$ of $\bfx$ the tuple $(U_1,\ldots,U_k)$ has arbitrarily
long finite independence sets. Kerr and Li \cite{KL2} showed that (1)
entropy tuples are exactly non-diagonal IE-tuples; (2)
sequence entropy tuples are exactly  non-diagonal IN-tuples, and  in
particular a t.d.s. $(X,T)$ is null iff it has no non-diagonal IN-pairs;
(3) a t.d.s. $(X,T)$ is tame iff it has no non-diagonal IT-pairs.
For similar results concerning m.d.s. see \cite{KL3}.

Thus the notion of independence is very useful to describe dynamical
properties. A family is a collection of subsets of the set of nonnegative integers satisfying suitable hereditary property.
For a family $\F$, the notion of $\F$-independence can
be defined both for topological dynamics (t.d.s.) and measurable
dynamics (m.d.s.). For instance, in the topological case, we say that $(X, T)$ is $\F$-independent if every finite tuple of subsets of $X$ with nonempty interiors
has an independence set in $\F$.
So a natural question is: for a given family $\F$
which dynamical property is equivalent to $\F$-independence? In this
paper we try to answer this question.

It is shown that there is no non-trivial \{syndetic\}-independent m.d.s.;  a
m.d.s. is \{positive-density\}-independent iff it has completely positive entropy;
and a m.d.s. is weakly mixing iff it is
\{infinite\}-independent iff  it is \{IP\}-independent.
For a t.d.s. it is proved that there is no non-trivial minimal
\{syndetic\}-independent system; a t.d.s. is weakly mixing iff it is
\{infinite\}-independent iff it is \{IP\}-independent.

Moreover, a non-trivial proximal topological K system (see Definition~\ref{topological K:def} below) is constructed, and a
topological proof (using independence) of the fact that
minimal topological K implies strong mixing is presented. In a
forthcoming paper \cite{HLiY} we will deal with the problem of how to
localize the notion of $\F$-independence.

In \cite{B2} Blanchard raised the question whether there exists
any non-trivial minimal uniformly positive entropy (equivalently,
\{positive-density\}-independent of order $2$ in our terminology)
t.d.s.. This was answered affirmatively by Glasner and Weiss in
\cite{GW1}. Later Huang and Ye showed there are non-trivial
minimal \{positive-density\}-independent t.d.s. \cite{HY2}.
However, the constructions in \cite{GW1} and \cite{HY2} are based
on showing that any minimal topological model of a K-system is
such an example and then using the Jewett-Krieger  theorem to
obtain such a topological model. So far there is no explicit
topological construction of such examples. Since the family of
syndetic sets is  just slightly smaller than the family of
positive upper Banach density
 sets, our result of the non-existence of  non-trivial minimal \{syndetic\}-independent
 t.d.s. explains why it is so difficult to construct examples for Blanchard's question.

The paper is organized as follows. In Section 2 we investigate the
relationship between a given family $\F$ and the associated block
family $b\F$. In Section 3, the basic properties of
$\F$-independence for a t.d.s. are discussed. Particularly we show
that $\F$ and $b\F$ define the same notion of independence. In
Section 4, the basic properties of $\F$-independence for a m.d.s.
are discussed. In Section 5, we investigate classes of
$\F$-independent systems for t.d.s. and show that there is no
non-trivial minimal \{syndetic\}-independent t.d.s.. Moreover, a
non-trivial proximal topological K system is constructed. In
Section 6, we investigate classes of $\F$-independent systems for
m.d.s. and show that a m.d.s. is \{positive-density\}-independent
iff it has completely positive entropy. We also show that there is
no non-trivial \{syndetic\}-independent m.d.s.. In Section 7, we
give a topological proof of the fact that minimal topological K
implies strong mixing. An interesting combinatorial result, which
is needed for the proof of non-existence of no-trivial minimal
\{syndetic\}-independent t.d.s., is established in the Appendix.

Throughout this paper, we use $\Z_+$ and $\N$ to denote the sets of
nonnegative integers and positive integers respectively. For a
subset $F$ of $\Z$ and $m\in \Z$ we denote $\{j+m: j\in F\}$ by
$F+m$. For a subshift $X$ of $\{0, 1, \dots, k\}^{\Z_+}$ or $\{0, 1,
\dots, k\}^{\Z}$ and $a\in \{0, 1, \dots, k\}^{\{1, \dots, m\}}$ for
some $m\in \N$, we denote $\{x\in X: (x(0), x(1), \dots,
x(m-1))=a\}$ by $[a]_X$. For a t.d.s. $(X, T)$ and subsets $U,
V\subseteq X$, we denote by $N(U, V)$ the set $\{n\in \Z_+: U\bigcap
T^{-n}V\neq \emptyset\}$; for $x\in X$ we shall write $N(x, U)$ for
$N(\{x\}, U)$. For a m.d.s. $(X, \B, \mu, T)$ and $A, B\in \B$, we
denote by $N(A, B)$ the set $\{n\in \Z_+: \mu(A\bigcap
T^{-n}B)>0\}$.

\noindent{\it Acknowledgements.} W.H. and X.Y. are partially
supported by grants from NNSF of China (10531010, 11071231) and  973
Project (2006CB805903). W.H  is partially supported by NNSF of China
(10911120388), Fok Ying Tung Education Foundation, FANEDD (Grant
200520) and the Fundamental Research Funds for the Central
Universities. H.L. is partially supported by NSF grant DMS-0701414.
Part of this work was carried out during visits of H.L. to W.H. and
X.Y. in the summers of 2008 and 2009. H.L. is grateful to them for
their warm hospitality. We thank the referees for very helpful
comments.

\section{Preliminary}

The idea of using families to describe dynamical properties goes
back at least to Gottschalk and Hedlund \cite{GH}. It was developed
further by Furstenberg
\cite{F, HF}. For a systematic study and recent
results, see \cite{A, G1, HSY, HY1}.

Let us recall some notations related to a family (for details see
\cite{A}). Let $\P=\P({\Z}_{+})$ be the collection of all subsets of
$\Z_+$. A subset $\F$ of $\P$ is a {\it family}, if it is hereditary
upward. That is, $F_1 \subseteq F_2$ and $F_1 \in \F$ imply $F_2 \in
\F$. A family $\F$ is {\it proper} if it is a proper subset of $\P$,
i.e. neither empty nor all of $\P$. It is easy to see that $\F$ is
proper if and only if ${\Z}_{+} \in \F$ and $\emptyset \notin \F$.
Any subset $\mathcal{A}$ of $\P$ generates a family
$[\mathcal{A}]=\{F \in \P:F \supseteq A$ for some $A \in
\mathcal{A}\}$. If a proper family $\F$ is closed under taking finite
intersection, then $\F$ is called a {\it filter}. For a family $\F$,
the {\it dual family} is
$$\F^*=\{F\in\P: {\Z}_{+} \setminus F\notin\F\}=\{F\in \P:F \cap F' \neq
\emptyset \text{ for  all } F' \in \F \}.$$ $\F^*$ is a family, proper
if $\F$ is. Clearly,
$$(\F^*)^*=\F\ \text{and}\ {\F}_1\subseteq {\F}_2 \Longrightarrow
{\F}_2^* \subseteq {\F}_1^*.$$

There is an important property being well studied: the Ramsey
property. We say that a family $\F$ has the {\it Ramsey property} if
whenever $F_1\cup F_2\in\F$, one has either $F_1\in\F$ or $F_2\in\F$.
One can show that a proper family $\F$ has the Ramsey property if and only if $\F^*$ is a
filter \cite[page 26]{A}.

Denote by $\F_{\inf}$ the family of all infinite subsets of ${\Z}_{+}$
and by $\F_{\rm c}$ the dual family $\F^*_{\inf}$.
 Note that $\F_{\rm c}$ is the collection
of all cofinite subsets of $\Z_+$.
{\bf All the families considered in this paper are assumed to be proper and contained
in $\F_{\inf}$}.

Let $F$ be a subset of $\Z_+$. The {\it lower density} and {\it
upper density} of $F$ are defined by
$$\underline{d}(F)=\liminf_{n\to +\infty} \frac{1}{n}|F\cap \{0,1,\ldots,n-1\}|\
\text{and}\ \overline{d}(F)=\limsup_{n\to +\infty} \frac{1}{n}|F\cap
\{0,1,\ldots,n-1\}|.$$ If $\underline{d}(F)=\overline{d}(F)=d(F)$,
we then say that the {\it density} of $F$ is $d(F)$. The {\it upper
Banach density}
of $F$ is defined by
$$BD^*(F)=\limsup_{|I|\to +\infty}\frac{|S\cap
I|}{|I|},$$
where $I$ is taken over all nonempty finite intervals of
$\Z_+$.

We denote by $\F_{\pd}$ the family generated by
sets with positive density,
by $\F_{\pud}$ the family of sets
with positive upper density,
and by $\F_{\pubd}$ the family of sets
with positive upper Banach density.


Note that a subset $F$ of $\Z_+$ is said to be {\it thick} if for any $n\in
\N$ there exists some $m\in \Z_+$ such that
$\{m,m+1,\dots,m+n\}\subseteq F$. An infinite subset $F=\{s_1<s_2<\cdots\}$ of $\Z_+$ is said to
be {\it syndetic} if $\{s_{n+1}-s_n:n\in \N$\} is bounded.
A subset of $\Z_+$ is called {\it piecewise syndetic} if it is the intersection
of a thick set and a syndetic set.
We denote by
$\F_{\rm t}$,
$\F_{\rms}$ and $\F_{\ps}$  the families of
thick sets,
syndetic sets and piecewise syndetic sets respectively.

A subset $F$ of $\Z_+$ is called a {\it central set} if
there exists a t.d.s. $(X, T)$, a point $x\in X$, a minimal point
$y\in X$ which is proximal to $x$ and a neighborhood $U_y$ of $y$
such that $F\supseteq N(x, U_y)$ \cite[Section 8.3]{HF}. Here $y$ is proximal to $x$ means that
for a compatible metric $d$ of $X$, one has $\inf_{n\in \Z_+}d(T^nx, T^ny)=0$.
We denote by $\F_{\cen}$ the family of all central
sets.

A subset $F$ of $\Z_+$ is called an $\IP$-set if there exists a
sequence $\{a_n\}_{n\in \N}$ in $\N$ such that $F$ consists of
$a_{n_1}+a_{n_2}+\dots+ a_{n_k}$ for all $k\in \N$ and $n_1<n_2<\dots
<n_k$. We denote by $\F_{\ip}$ the family generated by all
$\IP$-sets.

\begin{de} \label{block:def}
Let $\F$ be a family. The {\it block family} of $\F$, denoted by
$b\F$, is the family consisting of sets $S\subseteq \Z_+$ for which
there exists some $F\in \F$ such that for every finite subset $W$ of
$F$ one has $m+W\subseteq S$ for some $m\in \Z$.
\end{de}

Clearly $\F\subseteq b\F$ and
$b(b\F)=b\F$. It is also clear that $b\F_{\inf}=\F_{\inf}$ and $b\F_{\rm c}=\F_{\rm t}$.


\begin{ex} \label{bF:ex}
It is clear that $b\F_{\pd}\subseteq b\F_{\pud}\subseteq \F_{\pubd}$. It is a result of
Ellis that $\F_{\pubd}\subseteq b\F_{\pd}$ \cite[Theorem 3.20]{HF} (one can also give
a topological proof for this, using an argument similar to that in the proof of
Lemma~\ref{independence density:lemma}). Thus one has $b\F_{\pd}=b\F_{\pud}=\F_{\pubd}$.
\end{ex}

\begin{ex} \label{s vs ps:ex}
It is clear that $b\F_{\rms}\subseteq \F_{\ps}$. Let $S_1\in \F_{\rm
t}$ and $S_2\in \F_{\rms}$. Then for each $n\in \N$ we can find some
$a_n\in \Z_+$ with $[a_n, a_n+n]\subseteq S_1$. Some subsequence of
the sequence $\{1_{([a_n, a_n+n]\cap S_2)-a_n}\}_{n\in \N}$
converges in $\{0, 1\}^{\Z_+}$ to $1_F$ for some subset $F$ of
$\Z_+$. It is easy to see that $F$ is syndetic and that for every
finite subset $W$ of $F$ one has $m+W\subseteq S_1\cap S_2$ for some
$m\in \Z_+$. Therefore $b\F_{\rms}\supseteq \F_{\ps}$, and hence
$b\F_{\rms}=\F_{\ps}$.
\end{ex}

\begin{ex} \label{central vs ps:ex}
It is clear that $\F_{\cen}\subseteq \F_{\ps}$ and hence
$b\F_{\cen}\subseteq b\F_{\ps}=\F_{\ps}$. Let $S\in \F_{\ps}$.
Denote by $X$ the smallest closed shift-invariant subset of $\{0, 1\}^{\Z}$
containing $1_S$.
Note that $S=N(1_S, [1]_X)$.
By \cite[Theorem 6]{BF} there is a minimal point $x$ of $X$ contained in $[1]_X$.
Say, $x=1_{S'}$. Set $F=S'\cap \Z_+$. Then $F=N(x, [1]_X)$ is central.
Since $x$ is in $X$, it is easy to see that for every
finite subset $W$ of $F$ one has $m+W\subseteq S$ for some
$m\in \Z$. This means that $S\in b\F_{\cen}$. Therefore $b\F_{\cen}\supseteq \F_{\ps}$, and hence
$b\F_{\cen}=\F_{\ps}$.
\end{ex}

The following result shows the relation between the block family and
the broken family introduced in \cite[Defintion 2]{BF}.

\begin{prop} \label{block family:prop}
Let $\F$ be a family. Let $S\subseteq \Z_+$. Then $S\in b\F$ if and
only if there exist an $F=\{p_1<p_2< \dots\}\in \F$
and a (not necessarily strictly) increasing sequence
$\{b_j\}_{j=1}^{\infty}$ of integers such that $S\supseteq
\bigcup_{j=1}^\infty\{b_j+\{p_1,p_2,\dots, p_j\}\}$.
\end{prop}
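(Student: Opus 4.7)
The plan is to verify the two implications by direct translation, each reducing to the defining property of $b\F$ stated in Definition~\ref{block:def}.

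For the (if) direction, suppose we are given $F=\{p_1<p_2<\dots\}\in\F$ and an increasing sequence $\{b_j\}_{j=1}^{\infty}$ with $b_j+\{p_1,\dots,p_j\}\subseteq S$ for every $j$. Any finite subset $W\subseteq F$ has a largest element, say $p_j$, so $W\subseteq\{p_1,\dots,p_j\}$ and hence $b_j+W\subseteq b_j+\{p_1,\dots,p_j\}\subseteq S$. Taking $m=b_j\in\Z$ provides exactly the shift required, so $S\in b\F$.

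For the (only if) direction, pick $F=\{p_1<p_2<\dots\}\in\F$ witnessing $S\in b\F$, and for each $j\ge 1$ set
\[
M_j=\{m\in\Z:m+\{p_1,\dots,p_j\}\subseteq S\}.
\]
By the definition of $b\F$ each $M_j$ is nonempty, and the inclusions $\{p_1,\dots,p_j\}\subseteq\{p_1,\dots,p_{j+1}\}$ give $M_1\supseteq M_2\supseteq\cdots$. The key observation is that each $M_j$ is bounded below: any $m\in M_j$ satisfies $m+p_1\in S\subseteq\Z_+$, hence $m\ge -p_1$. Therefore $b_j:=\min M_j$ exists in $\Z$ and belongs to $M_j$, and the nesting $M_{j+1}\subseteq M_j$ forces $b_j\le b_{j+1}$. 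The resulting $\{b_j\}_{j=1}^{\infty}$ is the desired weakly increasing sequence of integers satisfying $S\supseteq\bigcup_{j=1}^{\infty}\{b_j+\{p_1,\dots,p_j\}\}$.

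No real obstacle arises in either direction. The only point deserving a moment's thought is that, in the (only if) direction, the shifts $b_j$ must be selected \emph{coherently} across $j$ to form an increasing sequence rather than picked independently as arbitrary witnesses; the minimum-selection trick $b_j=\min M_j$ handles this by exploiting the nesting $M_{j+1}\subseteq M_j$ together with the uniform lower bound $-p_1$ that follows from $S\subseteq\Z_+$.
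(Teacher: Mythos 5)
Your proof is correct and takes essentially the same route as the paper's: the ``if'' direction is the same trivial verification, and the ``only if'' direction rests on the same key observation that any shift $m$ witnessing $m+\{p_1,\dots,p_j\}\subseteq S$ satisfies $m\ge -p_1$ because $S\subseteq\Z_+$. The only difference is cosmetic: the paper extracts a weakly increasing subsequence from arbitrarily chosen witnesses $b_j$ and re-indexes using $\{p_1,\dots,p_k\}\subseteq\{p_1,\dots,p_{j_k}\}$, whereas you take $b_j=\min M_j$, which yields monotonicity directly from the nesting $M_{j+1}\subseteq M_j$.
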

\begin{proof} The ``if" part is trivial.

Suppose that $S\in b\F$.
Let $F=\{p_1<p_2< \dots\}\in \F$ witnessing this. Then for each $j\in \N$
we find some $b_j\in \Z$ with $b_j+\{p_1, \dots, p_j\}\subseteq S$.
Note that $b_j+p_1\ge 0$ for every $j\in \N$. Thus we can find an
increasing subsequence $\{b_{j_k}\}^{\infty}_{k=1}$ of
$\{b_j\}^{\infty}_{j=1}$. Then for each $k\in \N$ we have $b_{j_k}+\{p_1,
\dots, p_k\}\subseteq b_{j_k}+\{p_1, \dots, p_{j_k}\}\subseteq S$. Thus
$S\supseteq \bigcup_{k=1}^\infty\{b_{j_k}+\{p_1,p_2,\cdots, p_k\}\}$.
This proves the ``only if" part.
\end{proof}

The next result follows from Proposition~\ref{same indep:prop}
and Lemma~\ref{dynamical Ramsey to Ramsey:lemma}, which we shall
prove in the next section.

\begin{prop}\label{block0}
If $\F$ has the Ramsey property, then so does $b\F$.
\end{prop}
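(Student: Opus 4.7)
The plan is to give a direct combinatorial argument; the authors instead route through Proposition~\ref{same indep:prop} (which identifies $\F$-independence with $b\F$-independence) and Lemma~\ref{dynamical Ramsey to Ramsey:lemma}, but the underlying content is a diagonal compactness/Ramsey argument that can be carried out by hand.

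Suppose $F_1\cup F_2\in b\F$; replacing $F_2$ by $F_2\setminus F_1$, I may assume $F_1\cap F_2=\emptyset$. By the definition of $b\F$, there exists $F=\{p_1<p_2<\cdots\}\in\F$ such that for every finite $W\subseteq F$ some integer shift of $W$ lies in $F_1\cup F_2$. Applying this to each initial segment $W_j:=\{p_1,\dots,p_j\}$ yields integers $m_j$ with $m_j+W_j\subseteq F_1\cup F_2$, and since $F_1\cap F_2=\emptyset$ each $j$ induces a $2$-coloring $c_j\colon W_j\to\{1,2\}$ defined by $c_j(p_i)=\ell$ precisely when $m_j+p_i\in F_\ell$.

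A standard diagonal extraction on $\{1,2\}^{\N}$ now produces a subsequence $j_1<j_2<\cdots$ along which, for each fixed $i$, the value $c_{j_k}(p_i)$ stabilizes once $j_k\ge i$ to some $c(p_i)\in\{1,2\}$. Set $G_\ell:=\{p_i\in F:c(p_i)=\ell\}$; then $F=G_1\sqcup G_2$, and since $F\in\F$ and $\F$ has the Ramsey property, one of the $G_\ell$, say $G_1$, belongs to $\F$.

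It remains to check that $G_1$ witnesses $F_1\in b\F$. Given a finite $W\subseteq G_1$, choose $k$ so large that $W\subseteq W_{j_k}$ and $c_{j_k}(p)=c(p)=1$ for every $p\in W$; then $m_{j_k}+W\subseteq F_1$ by construction. Hence $F_1\in b\F$ (if instead $G_2\in\F$ one concludes $F_2\in b\F$), which proves that $b\F$ has the Ramsey property. The only real difficulty is the bookkeeping in the diagonal step; conceptually, the shifts $m_j$ supply consistent $2$-colorings of longer and longer initial segments of $F$, and compactness upgrades these to a single $2$-coloring of all of $F$ to which the Ramsey property of $\F$ can be applied.
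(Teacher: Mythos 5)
Your argument is correct and complete, but it takes a genuinely different (more self-contained) route than the paper. The paper derives Proposition~\ref{block0} by combining Proposition~\ref{same indep:prop} with Lemma~\ref{dynamical Ramsey to Ramsey:lemma}: the Ramsey property of $\F$ yields a splitting statement about independence sets of closed sets $Y=Y_1\cup Y_2$ in an arbitrary t.d.s.; since $\F$ and $b\F$ have nonempty intersection with exactly the same collections $\Ind(\cdot)$, that splitting statement transfers to $b\F$; and because $b(b\F)=b\F$, the converse direction of Lemma~\ref{dynamical Ramsey to Ramsey:lemma} (which encodes a partition $F=F_1\sqcup F_2$ into the orbit closure of $1_{F_1}+2\cdot 1_{F_2}$ in $\{0,1,2\}^{\Z}$) converts it back into the Ramsey property for $b\F$. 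Your proof unwinds that dynamical detour into a bare-hands compactness argument: the shifts $m_j$ give coherent $2$-colorings of longer and longer initial segments of the witness $F\in\F$, a diagonal extraction produces a single coloring $F=G_1\sqcup G_2$, the Ramsey property of $\F$ picks out $G_\ell\in\F$, and the stabilized colorings show $G_\ell$ witnesses $F_\ell\in b\F$. The two small reductions you make (passing to $F_2\setminus F_1$, which is harmless by upward heredity of $b\F$, and noting the colorings are well defined by disjointness) are exactly right. What the paper's route buys is reuse of machinery it needs anyway for Theorem~\ref{ye3} and Proposition~\ref{ye4}; what your route buys is a proof readable without any dynamics, and it makes transparent that the only inputs are compactness of $\{1,2\}^{F}$ and the Ramsey property of $\F$ itself.
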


We remark that if $b\F$ has the Ramsey property, it is not
necessarily true that $\F$ has the Ramsey property. For example,
$\F_{\pud}$ and $\F_{\pubd}$ have the Ramsey property, while $\F_{\pd}$ does not.

\medskip
For the readers' convenience we make the following table. All the
definitions of the families can be found in this section except
$\F_{\rm ss}$ and $\F_{\rm rs}$ which can be found in Section
\ref{c7} and Section \ref{c3} respectively.

\begin{table}[!h]
\caption{Notions for families} \vspace*{1.5pt}
\begin{center}
\begin{tabular}{| p{2.5cm} | p{1.8cm} | p{2.6cm} |p{2.0cm}|p{1.8cm}|p{2.4cm}| }\hline
\multicolumn{1}{|c|} {$\F_{\rm ss}$}  & \multicolumn{1}{|c|}
{$\F_{\inf}$} & \multicolumn{1}{|c|} {$\F_{\pubd}$} &
\multicolumn{1}{|c|} {$\F_{\ps}$} & \multicolumn{1}{|c|}
{$\F_{\rms}$} & \multicolumn{1}{|c|}
{$\F_{\pd}$} \\
\hline
   Section \ref{c7} & all infinite sets & all positive upper Banach
  density sets
& all piecewise syndetic sets & all syndetic sets & generated by all positive density sets\\
\hline \ \ \ \ \ \ $\F_{\rs}$   & \ \ \ \ \ \ $\F_{\rm c}$ & \ \ \ \
\ \ $\F_{\ip}$
& \ \ \ \ \ \ $\F_{\cen}$ & \ \ \ \ \ \ $\F_{\rm t}$ & \ \ \ \ \ \ $\F_{\pud}$\\
\hline  generated by $\{n\Z_+\!:n\in\!\N\}$ &all cofinite sets &
generated by all IP-sets
& all central sets & all thick sets & all positive upper density sets\\
\hline
 \end{tabular}
\end{center}
\end{table}

\section{Independence: topological case}\label{c3}

In this section, for a given family $\F$, we define
$\F$-independence for t.d.s., and discuss $1$-independence
for various families. Recall first the notion of
independence set introduced in \cite[Definition 2.1]{KL2}.

\begin{de}\label{indede}
Lt $(X,T)$ be a t.d.s..
For a tuple
$\bfA=(A_1,\ldots,A_k)$ of subsets of $X$, we say that a subset
$F\subseteq \Z_+$ is an {\it independence set} for $\bfA$ if for
any nonempty finite subset $J\subseteq F$, we have $$\bigcap_{j\in
J}T^{-j}A_{s(j)}\not=\emptyset$$ for any $s\in \{1,\dots,k\}^J$.
\end{de}

We shall denote the collection of all independence sets for $\bfA$ by
$\Ind(A_1,\ldots, A_k)$ or $\Ind \bfA$. The basic properties of independence sets are listed below.

\begin{lem}\label{ye1}
The following hold:
\begin{enumerate}
\item If $F\in \Ind(A_1, \dots, A_k)$ and $F_1\subseteq F$, then $F_1\in \Ind(A_1, \dots, A_k)$.

\item $F=\{a_1,a_2,\ldots\}$ is in $\Ind(A_1, \dots, A_k)$
if and only if $\{a_1,\ldots,a_n\}$ is in $\Ind(A_1, \dots, A_k)$ for
each $n\in \N$.

\item If $m\in \Z$ and $F, m+F\subseteq \Z_+$, then $F$ is in $\Ind(A_1, \dots, A_k)$
if and only if $m+F$ is so.

\item Let $F\subseteq \Z_+$ and $X$ be the subshift of
$\{0, 1\}^{\Z}$ generated by $\{1_E:E\subseteq F\}$. Then $F\in \Ind([0]_X, [1]_X)$.
\end{enumerate}
\end{lem}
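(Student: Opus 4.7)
The plan is to handle each of the four clauses in turn; (1) and (2) are essentially unfolding of the definition, (3) rests on the surjectivity of $T$, and (4) is an explicit point construction inside the subshift.

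For (1), any nonempty finite $J\subseteq F_1$ is also a nonempty finite subset of $F$, so the defining intersection condition is inherited immediately. For (2), the forward direction is a special case of (1), and the converse is the observation that any nonempty finite $J\subseteq F$ is contained in some initial segment $\{a_1,\dots,a_n\}$; the intersection condition for $J$ then follows from the one for that initial segment by (1).

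For (3), the key identity is that for $m\ge 0$ preimage commutes with iteration, so for any finite $J\subseteq F$ and any $s\in\{1,\ldots,k\}^J$,
\[
\bigcap_{j\in J}T^{-(j+m)}A_{s(j)} \;=\; T^{-m}\Bigl(\bigcap_{j\in J}T^{-j}A_{s(j)}\Bigr).
\]
Since $T$ is surjective, $T^m$ is surjective, hence $T^{-m}$ maps nonempty sets to nonempty sets and empty to empty; therefore nonemptiness of the left side is equivalent to nonemptiness of the right side. Translating back, this shows that the independence condition for $F$ at the selector $s$ matches that for $m+F$ at the shifted selector $s'(j+m)=s(j)$. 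The case $m<0$ is handled by interchanging the roles of $F$ and $m+F$ and applying the $m\ge 0$ case to $F'=m+F$ with shift $-m>0$.

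For (4), given a nonempty finite $J\subseteq F$ and $s\in\{1,2\}^J$ with $A_1=[0]_X$, $A_2=[1]_X$, set $E=\{j\in J: s(j)=2\}\subseteq F$. Then $1_E\in\{1_{E'}: E'\subseteq F\}\subseteq X$, and by construction the $j$-th coordinate of $1_E$ equals $1$ exactly when $s(j)=2$ and $0$ when $s(j)=1$, for each $j\in J$. Hence $\sigma^{j}(1_E)\in A_{s(j)}$ for all $j\in J$, which gives $1_E\in\bigcap_{j\in J}\sigma^{-j}A_{s(j)}$, as required. The main point to verify is just that $1_E$ lies in the generating set before taking closure and orbit, which it does since $E\subseteq J\subseteq F$. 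None of the four clauses presents a real obstacle; the only mild subtlety is the surjectivity bookkeeping in (3), where one must remember that without surjectivity of $T$ the backward preimage could spuriously vanish.
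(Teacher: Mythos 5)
Your proof is correct. The paper states Lemma 3.2 without proof (it is offered as a list of basic properties), and your verification is exactly the routine unfolding the authors intend: (1)--(2) from the definition, (3) via $\bigcap_{j\in J}T^{-(j+m)}A_{s(j)}=T^{-m}\bigl(\bigcap_{j\in J}T^{-j}A_{s(j)}\bigr)$ together with surjectivity of $T$ (which the paper builds into the definition of a t.d.s.), and (4) by exhibiting $1_E$ with $E=\{j\in J: s(j)=2\}$ as an explicit point of the required intersection.
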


\begin{de} \label{dynamical Ramsey property:def}
Let $\F$ be a family. We say that $\F$ has the {\it dynamical
Ramsey property}, if for any t.d.s. $(X,T)$, any $k\in \N$ and
closed subsets $A_1,A_2, \ldots, A_k, A_{1,1}, A_{1,2}$ of $X$ with
$A_1=A_{1,1}\cup A_{1,2}$, whenever $\Ind(A_1, A_2, \ldots, A_k)\cap
\F\not=\emptyset$, one has either $\Ind(A_{1,1}, A_2, \ldots, A_k)\cap
\F\not=\emptyset$ or $\Ind(A_{1,2}, A_2, \ldots, A_k)  \cap
\F\not=\emptyset$.
\end{de}

It was shown in \cite[Lemmas 3.8 and 6.3]{KL2} that the families $\F_{\pd}$ and $\F_{\inf}$ have the
dynamical Ramsey property.

Similar to the definition of u.p.e. of order $n$
(see \cite{HY2}),  we have

\begin{de} \label{tuple:def}
Let $\F$ be a family, $k\in\N$ and $(X,T)$ be a t.d.s..
A tuple $(x_1,\ldots,x_k)\in X^k$ is called an {\it $\F$-independent tuple}
if for any neighborhoods $U_1,\ldots,U_k$ of $x_1,\ldots,x_k$ respectively,
one has $\Ind(U_1,\ldots,U_k)\cap\F\not=\emptyset$. A t.d.s. is said to be
{\it $\F$-independent of order $k$}, if for each tuple of nonempty
open subsets $U_1,\ldots,U_k$,
$\Ind(U_1,\ldots,U_k)\cap\F\not=\emptyset$, and a t.d.s. is said to be
{\it $\F$-independent}, if it is {\it $\F$-independent} of order $k$
for each $k\in\N$.
\end{de}

Standard arguments as in \cite{B2} show the following:

\begin{prop} \label{dynamical Ramsey to local:prop}
Let $\F$ be a family with the dynamical Ramsey property, and let $(X, T)$ be a t.d.s..
The following are true:
\begin{enumerate}
\item If $\bfA=(A_1, \dots, A_k)$ is a tuple of closed subsets of $X$ with
$\Ind \bfA\cap \F\neq \emptyset$, then there exists $x_j\in A_j$ for
each $1\le j\le k$ such that $(x_1, \dots, x_k)$ is an
$\F$-independent tuple.

\item Let $k\in \N$. Then the set of  $\F$-independent $k$-tuples of $X$
is a closed $T\times \dots\times T$-invariant subset of $X^k$.

\item Let $(Y, S)$ be a t.d.s. and $\pi: X\rightarrow Y$ be a factor map,
i.e., $\pi$ is continuous surjective and equivariant. Let $k\in \N$.
Then $\pi\times \dots \times \pi$ maps the set of  $\F$-independent
$k$-tuples of $X$ onto the set of $\F$-independent $k$-tuples of
$Y$.
\end{enumerate}
\end{prop}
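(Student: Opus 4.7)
The plan is to mimic the standard argument from \cite{B2}: the dynamical Ramsey property supplies the combinatorial input needed to shrink large closed sets to points while preserving independence, playing the role that subadditivity of entropy plays in Blanchard's construction of entropy pairs.

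For (1), I first extend the dynamical Ramsey property from binary covers to arbitrary finite closed covers by a trivial induction on the size of the cover. Then I shrink coordinate by coordinate. Starting with $A_1$, at each stage $n$ I cover the current $A_1^{(n-1)}$ by finitely many closed subsets of diameter less than $1/n$ and apply (iterated) dynamical Ramsey to select one piece, $A_1^{(n)}$, for which $\Ind(A_1^{(n)},A_2,\ldots,A_k)\cap\F\neq\emptyset$ still holds; the nested intersection is a single point $x_1\in A_1$. When I pass to the second coordinate, I need $\Ind(A_1^{(m)},A_2^{(n)},A_3,\ldots,A_k)\cap\F\neq\emptyset$ simultaneously for all $m$. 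This is achieved by pigeonhole: covering $A_2^{(n-1)}$ by finitely many closed pieces, some single piece works for infinitely many $m$, and by the monotonicity of $\Ind$ in each coordinate (combined with the fact that $A_1^{(m)}$ is decreasing in $m$), it automatically works for every $m$. Iterating through all coordinates yields points $x_j\in A_j$ with the required property, because any neighborhood $U_j$ of $x_j$ eventually contains $A_j^{(n)}$, and monotonicity of $\Ind$ in each coordinate does the rest.

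For (2), closedness is immediate: if $(x_1^n,\ldots,x_k^n)\to(x_1,\ldots,x_k)$ with each term $\F$-independent, any neighborhood $U_j$ of $x_j$ is a neighborhood of $x_j^n$ for large $n$, so $\Ind(U_1,\ldots,U_k)\cap\F\neq\emptyset$. For $T\times\cdots\times T$-invariance, given neighborhoods $V_j$ of $Tx_j$, the preimages $T^{-1}V_j$ are neighborhoods of $x_j$, so I can pick $F\in\Ind(T^{-1}V_1,\ldots,T^{-1}V_k)\cap\F$. The observation that seals the argument is that the same $F$ lies in $\Ind(V_1,\ldots,V_k)$: for any finite $J\subseteq F$ and any $s\colon J\to\{1,\ldots,k\}$, a point in the nonempty set $\bigcap_{j\in J}T^{-(j+1)}V_{s(j)}$ maps under $T$ to a point in $\bigcap_{j\in J}T^{-j}V_{s(j)}$, witnessing its nonemptyness.

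For (3), one direction is analogous: given $(x_1,\ldots,x_k)$ $\F$-independent in $X$ and neighborhoods $V_j$ of $\pi(x_j)$ in $Y$, any $F\in\Ind(\pi^{-1}V_1,\ldots,\pi^{-1}V_k)\cap\F$ for the system $X$ also lies in $\Ind(V_1,\ldots,V_k)\cap\F$ for the system $Y$, now applying $\pi$ (rather than $T$) to a witness point. For the surjectivity direction, given an $\F$-independent tuple $(y_1,\ldots,y_k)$ in $Y$, set $A_j=\pi^{-1}(y_j)$ and pick a neighborhood basis $V_j^n$ of $y_j$ with diameters tending to $0$. A direct computation using $\pi^{-1}\bigl(\bigcap S^{-j}V_{s(j)}^n\bigr)=\bigcap T^{-j}\pi^{-1}V_{s(j)}^n$ and surjectivity of $\pi$ shows $\Ind(\overline{\pi^{-1}V_1^n},\ldots,\overline{\pi^{-1}V_k^n})\cap\F\neq\emptyset$, so by part (1) there is an $\F$-independent tuple $(x_1^n,\ldots,x_k^n)\in\overline{\pi^{-1}V_1^n}\times\cdots\times\overline{\pi^{-1}V_k^n}$. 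By compactness of $X^k$, a subsequence converges, and by (2) the limit $(x_1,\ldots,x_k)$ is $\F$-independent with $x_j\in\bigcap_n\overline{\pi^{-1}V_j^n}\subseteq\pi^{-1}(y_j)$, as required. The main obstacle is the pigeonhole and monotonicity bookkeeping in part (1): making sure that when one coordinate is being shrunk, the independence property against all previously chosen approximations of earlier coordinates is preserved at every stage simultaneously.
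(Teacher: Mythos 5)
Your proof is correct and fills in precisely the ``standard arguments as in \cite{B2}'' that the paper invokes without giving details: iterated dynamical Ramsey to shrink closed sets, monotonicity of $\Ind$ in each coordinate, and compactness. The only stylistic remark is that the pigeonhole bookkeeping in (1) can be avoided entirely by refining all $k$ coordinates in turn at each stage $n$ (each to diameter $<1/n$) before passing to stage $n+1$, so that one works with a single nested sequence of $k$-tuples $(A_1^{(n)},\dots,A_k^{(n)})$ with $\Ind(A_1^{(n)},\dots,A_k^{(n)})\cap\F\neq\emptyset$; but your sequential version, with the monotonicity-plus-pigeonhole fix correctly identified and justified, is also valid.
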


Recall that two t.d.s. $(X, T)$ and $(Y, S)$ are said to be {\it
disjoint} \cite{F} if $X\times Y$ is the only nonempty closed subset
$Z$ of $X\times Y$ satisfying $(T\times S)(Z)=Z$ and projecting
surjectively to $X$ and $Y$ under the natural projections $X\times Y
\rightarrow X$ and $X\times Y \rightarrow Y$ respectively. Following
the arguments in the proofs of \cite[Proposition 6]{B2} and
\cite[Theorem 2.1]{BL}  we have

\begin{thm} \label{disjoint:thm}
Let $\F$ be a family with the dynamical Ramsey property. The following are true:
\begin{enumerate}
\item Each t.d.s. which is $\F$-independent of order $2$ is disjoint from every minimal
system without non-diagonal $\F$-independent pairs.

\item Each t.d.s. admits a maximal factor with no non-diagonal $\F$-independent
pairs.
\end{enumerate}
\end{thm}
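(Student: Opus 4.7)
The approach for both parts leverages Proposition~\ref{dynamical Ramsey to local:prop} in an essential way, in particular the closedness and $T\times\cdots\times T$-invariance of the set of $\F$-independent tuples and their preservation (both lifting and pushing) under factor maps.

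For (1), let $(X,T)$ be $\F$-independent of order $2$ and $(Y,S)$ minimal without non-diagonal $\F$-independent pairs. The plan is to prove that any joining $J\subseteq X\times Y$ equals $X\times Y$. The projection $\pi_X\colon J\to X$ is a factor map, and since $(X,T)$ is $\F$-independent of order $2$ every pair in $X\times X$ is $\F$-independent. By Proposition~\ref{dynamical Ramsey to local:prop}(3), for every $(x_1,x_2)\in X\times X$ there is an $\F$-independent pair $((x_1,y_1),(x_2,y_2))\in J\times J$. Pushing forward by $\pi_Y$ and applying Proposition~\ref{dynamical Ramsey to local:prop}(3) once more produces an $\F$-independent pair $(y_1,y_2)$ in $Y$, which by hypothesis must be diagonal. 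Hence for every $(x_1,x_2)\in X\times X$ the fibers $J_{x_1}=\{y:(x_1,y)\in J\}$ and $J_{x_2}$ meet. To finish, take a minimal subsystem $M\subseteq J$; minimality of $Y$ forces $\pi_Y(M)=Y$, and the lifting mechanism above, together with $T\times S$-invariance and closedness of $J$, should be pushed along orbit closures in $M$ to conclude that $\pi_X(M)=X$ and ultimately $M=X\times Y$, hence $J=X\times Y$, following Blanchard's argument from \cite{B2}.

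For (2), let $R\subseteq X\times X$ be the set of $\F$-independent pairs. By Proposition~\ref{dynamical Ramsey to local:prop}(2), $R$ is closed and $T\times T$-invariant, and it is clearly reflexive and symmetric. The plan is to establish transitivity of $R$ so that $R$ is a closed invariant equivalence relation, and then take $X/R$ as the desired factor. Given $(x_1,x_2),(x_2,x_3)\in R$ with pairwise distinct coordinates and open neighborhoods $U_1$ of $x_1$ and $U_3$ of $x_3$, choose a neighborhood $U_2$ of $x_2$ disjoint from $U_1\cup U_3$. The $\F$-independence of $(x_1,x_2)$ and $(x_2,x_3)$ supplies $F_{12}\in\Ind(U_1,U_2)\cap\F$ and $F_{23}\in\Ind(U_2,U_3)\cap\F$; applying the dynamical Ramsey property—splitting a suitable closed set containing $x_2$ into two pieces and iterating—yields a single $F\in\Ind(U_1,U_3)\cap\F$. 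Once $R\cup\Delta_X=R$ is known to be an equivalence relation, Proposition~\ref{dynamical Ramsey to local:prop}(3) ensures that the quotient factor $X/R$ has no non-diagonal $\F$-independent pairs, since any such pair in $X/R$ would lift to one in $X$ lying in $R$, and so be collapsed by the quotient map. Maximality is then automatic: any factor $Z$ of $X$ without non-diagonal $\F$-independent pairs must have its factor map send $R$ into $\Delta_Z$, and hence factor through $X/R$.

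The main obstacles: In (1), the delicate part is to extract $J=X\times Y$ from the pairwise intersection of fibers. Unlike higher-order $\F$-independence, order $2$ does not immediately give $n$-wise fiber intersections, so I expect to rely on a minimality-plus-orbit-closure argument inside a minimal subsystem of the joining, as in Blanchard's original proof. In (2), the delicate part is the transitivity step for $R$: correctly packaging the two independence sets supplied by $(x_1,x_2)$ and $(x_2,x_3)$ into one independence set for $(x_1,x_3)$. This is precisely where the dynamical Ramsey property is used—via a careful choice of closed sets whose Ramsey splitting turns an independence set for $(U_1,U_2\cup U_3)$ or a related tuple into one for $(U_1,U_3)$.
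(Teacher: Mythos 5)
The paper never writes out a proof of this theorem; it defers to Blanchard \cite{B2} for part (1) and to Blanchard--Lacroix \cite{BL} for part (2), so your proposal must stand on the strength of those classical arguments. Each half of your proposal has a genuine gap, and in both cases it sits exactly where you flag the ``delicate part.''

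For (2), the transitivity step is both unprovable by your method and unnecessary. From $\Ind(U_1,U_2)\cap\F\neq\emptyset$ you can certainly pass to $\Ind(U_1,U_2\cup U_3)\cap\F\neq\emptyset$ and then split $U_2\cup U_3$ using the dynamical Ramsey property, but the resulting disjunction ``$\Ind(U_1,U_2)\cap\F\neq\emptyset$ or $\Ind(U_1,U_3)\cap\F\neq\emptyset$'' gives no information, because the first alternative is already known to hold; no iteration of such splittings manufactures an independence set for $(U_1,U_3)$ out of ones for $(U_1,U_2)$ and $(U_2,U_3)$. Indeed, even for entropy pairs the transitivity of $R\cup\Delta_X$ is a deep theorem proved by measure-theoretic means, not a formal consequence of lifting properties, and it is not expected for general $\F$. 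The Blanchard--Lacroix route avoids the issue entirely: let $Q$ be the \emph{smallest closed invariant equivalence relation containing} $R$ (the intersection of all such relations). Then $X/Q$ is a t.d.s., and Proposition~\ref{dynamical Ramsey to local:prop}(3) shows it has no non-diagonal $\F$-independent pairs, since any such pair lifts to an $\F$-independent pair of $X$, which lies in $R\subseteq Q$ and is therefore collapsed. Your maximality argument is correct and applies verbatim with $Q$ in place of $R$: for any factor $\pi:X\to Z$ without non-diagonal $\F$-independent pairs, $R_\pi=\{(x_1,x_2):\pi(x_1)=\pi(x_2)\}$ is a closed invariant equivalence relation containing $R$, hence containing $Q$.

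For (1), the reduction to ``$J_{x_1}\cap J_{x_2}\neq\emptyset$ for all $x_1,x_2\in X$'' is correct and is the first half of Blanchard's argument, but the step you leave open is the entire difficulty and your sketched route does not close it. To get $\bigcap_{x\in X}J_x\neq\emptyset$ by compactness you would need $n$-wise fiber intersections for every $n$; lifting $\F$-independent $n$-tuples would require $X$ to be $\F$-independent of order $n$, which is not assumed, and you cannot instead pass to the induced joining $\{(x_1,x_2,y):(x_1,y),(x_2,y)\in J\}$ of $X\times X$ with $Y$ and iterate, because order-$2$ independence need not survive products for a general family (the paper's own section on finite products cites Weiss's example of two weakly mixing systems with non-transitive product, i.e.\ the case $\F=\F_{\inf}$). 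Taking a minimal subsystem $M\subseteq J$ only yields $\pi_Y(M)=Y$ and controls nothing about $\pi_X(M)$, since $X$ is not assumed minimal. The actual finish in \cite{B2} is a separate contradiction argument: one starts from a nonempty open box $U\times V$ disjoint from $J$, uses minimality of $Y$ to produce a finite subcover $Y=\bigcup_{i=1}^{N}S^{-k_i}V$, and plays this off against an $\F$-independent pair of $J$ lying over a pair of disjoint closed subsets of $X$, one of which is $\overline{U}$. That argument is not a formal consequence of the pairwise fiber statement, and until you supply it (or a substitute), part (1) is not proved.
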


Different families might lead to the same notion of independence. In fact,
it follows from
Lemma~\ref{ye1}(2)(3) that $\Ind(A_1, \dots, A_k)\cap \F\neq
\emptyset$ if and only if $\Ind(A_1, \dots, A_k)\cap b\F\neq
\emptyset$. Thus we have:

\begin{prop} \label{same indep:prop}
Let $\F$ be a family. Then:
\begin{enumerate}
\item The families $\F$ and $b\F$ define the same
notion of independence.

\item
$\F$ has the dynamical Ramsey property if and only if $b\F$ does.
\end{enumerate}
\end{prop}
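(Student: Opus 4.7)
The plan is to prove (1) first and then derive (2) from it almost for free. Fix a t.d.s.\ $(X,T)$ and a tuple $\bfA=(A_1,\ldots,A_k)$ of subsets of $X$. I will show that $\Ind(\bfA)\cap\F\neq\emptyset$ iff $\Ind(\bfA)\cap b\F\neq\emptyset$. The ``only if'' direction is immediate from $\F\subseteq b\F$, so the content is the converse.

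For the converse in (1), suppose $S\in\Ind(\bfA)\cap b\F$. By the definition of $b\F$, there is some $F\in\F$ such that every finite $W\subseteq F$ satisfies $m+W\subseteq S$ for some $m\in\Z$ (depending on $W$). I claim $F\in\Ind(\bfA)$, which would place $F$ in $\Ind(\bfA)\cap\F$ and finish the argument. By Lemma~\ref{ye1}(2) it suffices to verify that every finite $W\subseteq F$ lies in $\Ind(\bfA)$. Pick such a $W$, choose the corresponding $m$, and note $m+W\subseteq S\subseteq\Z_+$ while $W\subseteq F\subseteq\Z_+$. Since $S\in\Ind(\bfA)$, Lemma~\ref{ye1}(1) gives $m+W\in\Ind(\bfA)$, and then Lemma~\ref{ye1}(3) (both $W$ and $m+W$ sit in $\Z_+$) translates this back to $W\in\Ind(\bfA)$. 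This establishes (1).

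For (2), the equivalence is purely formal once (1) is in hand. Assume $\F$ has the dynamical Ramsey property and take closed sets $A_1,\ldots,A_k,A_{1,1},A_{1,2}$ with $A_1=A_{1,1}\cup A_{1,2}$ and $\Ind(A_1,\ldots,A_k)\cap b\F\neq\emptyset$. By (1) the same intersection with $\F$ is nonempty, so by the Ramsey property of $\F$ one of $\Ind(A_{1,i},A_2,\ldots,A_k)\cap\F$ ($i=1,2$) is nonempty, and $\F\subseteq b\F$ promotes this to $b\F$. The reverse implication is symmetric, using (1) to pass from $\Ind\cap\F\neq\emptyset$ to $\Ind\cap b\F\neq\emptyset$ and then back again after applying the Ramsey property of $b\F$.

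I do not anticipate a real obstacle: Lemma~\ref{ye1} does all the heavy lifting, and the only point requiring a little care is the bookkeeping in the proof of~(1), namely checking that the translates $W$ and $m+W$ both sit in $\Z_+$ so that the translation-invariance clause of Lemma~\ref{ye1}(3) genuinely applies, and noting that the witness $m$ in the definition of $b\F$ is allowed to depend on $W$ (which is exactly what Lemma~\ref{ye1}(2) lets us exploit, since independence is determined by finite subsets).
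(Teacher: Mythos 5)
Your proof is correct and follows essentially the same route as the paper, which simply observes that $\Ind(A_1,\dots,A_k)\cap\F\neq\emptyset$ iff $\Ind(A_1,\dots,A_k)\cap b\F\neq\emptyset$ as a consequence of Lemma~\ref{ye1}(2)(3) (with the hereditary property (1) used implicitly), and then notes that (2) is formal. Your attention to the point that both $W$ and $m+W$ lie in $\Z_+$ before invoking the translation clause is exactly the right bookkeeping.
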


\begin{thm}\label{ye3} Let $\F_1, \F_2$ be two families having
the dynamical Ramsey property. Then each $\F_1$-independent pair is
an $\F_2$-independent pair and viceversa if and only if
$b\F_1=b\F_2$.
\end{thm}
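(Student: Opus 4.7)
The plan is to reduce both directions to Proposition~\ref{same indep:prop}(1), which says that $\F$ and $b\F$ always define the same notion of independence. The ``if'' direction is then immediate: if $b\F_1=b\F_2$, then a pair is $\F_1$-independent iff it is $b\F_1$-independent iff it is $b\F_2$-independent iff it is $\F_2$-independent.

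For the ``only if'' direction, by symmetry it suffices to show $b\F_1\subseteq b\F_2$. I take $F\in b\F_1$, witnessed by some $F_0\in\F_1$ in the sense that every finite $W\subseteq F_0$ admits some $m\in\Z$ with $m+W\subseteq F$. I then form $X$, the subshift of $\{0,1\}^{\Z}$ generated by $\{1_E:E\subseteq F\}$, and verify (in the spirit of Lemma~\ref{ye1}(4)) that $F_0\in\Ind([0]_X,[1]_X)$: given a finite $J\subseteq F_0$ and $s:J\to\{0,1\}$, I set $S=s^{-1}(1)$, pick $m$ with $m+J\subseteq F$, and note that $E:=m+S\subseteq F$ gives $T^m 1_E\in X$ realizing $s$ on $J$.

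Thus $\Ind([0]_X,[1]_X)\cap\F_1\neq\emptyset$. Using the dynamical Ramsey property of $\F_1$ and Proposition~\ref{dynamical Ramsey to local:prop}(1), I obtain an $\F_1$-independent pair $(x_0,x_1)\in[0]_X\times[1]_X$. By hypothesis it is also an $\F_2$-independent pair, and since $[0]_X,[1]_X$ are clopen neighborhoods of $x_0,x_1$, this yields $F_0'\in\F_2\cap\Ind([0]_X,[1]_X)$. The final step, which I expect to be the only real obstacle, is to show that $F_0'$ witnesses $F\in b\F_2$: for any finite $W\subseteq F_0'$, the constant-$1$ choice of $s$ in the independence condition produces $x\in X$ with $x(j)=1$ for all $j\in W$; writing $x$ as a limit of $T^{n_k}1_{E_k}$ with $E_k\subseteq F$, pointwise convergence on the finite set $W$ forces $1_{E_k}(j+n_k)=1$ for every $j\in W$ once $k$ is large, so $n_k+W\subseteq E_k\subseteq F$, and $m:=n_k$ does the job. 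This gives $F\in b\F_2$, completing the proof.
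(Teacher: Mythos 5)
Your proof is correct and follows essentially the same route as the paper's: encode $F$ in a subshift of $\{0,1\}^{\Z}$, use the dynamical Ramsey property to produce an $\F_1$-independent pair in $[0]_X\times[1]_X$, transfer it to $\F_2$ via the hypothesis, and read a witness for $F\in b\F_2$ off any $F'\in\F_2\cap\Ind([0]_X,[1]_X)$ by realizing the constant-$1$ pattern inside the orbit closure. The only (harmless) difference is bookkeeping: the paper starts from $F\in\F_1$, concludes $\F_1\subseteq b\F_2$, and then applies $b(b\F_2)=b\F_2$, whereas you start directly from $F\in b\F_1$ and check the slightly stronger fact that its witness $F_0\in\F_1$ lies in $\Ind([0]_X,[1]_X)$.
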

\begin{proof} The ``if" part follows from Proposition~\ref{same indep:prop}.

Now assume that each $\F_1$-independent pair is an
$\F_2$-independent pair.
We are going to show that $b\F_1\subseteq b\F_2$.

Let $F\in \F_1$. Denote by $X$ the smallest closed shift-invariant
subset of $\{0, 1\}^{\Z}$ containing $\{1_E:E\subseteq F\}$. Then
$F\in \Ind([0]_X,[1]_X)$ and $$X=\overline {\{T^i1_E: i\in\Z,
E\subseteq F\}},$$ where $T$ denotes the shift. Since $\F_1$ has the
dynamical Ramsey property, there exists $(x,y)\in [0]_X\times [1]_X$
which is $\F_1$-independent. As each $\F_1$-independent pair is an
$\F_2$-independent pair, we get that $\Ind([0]_X, [1]_X)\cap
\F_2\not=\emptyset$. Let $F'\in \Ind([0]_X, [1]_X)\cap \F_2$. For any
finite subset $W$ of $F'$, there exists $x_W\in \bigcap_{k\in
W}T^{-k}([1]_X)$. Then $x_W(k)=1$ for every $k\in W$. Since $x_W\in
X$, it follows that there exists some $m\in \Z$ with $m+W\subseteq F$.
Therefore $F\in
b\F_2$. Thus $\F_1\subseteq b\F_2$, and hence $b\F_1\subseteq
b(b\F_2)=b\F_2$. This proves the ``only if" part.
\end{proof}

From Theorem~\ref{ye3} one sees that if a family $b\F$ has the
dynamical Ramsey property, then among the families which has the
dynamical Ramsey property and defines the same independence as $\F$
does, $b\F$ is the largest one.

\begin{lem} \label{dynamical Ramsey to Ramsey:lemma}
Let $\F$ be a family. If $\F$ has the Ramsey property, then for any
t.d.s. $(X, T)$ and closed subsets $Y, Y_1, Y_2$ of $X$ with
$Y=Y_1\cup Y_2$ and $\Ind(Y)\cap \F\neq \emptyset$, one has either
$\Ind(Y_1)\cap \F\neq \emptyset$ or $\Ind(Y_2)\cap \F\neq \emptyset$.
The converse holds if furthermore $\F=b\F$.
\end{lem}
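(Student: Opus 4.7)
For the forward direction, assume $\F$ has the Ramsey property and pick $F \in \Ind(Y) \cap \F$. My plan is to produce a $2$-coloring $\sigma \colon F \to \{1,2\}$ such that for every finite $W \subseteq F$ one has $\bigcap_{n \in W} T^{-n} Y_{\sigma(n)} \neq \emptyset$; then the fibers $F_i' := \sigma^{-1}(i)$ for $i=1,2$ partition $F$, satisfy $F_i' \in \Ind(Y_i)$, and Ramsey applied to $F = F_1' \cup F_2' \in \F$ yields $F_1' \in \F$ or $F_2' \in \F$, which finishes the argument.

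To build $\sigma$, I would argue by compactness in $\{1,2\}^F$: for each finite $W \subseteq F$, let
\[
C_W \;=\; \bigl\{\tau \in \{1,2\}^F : \bigcap_{n\in W} T^{-n} Y_{\tau(n)} \neq \emptyset \bigr\}.
\]
This depends only on $\tau|_W$, so it is closed. It is nonempty since for any $x \in \bigcap_{n\in W} T^{-n} Y$ (which exists because $F \in \Ind(Y)$) one can define $\tau(n)$ so that $T^n x \in Y_{\tau(n)}$. The finite intersection property is immediate: for $W_1,\dots,W_\ell \subseteq F$ one has $C_{W_1 \cup \cdots \cup W_\ell} \subseteq \bigcap_i C_{W_i}$. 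Compactness of $\{1,2\}^F$ then gives $\bigcap_W C_W \neq \emptyset$, producing the desired $\sigma$.

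For the converse, assume $\F = b\F$ and the splitting property; given $F = F_1 \cup F_2 \in \F$, replace $F_2$ by $F_2 \setminus F_1$ (which is legitimate since $\F$ is hereditary upward) so we may assume $F_1 \cap F_2 = \emptyset$. I would then construct an auxiliary subshift encoding every pair of subsets of $F_1$ and $F_2$: for each $E_1 \subseteq F_1$, $E_2 \subseteq F_2$, define $y_{E_1,E_2} \in \{0,1,2\}^{\Z}$ by $y_{E_1,E_2}(n) = i$ if $n \in E_i$ and $0$ otherwise, and let $X$ be the orbit closure under the shift $T$ of the set of all such points. Put $Y_i = [i]_X$ and $Y = Y_1 \cup Y_2$. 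Taking $W \subseteq F$ finite and looking at $y_{W \cap F_1, W \cap F_2}$ shows $F \in \Ind(Y) \cap \F$, so the splitting hypothesis gives some $G \in \Ind(Y_i)\cap\F$ with, say, $i=1$.

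It remains to show $F_1 \in \F$. The key observation is that in $X$, a symbol $1$ at coordinate $n$ only arises as a shifted copy of a coordinate lying in $F_1$. More precisely, for any finite $W \subseteq G$ and any $z_W \in \bigcap_{n\in W} T^{-n}Y_1$, approximating $z_W$ by shifts of some $y_{E_1^k,E_2^k}$ yields an integer $m$ with $m + W \subseteq F_1$. Thus $G$ witnesses $F_1 \in b\F$, and the assumption $\F = b\F$ gives $F_1 \in \F$; similarly if instead $\Ind(Y_2) \cap \F \neq \emptyset$, we get $F_2 \in \F$. The main subtlety will be making this last approximation argument precise—carefully extracting from membership of $z_W$ in the orbit closure the existence of the required translate $m$—but this is the same kind of reasoning used in Example~\ref{central vs ps:ex} and Theorem~\ref{ye3}, so no new ideas are needed.
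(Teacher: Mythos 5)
Your proof is correct and follows essentially the same route as the paper's: split a witnessing set $F\in\Ind(Y)\cap\F$ into $F_1'\cup F_2'$ according to which piece of $Y$ the orbit visits and apply the Ramsey property, and for the converse encode a disjoint decomposition $F=F_1\cup F_2$ into a $\{0,1,2\}$-subshift and deduce $F_1\in b\F=\F$. The only cosmetic differences are that in the forward direction the paper simply takes a single point $x\in\bigcap_{n\in F}T^{-n}Y$ (nonempty by compactness of $X$ since $Y$ is closed) and sets $\sigma(n)=j$ when $T^nx\in Y_j$, instead of running your compactness argument in the coloring space $\{1,2\}^F$, and in the converse the paper uses the orbit closure of the single point $1_{F_1}+2\cdot 1_{F_2}$ rather than of all the points $y_{E_1,E_2}$ (your larger subshift is harmless but unnecessary).
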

\begin{proof} Suppose that $\F$ has the Ramsey property. Consider a
t.d.s. $(X, T)$ and closed subsets $Y, Y_1, Y_2$ of $X$ with
$Y=Y_1\cup Y_2$ and $\Ind(Y)\cap \F\neq \emptyset$. Take $F\in
\Ind(Y)\cap \F$. Then $\cap_{n\in F}T^{-n}(Y)\neq \emptyset$. Say,
$x\in \cap_{n\in F}T^{-n}Y$. Set $F_j=\{n\in F: T^nx\in Y_j\}$ for
$j=1, 2$. Then $F=F_1\cup F_2$, and hence either $F_1\in \F$ or
$F_2\in \F$. Thus either $\Ind(Y_1)\cap \F\neq \emptyset$ or
$\Ind(Y_2)\cap \F\neq \emptyset$.

Now suppose that $\F=b\F$, and for any t.d.s. $(X, T)$ and closed
subsets $Y, Y_1, Y_2$ of $X$ with $Y=Y_1\cup Y_2$ and $\Ind(Y)\cap
\F\neq \emptyset$, one has either $\Ind(Y_1)\cap \F\neq \emptyset$ or
$\Ind(Y_2)\cap \F\neq \emptyset$. Let $F\in \F$ and $F=F_1\cup F_2$
with $F_1\cap F_2=\emptyset$. Denote by $X$ the smallest closed
shift-invariant subset of $\{0, 1, 2\}^{\Z}$ containing
$1_{F_1}+2\cdot 1_{F_2}$. Then $F\in \Ind([1]_X\cup [2]_X)$. By
assumption we have either $\Ind([1]_X)\cap \F\neq \emptyset$ or
$\Ind([2]_X)\cap \F\neq \emptyset$. Without loss of generality let us
assume that $\Ind([1]_X)\cap \F\neq \emptyset$. Say, $F'\in
\Ind([1]_X)\cap \F$. Since $X$ is the orbit closure of
$1_{F_1}+2\cdot 1_{F_2}$, it follows that for any finite subset $W$
of $F'$ there exists some $m\in \Z$ with $m+W\subseteq F_1$. Thus
$F_1\in b\F=\F$. Therefore $\F$ has the Ramsey property.
\end{proof}

From Proposition~\ref{same indep:prop} and Lemma~\ref{dynamical
Ramsey to Ramsey:lemma} we get:

\begin{prop}\label{ye4} Let $\F$ be a family. If $\F$ has the dynamical Ramsey
property, then $b\F$ has the Ramsey property.
\end{prop}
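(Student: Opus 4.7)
The plan is to chain together Proposition~\ref{same indep:prop}(2) with the converse direction of Lemma~\ref{dynamical Ramsey to Ramsey:lemma}, applied to $b\F$ rather than to $\F$ itself. This is essentially a packaging of two results already proved in the section.

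First, I would invoke Proposition~\ref{same indep:prop}(2): since $\F$ has the dynamical Ramsey property, so does $b\F$. Next, I would specialize the dynamical Ramsey property of $b\F$ to the case $k=1$ of Definition~\ref{dynamical Ramsey property:def}: for any t.d.s. $(X,T)$ and closed subsets $Y, Y_1, Y_2$ of $X$ with $Y = Y_1 \cup Y_2$ and $\Ind(Y) \cap b\F \neq \emptyset$, one has either $\Ind(Y_1) \cap b\F \neq \emptyset$ or $\Ind(Y_2) \cap b\F \neq \emptyset$. This is precisely the hypothesis required by the converse statement of Lemma~\ref{dynamical Ramsey to Ramsey:lemma}.

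Finally, I would note that the extra assumption $\F = b\F$ in that converse is automatically satisfied with $\F$ replaced by $b\F$, because $b(b\F) = b\F$ as observed right after Definition~\ref{block:def}. Applying the converse part of Lemma~\ref{dynamical Ramsey to Ramsey:lemma} to $b\F$ therefore yields that $b\F$ has the Ramsey property, which is the desired conclusion.

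There is no real obstacle here: once Proposition~\ref{same indep:prop} and Lemma~\ref{dynamical Ramsey to Ramsey:lemma} are in place, the argument is a one-line assembly. The only thing to verify, trivially, is that the $k=1$ instance of Definition~\ref{dynamical Ramsey property:def} matches the hypothesis used in the converse half of Lemma~\ref{dynamical Ramsey to Ramsey:lemma}.
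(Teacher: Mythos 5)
Your proof is correct and is exactly the paper's argument: the authors derive Proposition~\ref{ye4} by combining Proposition~\ref{same indep:prop}(2) (to transfer the dynamical Ramsey property from $\F$ to $b\F$) with the converse half of Lemma~\ref{dynamical Ramsey to Ramsey:lemma} applied to $b\F$, using $b(b\F)=b\F$. Nothing is missing.
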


We remark that if $\F$ has the dynamical Ramsey property, it is not
necessarily true that $\F$ has the Ramsey property. For example,
$\F_{\pd}$  has the dynamical Ramsey property, but not the Ramsey property.

It is easy to see that $\F_{\ps}$ has the Ramsey property.
It is also known that $\F_{\cen}$ has the Ramsey property \cite[Corollary 2.16]{Ber03}.
The
celebrated Hindman theorem \cite{Hindman74} says that $\F_{\ip}$ has
the Ramsey property.
This leads to the following questions:

\begin{ques} \label{Ramsey vs dynamical Ramsey:ques}
Is there any family which has the Ramsey property but not the dynamical Ramsey property?
\end{ques}

\begin{ques} \label{dynamical Ramsey for ps and ip:ques}
Do the families $\F_{\ps}$ and $\F_{\ip}$ have the dynamical Ramsey
property?
\end{ques}

To end the section we shall discuss $1$-independence for various
families.
Denote by $\F_{\rs}$ the family generated by  $\{n\Z_+:
n\in\N\}$. The following notion was introduced in \cite{HY3}. Let
$(X,T)$ be a t.d.s.. We say that $(X,T)$ has {\it dense small periodic
sets}, if for any nonempty open subset $U$ of $X$ there exist a
nonempty closed $A\subseteq U$ and $k\in\N$ such that $T^kA\subseteq A$.
To state our result we need a local version of
this notion. That is, for a point $x$ in a t.d.s. $(X,T)$, $x$ is
called {\it quasi regular} if for each neighborhood $U$ of $x$,
there exist a nonempty closed $A\subseteq U$ and $k\in\N$ such that
$T^kA\subseteq A$.
The closed set of quasi regular points of $T$
is denoted by $\QR(T)$.

\begin{thm}\label{singleset}
Let $(X,T)$ be a t.d.s.. Then
\begin{enumerate}
\item $x\in X$ is $\F_{\ip}$-independent iff
$x\in \overline{\Rec(T)}$, where $\Rec(T)$ denotes the set of recurrent
points of $T$. Thus, $(X,T)$ is $\F_{\ip}$-independent of order 1 iff
$\overline{\Rec(T)}=X$.

\item $x\in X$ is $\F_{\inf}$-independent iff
$x\in \overline{\Lambda(T)}$, where $\Lambda(T)=\cup_{x\in X} \omega(x,T)$
and $w(x, T)=\cap_{k\ge 0}\overline{\cup_{n\ge k} \{T^nx\}}$.
Thus, $(X,T)$ is $\F_{\inf}$-independent of order 1 iff
$\overline{\Lambda(T)}=X$,
iff $\overline{\Rec(T)}=X$.

\item $x\in X$ is $\F_{\pubd}$-independent iff
 $x\in \overline{\rM(T)}$, where $\rM(T)={\cup_{\mu\in \rM(X, T)} \supp(\mu)}$
 and $\rM(X, T)$ denotes the set of all invariant Borel probability measures on $X$. Thus,
$(X,T)$ is $\F_{\pubd}$-independent of order 1 iff $\overline{\rM(T)}=X$,
iff there exists a $\mu\in \rM(X,T)$ with full support.

\item $x\in X$ is $\F_{\ps}$-independent if
$x\in \overline{\AP(T)}$, where $\AP(T)$ denotes the set of minimal points
of $T$. Thus, $(X,T)$ is $\F_{\ps}$-independent of order 1 iff
$\overline{\AP(T)}=X$.

\item $x\in X$ is $\F_{\rs}$-independent iff
$x\in \QR(T)$. Thus, $(X,T)$ is $\F_{\rs}$-independent of
order 1 iff $\QR(T)=X$.
\end{enumerate}
\end{thm}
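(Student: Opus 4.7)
All five parts share a common template. By Lemma~\ref{ye1}(1,2) and a finite-intersection/compactness argument, the condition $\Ind(U)\cap\F\ne\emptyset$ for a neighborhood $U$ of $x$ is equivalent to the existence of some $z\in X$ and some $F\in\F$ with $T^jz\in\overline U$ for every $j\in F$; by first replacing $U$ with a smaller $U'$ whose closure sits inside $U$, one may further arrange $T^jz\in U$. Conversely, each structural condition on $x$ provides an explicit such witness. The plan is to execute each of the five parts along this template, supplying the family-specific ingredient.

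For (1)$(\Leftarrow)$, given a recurrent $y\in U$ I inductively build $a_1<a_2<\cdots$ by shrinking neighborhoods of $y$ to absorb every partial sum accumulated so far, yielding $FS(\{a_n\})\subseteq N(y,U)$. For (1)$(\Rightarrow)$---the main obstacle of (1)---I invoke an idempotent ultrafilter $p\in\beta\N$ containing the given IP-set $F$ (Galvin--Glazer--Hindman): the $p$-limit $z^*=p\text{-}\lim_n T^nz$ lies in $\overline U$, and the equation $p+p=p$ forces $z^*=p\text{-}\lim_n T^nz^*$; since $p$ is nonprincipal this exhibits $z^*\in\Rec(T)\cap\overline U$, and prior shrinking of $U$ locates the recurrent point inside $U$. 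Part (2) runs along the same template with ``infinite'' in place of ``IP'': compactness gives $\omega(z,T)\cap\overline U\ne\emptyset$, whence $x\in\overline{\Lambda(T)}$, and the reverse is immediate. The global equivalence $\overline{\Lambda(T)}=X\Leftrightarrow\overline{\Rec(T)}=X$ follows from $\Rec(T)\subseteq\Lambda(T)$ together with the classical fact that $\omega$-limit points of any compact metric t.d.s.\ are approximable by recurrent points, which in turn follows from applying the idempotent argument of (1) inside the compact forward-invariant set $\omega(z,T)$ to the infinite set of return times of $z$ to a neighborhood of $y\in\Lambda(T)\cap U$.

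For (3)$(\Leftarrow)$, ergodic decomposition and Birkhoff's theorem produce $\mu'$-a.e.\ $z$ with $d(N(z,U))=\mu'(U)>0$ when $x\in\supp\mu'$; for (3)$(\Rightarrow)$, a Krylov--Bogolyubov-style weak-$*$ limit of the empirical measures $|I_k|^{-1}\sum_{n\in I_k}\delta_{T^nz}$ along intervals realizing the positive upper Banach density yields a $T$-invariant $\nu$ with $\nu(\overline U)>0$, so $\supp\nu\cap\overline U\ne\emptyset$ and $x\in\overline{\rM(T)}$; the full-support equivalence is then a countable convex combination of measures whose supports form a dense union. Part (5) is the most direct: $F\in\F_{\rs}$ contains some $n\Z_+$, the template furnishes $z$ with $T^{jn}z\in\overline U$ for every $j\ge 0$, and $A:=\overline{\{T^{jn}z:j\ge 0\}}\subseteq\overline U$ is closed with $T^nA\subseteq A$, witnessing $x\in\QR(T)$ after the preliminary shrinkage; the converse is immediate from the definition of $\QR(T)$.

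For (4), the ``if'' direction is trivial because a minimal $y\in U$ has syndetic $N(y,U)\in\Ind(U)\cap\F_{\rms}\subseteq\Ind(U)\cap\F_{\ps}$; the global converse is the second main obstacle. Using Example~\ref{s vs ps:ex} and Proposition~\ref{same indep:prop} I may replace $\F_{\ps}$ by $\F_{\rms}$, so the template yields $z$ and a syndetic $F$ of gap bound $L$ with $T^jz\in\overline U$ for $j\in F$. For any minimal subset $M\subseteq\omega(z,T)$ and any $v=\lim T^{n_k}z\in M$, syndeticity of $F$ forces $j_k\in[0,L]$ with $n_k+j_k\in F$; passing to a subsequence with constant $j_k\equiv j$, continuity gives $T^jv\in\overline U$ and $T$-invariance of $M$ gives $T^jv\in M$, so $M\cap\overline U\ne\emptyset$ supplies a minimal point in $\overline U$, and shrinkage of $U$ yields $\overline{\AP(T)}=X$. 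Together with the idempotent-ultrafilter step in (1)--(2), this minimal-point extraction is the principal technical content of the theorem; the remaining parts are mechanical instances of the common template.
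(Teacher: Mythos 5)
Most of your argument is sound, and in parts (1), (3) and (4) you in effect reprove statements the paper simply cites: your idempotent--ultrafilter argument for (1)$(\Rightarrow)$ is a correct self-contained proof of the Blokh--Fieldsteel fact that IP return times force recurrence, your syndetic-gap extraction of a minimal point in (4) replaces their Theorem~6, and your Krylov--Bogolyubov argument replaces the citation of \cite[Proposition 3.12]{KL2}; part (5) coincides with the paper's proof. The one genuine gap is the last equivalence in part (2). You deduce $\overline{\Lambda(T)}=X\Rightarrow\overline{\Rec(T)}=X$ from the ``classical fact'' $\Lambda(T)\subseteq\overline{\Rec(T)}$, proposing to prove it by running the idempotent argument of (1) on the \emph{infinite} set of return times of $z$ to a neighborhood of $y$. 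That argument requires an IP-set (a set lying in some idempotent ultrafilter); an arbitrary infinite set carries no such structure, and the claimed inclusion is in fact false. Let $n_k=k(k+1)/2$, let $z=1_{\{n_k:k\in\N\}}\in\{0,1\}^{\Z}$, and let $X$ be the orbit closure of $z$ under the shift $\sigma$. Since $n_{k+1}-n_k\to\infty$, $X$ consists of the orbit of $z$, the sequences with a single $1$, and the fixed point ${\bf 0}$; the point $y$ with a single $1$ at the origin lies in $\omega(z,\sigma)\subseteq\Lambda(\sigma)$, yet $\Rec(\sigma)=\{{\bf 0}\}$, so $y\notin\overline{\Rec(\sigma)}$. (Consistently with part (2), $y$ \emph{is} $\F_{\inf}$-independent via the independence set $\{n_k\}$ for any neighborhood of $y$; this example is exactly why the theorem states the $\overline{\Rec(T)}$ equivalence only globally and not pointwise.)

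The repair is the paper's route, which is intrinsically global: $\F_{\inf}$-independence of order $1$ (equivalently $\overline{\Lambda(T)}=X$, by the part of (2) you did prove) implies that $(X,T)$ is non-wandering --- if $\omega(z,T)$ meets a nonempty open $U$ then $T^{n}z\in U$ for two distinct times $n<n'$, whence $U\cap T^{-(n'-n)}U\neq\emptyset$ --- and a non-wandering system has dense (indeed residual) recurrent points: each set $A_\epsilon=\{x:\ d(T^nx,x)<\epsilon \text{ for some } n\ge 1\}$ is open and, by non-wandering applied to open sets of diameter less than $\epsilon$, dense, and every point of $\bigcap_{k}A_{1/k}$ is recurrent. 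This is the content of \cite[Theorem 1.27]{HF} invoked in the paper. Substituting this for your false lemma closes the gap; the rest of your proposal stands.
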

\begin{proof}
(1). Assume that $x\in X$ is $\F_{\ip}$-independent and $U$ is a closed
neighborhood of $x$. Then $\Ind(U)\cap \F_{\ip}\not=\emptyset$, and
hence there are an $\IP$-set $F$ and $y\in X$ such that $T^{i}y\in U$
for each $i\in F$. By \cite[Theorem 5]{BF}, $U\cap
\Rec(T)\not=\emptyset$, i.e. $x\in \overline{\Rec(T)}$.

Conversely, assume that $x\in \overline{\Rec(T)}$ and $U$ is an open
neighborhood of $x$. Then there  exists a $y\in \Rec(T)\cap U$.
By \cite[Theorem 2.17]{HF}, the set $N(y, U)$
contains an $\IP$-set. Thus $\Ind(U)\cap \F_{\ip}\not=\emptyset$.

(2). The first statement follows easily from the
definition. The statement that the $\F_{\inf}$-independence of order $1$
for $(X, T)$ implies
$\overline{\Rec(T)}=X$ follows from
the fact that if $(X, T)$ is {\it non-wandering}
in the sense that $\N\cap N(U, U)\neq \emptyset$ for every nonempty open subset $U$ of $X$,
then $\overline{\Rec(T)}=X$ \cite[Theorem 1.27]{HF}.

(3). This was proved in \cite[Proposition 3.12]{KL2}.

(4). Assume that $x\in X$ is $\F_{\ps}$-independent and $U$ is a closed
neighborhood of $x$. Then $\Ind(U)\cap \F_{\ps}\not=\emptyset$, and
hence there are a piecewise syndetic set $F$ and $y\in X$ such that
$T^{i}y\in U$ for each $i\in F$. By \cite[Theorem 6]{BF}, $U\cap \AP(T)\not=\emptyset$,
i.e. $x\in \overline{\AP(T)}$.

Conversely, assume that $x\in \overline{\AP(T)}$ and $U$ is an open
neighborhood of $x$. Then there is $y\in \AP(T)\cap U$. By a
well-known result of Gottschalk, $N(y, U)$ contains a syndetic
set. Thus $\Ind(U)\cap \F_{\ps}\not=\emptyset.$

(5). It is clear that if $x\in \QR(T)$ then $x$ is an
$\F_{\rs}$-independent point. Assume now that $x$ is an
$\F_{\rs}$-independent point. Let $U$ be a closed  neighborhood of
$x$. Then there is exists a $k\in\N$ such that $k\Z_+$ is in $\Ind(U)$.
Take $z\in \cap_{n\in \Z_+}T^{-kn}U$.
Then $T^{kn}z\in U$ for all $n\in\Z_+$. Thus
$A:=\overline{\{T^{kn}z:n\in\Z_+\}}$ is contained in $U$. It is clear that
$T^kA\subseteq A$.
\end{proof}

\begin{rem} \label{bFrs not Ramsey:cor}
The family $b\F_{\rs}$ does not have the Ramsey property.
\end{rem}
\begin{proof} Let $(X,T)$ be a non-trivial {\it totally minimal} t.d.s., i.e., $X$ is minimal
under $T^k$ for every $k\in \N$. For example, any minimal $(X, T)$ with
$X$ a connected topological space is totally minimal \cite[II(9.6)8]{Vries}.
Let $U$ be a nonempty
open subset of $X$ with $\overline{U}\not=X$. Then $X=X_1\cup X_2$
with $X_1=\overline{U}$ and $X_2=X\setminus U$. Let
$y\in X$. We claim that $N(y,X_i)\not\in  b\F_{\rs}$ for each
$i=1,2$. Assume the contrary that $N(y, X_1)\in b\F_{\rs}$. This
means that there are $d\in \N$ and a sequence $\{n_i\}_{i\in \N}$ in $\Z_+$ such that for
each $i$, $T^{n_i+dj}(y)\in X_1$ for each $0\le j\le i$. Replacing $\{n_i\}_{i\in \N}$ by
a subsequence if necessary, we may assume that $T^{n_i}(y)$ converges to some $z\in X$. Then $z\in X_1$
and $T^{dj}(z)\in X_1$ for each $j\in\N$, contradicting the assumption that
$(X,T)$ is totally minimal. The same argument shows that
$N(y,X_2)\not\in  b\F_{\rs}$. Since $\Z_+=N(y, X)=N(y,X_1)\cup N(y,X_2)$, we
conclude that $b\F_{\rs}$ does not have the Ramsey property.
\end{proof}

\section{Independence: measurable case}

In this section, for a given family $\F$, we define
$\F$-independence for m.d.s., and discuss $1$-independence for
various families. First we define independence sets for m.d.s.,
similar to that for t.d.s. in Definition~\ref{indede}.

\begin{de}\label{indede1}
Let $(X,\B, \mu, T)$ be a m.d.s..
For a tuple
$\bfA=(A_1,\ldots,A_k)$ of sets in $\B$, we say that a subset
$F\subseteq \Z_+$ is an {\it independence set} for $\bfA$ if for
any nonempty finite subset $J\subseteq F$, we have $$\mu(\bigcap_{j\in
J}T^{-j}A_{s(j)})>0$$ for any $s\in \{1,\dots,k\}^J$.
\end{de}

We shall still denote the collection of all independence sets for $\bfA$ by
$\Ind(A_1,\ldots, A_k)$ or $\Ind \bfA$.
Note that Lemma~\ref{ye1}.(1)-(3) holds also
for m.d.s..

\begin{prop} \label{dynamcal Ramsey:prop}
Let $\F$ be a family with the dynamical Ramsey property. For any
m.d.s. $(X, \B, \mu, T)$, any $k\in \N$ and $A_1, A_2, \dots, A_k,
A_{1,1}, A_{1, 2}\in \B$ with $A_1=A_{1, 1}\cup A_{1, 2}$, if
$\Ind(A_1, A_2, \dots, A_k)\cap \F\neq \emptyset$, then either
$\Ind(A_{1, 1}, A_2, \dots, A_k)\cap \F\neq \emptyset$ or
$\Ind(A_{1, 2}, A_2, \dots, A_k)\cap \F\neq \emptyset$.
\end{prop}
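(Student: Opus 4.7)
The plan is to reduce the statement to the topological dynamical Ramsey property of $\F$ via a standard symbolic model. Given the m.d.s. $(X,\B,\mu,T)$ and the sets $A_{1,1}, A_{1,2}, A_2, \ldots, A_k \in \B$ with $A_1 = A_{1,1} \cup A_{1,2}$, take $Z = (\{0,1\}^{k+1})^{\Z_+}$ with the shift $S$, and define the measurable equivariant coding map $\psi : X \to Z$ by
\[
\psi(x)(j) = \bigl(1_{A_{1,1}}(T^j x),\ 1_{A_{1,2}}(T^j x),\ 1_{A_2}(T^j x), \ldots, 1_{A_k}(T^j x)\bigr).
\]
I would push $\mu$ forward to $\nu := \psi_{*}\mu$, let $Y := \supp(\nu)$, and regard $(Y,S|_Y)$ as a t.d.s. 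Inside $Y$ I would define the clopen sets $B_{1,1}, B_{1,2}, B_2, \ldots, B_k$ by requiring the appropriate entry of $z(0)$ to equal $1$, and set $B_1 := B_{1,1} \cup B_{1,2}$; these are all closed in $Y$.

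The key step is to show that independence sets correspond exactly under $\psi$. For a finite $J \subseteq \Z_+$ and $s : J \to \{1,\ldots,k\}$, write $D_s = \bigcap_{j \in J} S^{-j} B_{s(j)}$. Then $D_s$ is clopen in $Y$ and $\psi^{-1}(D_s) = \bigcap_{j \in J} T^{-j} A_{s(j)}$, so $\mu\bigl(\bigcap_{j \in J} T^{-j} A_{s(j)}\bigr) = \nu(D_s)$. Since $Y = \supp(\nu)$ and $D_s$ is open in $Y$, the implication $D_s \neq \emptyset \Rightarrow \nu(D_s) > 0$ holds, and the reverse implication is trivial. Consequently, $F \in \Ind(A_1, A_2, \ldots, A_k)$ in the m.d.s. sense iff $F \in \Ind(B_1, B_2, \ldots, B_k)$ in the t.d.s. sense, and the same equivalence holds with $A_1, B_1$ replaced by $A_{1,j}, B_{1,j}$ for $j = 1, 2$.

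With this correspondence in hand the rest is immediate: apply the dynamical Ramsey property of $\F$ to the t.d.s. $(Y,S)$ with the closed sets $B_1 = B_{1,1} \cup B_{1,2}, B_2, \ldots, B_k$ to obtain some $F' \in \F$ that is a topological independence set for either $(B_{1,1}, B_2, \ldots, B_k)$ or $(B_{1,2}, B_2, \ldots, B_k)$, then translate back via the correspondence. The main (mild) obstacle is the correspondence itself: one must take $Y = \supp(\nu)$ rather than $\overline{\psi(X)}$, since the equivalence between nonempty intersection of clopen cylinders in $Y$ and positivity of $\mu$-measure of the corresponding sets in $X$ rests precisely on the defining property of the support, that every open set meeting the support has positive $\nu$-measure.
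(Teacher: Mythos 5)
Your proof is correct and follows essentially the same strategy as the paper's: transfer the problem to a subshift on which measurable independence becomes topological independence, then invoke the dynamical Ramsey property there. The only difference is in how the subshift is built --- the paper takes $Y$ to be the set of sequences $s\in\{0,1,\dots,k+1\}^{\Z_+}$ all of whose finite patterns have positive-measure realizations (the extra symbol $k+1$ coding ``no constraint,'' i.e.\ the set $X$), which makes the identification of independence sets immediate without introducing any measure on the symbolic space, whereas you pass through the support of the pushforward $\nu=\psi_*\mu$; both work, though you should note in passing that $\nu$ is shift-invariant (by equivariance of $\psi$) and hence $S(\supp(\nu))=\supp(\nu)$, so that $(Y,S|_Y)$ is a t.d.s.\ in the paper's (surjective) sense.
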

\begin{proof}
Set $B_{k+1}=X$, $B_0=A_{1, 1}$, $B_1=A_{1, 2}$, and $B_i=A_i$ for
$2\le i\le k$. Denote by $Y$ the set of elements $s$ in
$\Sigma_{k+2}:=\{0, 1, \dots, k+1\}^{\Z_+}$ satisfying that for any
nonempty finite subset $J$ of $\Z_+$, $\mu(\bigcap_{j\in
J}T^{-j}A_{s(j)})>0$.  Then $Y$ is a closed subset of
$\Sigma_{k+2}$, and contains the constant function $k+1$. It is also
easily checked that $\sigma(Y)=Y$, where $\sigma$ denotes the shift
map. Thus $(Y, \sigma)$ is a t.d.s.. Note that $\Ind(A_1, A_2,
\dots, A_k)=\Ind([0]_Y\cup[1]_Y, [2]_Y, \dots, [k]_Y)$, $\Ind(A_{1,
1}, A_2, \dots, A_k)=\Ind([0]_Y, [2]_Y, \dots, [k]_Y)$, and
$\Ind(A_{1, 2}, A_2, \dots, A_k)=\Ind([1]_Y, [2]_Y, \dots, [k]_Y)$.
Thus $\Ind([0]_Y\cup[1]_Y, [2]_Y, \dots, [k]_Y)\cap \F\neq
\emptyset$. Since $\F$ has the dynamical Ramsey property, either
$\Ind([0]_Y, [2]_Y, \dots, [k]_Y)\cap \F\neq \emptyset$ or
$\Ind([1]_Y, [2]_Y, \dots, [k]_Y)\cap \F\neq \emptyset$. That is,
either $\Ind(A_{1, 1}, A_2, \dots, A_k)\cap \F\neq \emptyset$ or
$\Ind(A_{1, 2}, A_2, \dots, A_k)\cap \F\neq \emptyset$.
\end{proof}

Next we define $\F$-independence for m.d.s., similar to that for t.d.s.
in Definition~\ref{tuple:def}.

\begin{de} \label{tuple1:def}
Let $\F$ be a family and $k\in\N$.
We say that a m.d.s. $(X,\mathcal{B},\mu,T)$ is
{\it $\F$-independent of order $k$} if for each tuple
$(A_1,\ldots, A_k)$ of sets in $\B$ with positive measures,
$\Ind(A_1,\dots, A_n)\cap \F\not=\emptyset$. It is said to be
{\it $\F$-independent}, if it is $\F$-independent of order
$k$ for each $k\in\N$.
\end{de}

Note that Proposition~\ref{same indep:prop}.(1) holds also
for m.d.s..

\begin{rem} \label{measure algebra:rem}
Given a probability space $(X, \B, \mu)$, one may consider the
equivalence relation defined on $\B$ by $A\sim B$ exactly when
$\mu(A\Delta B)=0$, where $A\Delta B=(A\setminus B)\cup (B\setminus
A)$ is the symmetric difference of $A$ and $B$. The set of
equivalence classes in $\B$, denoted by $\tilde{\B}$, has the
induced operation of taking complement and countable union.
Furthermore, $\mu$ descends to a function $\tilde{\mu}$ on
$\tilde{\B}$. The pair $(\tilde{B}, \tilde{\mu})$ is called a {\it
measure algebra} \cite[Section 5.1]{HF} \cite[Section 2.1]{EG}.
Given a measurable and measure-preserving map $T:X\rightarrow X$,
one also gets an induced map $\widetilde{T^{-1}}:
\tilde{\B}\rightarrow \tilde{\B}$ preserving $\tilde{\mu}$,
complement and countable union. For any family $\F$ and $k\in \N$,
it is clear that whether a m.d.s. $(X, \B, \mu, T)$ is
$\F$-independent of order $k$ or not depends only on the triple
$(\tilde{\B}, \tilde{\mu}, \widetilde{T^{-1}})$.
\end{rem}

Consider a m.d.s. $(X, \B, \mu, T)$  or a t.d.s. $(X, T)$. Let
$\bfA=(A_1, \dots, A_n)$ be a tuple of subsets of $X$ (in $\B$ for
m.d.s.). For each $k\in \N$ set $a_k=\max_{F\in \Ind \bfA}|F\cap [0,
k-1]|$. Then the function $k\mapsto a_k$ on $\N$ is {\it
subadditive} in the sense that $a_{k+j}\le a_k+a_j$ for all $k, j\in
\N$. Thus the limit $\lim_{k\to +\infty}\frac{a_k}{k}$ exists and is
equal to $\inf_{k\in \N}\frac{a_k}{k}$ (see for example
\cite[Theorem 4.9]{PW}). We call  this limit the {\it independence
density} of $\bfA$ and denote it by $I(\bfA)$ (see the discussion
before Proposition 3.23 in \cite{KL2} for the case of actions of
discrete amenable groups). The following lemma was proved by Glasner
and Weiss in the second paragraph of the proof of Theorem 3.2 in
\cite{GW95}, using Birkhoff's ergodic theorem. We give a topological
proof here.

\begin{lem} \label{independence density:lemma}
There exists $F\in \Ind \bfA$ with $d(F)=I(\bfA)$.
\end{lem}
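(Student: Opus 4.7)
The plan is to transfer the problem from the original system to an auxiliary one-sided subshift built directly out of the independence sets. For each $n \in \N$, fix $F_n \in \Ind \bfA$ with $|F_n \cap [0, n-1]| = a_n$, and let $Y \subseteq \{0, 1\}^{\Z_+}$ be the set of those $\omega$ whose support lies in $\Ind \bfA$. By Lemma~\ref{ye1}(1), (2) and (3), $Y$ is closed and invariant under the one-sided shift $\sigma$, so $(Y, \sigma)$ is a t.d.s., and the cylinder $[1]_Y := \{\omega \in Y : \omega(0) = 1\}$ is clopen.

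Second, I would build a shift-invariant probability measure on $Y$ that assigns mass $I(\bfA)$ to $[1]_Y$. Form the Cesaro averages $\mu_n := \frac{1}{n} \sum_{i=0}^{n-1} \delta_{\sigma^i 1_{F_n}}$ on $Y$; a direct computation gives $\mu_n([1]_Y) = |F_n \cap [0, n-1]|/n = a_n/n$, so any weak$^*$-subsequential limit $\mu$ of $\{\mu_n\}$ is $\sigma$-invariant on $Y$ and satisfies $\mu([1]_Y) = I(\bfA)$. In the other direction, if $\nu$ is any ergodic $\sigma$-invariant measure on $Y$ and $\omega$ is $\nu$-generic, then $\supp(\omega) \in \Ind \bfA$ forces $|\supp(\omega) \cap [0, N-1]| \le a_N$ for every $N$, so Birkhoff's theorem gives $\nu([1]_Y) = \lim_N |\supp(\omega) \cap [0, N-1]|/N \le I(\bfA)$.

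Third, I would extract the desired $F$ via ergodic decomposition. Writing $\mu = \int \mu_\theta\, d\theta$, the estimates $\mu_\theta([1]_Y) \le I(\bfA)$ almost surely and $\int \mu_\theta([1]_Y)\, d\theta = I(\bfA)$ together force $\mu_\theta([1]_Y) = I(\bfA)$ on a positive-measure set of $\theta$. Picking any such ergodic $\mu_\theta$ and a $\mu_\theta$-generic point $\omega \in Y$, Birkhoff's theorem then yields $\lim_N |\supp(\omega) \cap [0, N-1]|/N = I(\bfA)$, so $F := \supp(\omega)$ lies in $\Ind \bfA$ and has density exactly $I(\bfA)$.

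The main obstacle is bookkeeping rather than substance: one must check that $Y$ is genuinely shift-invariant despite the one-sided shift possibly erasing the symbol at $0$ (handled by Lemma~\ref{ye1}(1) and (3)), and the same combinatorial construction must apply uniformly in the topological and measurable settings, which it does because $Y$ is defined purely from $\Ind \bfA$. The essential difference from the original Glasner--Weiss argument is that Birkhoff's theorem is invoked on the auxiliary subshift $(Y, \sigma)$ rather than on the original dynamics, so no invariant measure on $X$ needs to be fixed in the topological case.
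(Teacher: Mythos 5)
Your proof is correct, but it takes a genuinely different route from the paper's. The paper deliberately gives a purely combinatorial argument: by a subadditivity/pigeonhole contradiction it produces, for each $k$, a set $F_k\in \Ind\bfA$ contained in $[0,k-1]$ all of whose initial segments $[0,j-1]$ already carry density at least $I(\bfA)-\frac{1}{k}$, and then extracts $F$ as a pointwise limit of the $1_{F_k}$; no measure theory enters. You instead resurrect the original Glasner--Weiss strategy (which the paper explicitly says it is avoiding), but transplanted onto the auxiliary subshift $Y\subseteq\{0,1\}^{\Z_+}$ of indicator functions of independence sets: Krylov--Bogolyubov applied to the orbit averages of $1_{F_n}$ yields an invariant $\mu$ with $\mu([1]_Y)=I(\bfA)$, the bound $|\supp(\omega)\cap[0,N-1]|\le a_N$ for all $\omega\in Y$ caps every ergodic component at $I(\bfA)$, and Birkhoff on a generic point of an extremal component delivers $F$. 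All the steps check out: $Y$ is closed and shift-invariant by Lemma~\ref{ye1}(1)--(3), $[1]_Y$ is clopen so the weak$^*$ limit sees it, and the upper bound on ergodic components is correct. What your version buys is brevity modulo standard machinery; what the paper's version buys is complete elementarity (and the same combinatorial template is reused elsewhere, e.g.\ in Example~\ref{bF:ex}). Your closing claim of an advantage --- that no invariant measure on $X$ is needed --- is equally true of the paper's proof, which uses no measures at all; and in fact you could streamline your own argument by applying Birkhoff directly to $\mu$ (the a.e.\ limit of the averages of $1_{[1]_Y}$ is bounded above by $I(\bfA)$ everywhere it exists and integrates to $I(\bfA)$, hence equals $I(\bfA)$ a.e.), dispensing with the ergodic decomposition entirely.
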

\begin{proof} For each $k\in \N$ we claim that there exists $F_k\in \Ind \bfA$ such
that $F_k\subseteq [0, k-1]$ and $|F_k\cap [0, j-1]|\ge j
(I(\bfA)-\frac{1}{k})$ for all $1\le j\le k$. Suppose that this is
not true. Then $I(\bfA)-\frac{1}{k}>0$. Furthermore, for any $F\in
\Ind \bfA$ we can find a strictly increasing sequence $\{b_i\}_{i\in
\N}$ in $\Z_+$ such that $b_1=0$, and $b_{i+1}-b_i\le k$ and $|F\cap
[b_i, b_{i+1})|<(b_{i+1}-b_i) (I(\bfA)-\frac{1}{k})$ for all $i\in
\N$. Set $m=k^2+1$. Take $F\in \Ind \bfA$ with $|F\cap [0,
m-1]|=a_m$, and let $\{b_i\}_{i\in \N}$ be as above. Then $b_s<m\le
b_{s+1}$ for some $s\in \N$. Thus
\begin{eqnarray*}
m\cdot I(\bfA)&\le& a_m=|F\cap [0, m-1]|=|F\cap [b_s, m-1]|
+\sum^{s-1}_{i=1}|F\cap [b_i, b_{i+1})| \\
&\le& k+\sum^{s-1}_{i=1}(b_{i+1}-b_i)(I(\bfA)
-\frac{1}{k})\le k+m(I(\bfA)-\frac{1}{k}),
\end{eqnarray*}
which contradicts $m=k^2+1$. This proves our claim.

Now some subsequence of $\{1_{F_k}\}_{k\in \N}$ converges in $\{0,
1\}^{\Z_+}$ to $1_F$ for some $F\in \Z_+$. Clearly $F\in \Ind \bfA$
and $|F\cap [0, k-1]|\ge k \cdot I(\bfA)$ for every $k\in \N$. We
also have $\limsup_{k\to +\infty}\frac{|F\cap [0, k-1]|}{k}\le
\lim_{k\to +\infty}\frac{a_k}{k}=I(\bfA)$. Therefore $F$ has density
$I(\bfA)$.
\end{proof}

We now discuss 1-independence for various families.
Using Birkhoff's ergodic theorem,
Bergelsen proved part (1) of the following theorem \cite[Theorem 1.2]{Ber}.
Here we
give a different proof.

\begin{thm}\label{ber}
Let $(X, \B, \mu, T)$ be a m.d.s..
The following hold:
\begin{enumerate}
\item For any $A\in \B$ with $\mu(A)>0$, there exists $F\in \F_{\pd}\cap \Ind(A)$ with
density at least $\mu(A)$. In particular, $(X,\mathcal{B},\mu,T)$ is $\F_{\pd}$-independent of
order 1.

\item $(X, \B,\mu,T)$ is $\F_{\rms}$-independent of order $1$ iff $T$ is a.e. periodic,
iff $(X, \B, \mu, T)$ is $\F_{\rs}$-independent of order $1$,  iff
for each $A\in\mathcal{B}$, a.e. every point of $A$
returns to $A$ syndetically, iff for each $A\in\mathcal{B}$,
 a.e. every point of $A$ returns to $A$ along
$n\Z_+$ for some $n\in\N$.

\item Let $(X,\mathcal{B},\mu,T)$ be a m.d.s. If
$A$ is in $\mathcal{B}$ with $\mu(\bigcup_{i=0}^{n-1} T^{-i}A)=1$
for some $n\in \N$, then $\Ind(A)\cap \F_{\rms}\not=\emptyset$.
\end{enumerate}
\end{thm}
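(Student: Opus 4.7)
For part (1), I apply Lemma~\ref{independence density:lemma} to $\bfA=(A)$, reducing the task to the bound $I(\{A\})\ge \mu(A)$ on the independence density. For each $k\in\N$ I use the coding map $g_k:X\to\{0,1\}^{k}$ defined by $g_k(x)_i=1_A(T^i x)$; the level sets $X_s:=g_k^{-1}(s)$ partition $X$, and whenever $\mu(X_s)>0$ the support $F(s):=\{i<k:s(i)=1\}$ lies in $\Ind(A)$ because $X_s\subseteq\bigcap_{i\in F(s)}T^{-i}A$. Since $\sum_s \mu(X_s)\,|F(s)|=\sum_{i=0}^{k-1}\mu(T^{-i}A)=k\mu(A)$, pigeonhole produces some $s$ with $\mu(X_s)>0$ and $|F(s)|\ge k\mu(A)$, so that $a_k\ge k\mu(A)$ in the notation preceding Lemma~\ref{independence density:lemma}, and hence $I(\{A\})\ge \mu(A)$.

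For part (2), the routine implications (i)$\Rightarrow$(v)$\Rightarrow$(iv)$\Rightarrow$(ii), (i)$\Rightarrow$(iii)$\Rightarrow$(ii), and (v)$\Rightarrow$(iii) all follow from the periodic decomposition $X=\bigsqcup_n X_n$ of periodic points by minimal period (if $\mu(A\cap X_n)>0$ then $n\Z_+\in \Ind(A)\cap \F_{\rs}$) combined with the inclusion $\F_{\rs}\subseteq \F_{\rms}$. The substantive direction is (ii)$\Rightarrow$(i), which I plan to prove by contrapositive: if the aperiodic part has positive $\mu$-measure, restrict to it and apply the Kakutani--Rokhlin tower lemma to produce a single set $A$ of positive measure whose first-return structure is so unbounded that every syndetic $F\subseteq \Z_+$ must fail the independence condition at some finite subset. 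The main obstacle will be promoting the $N$-dependent Rokhlin bases (which only defeat syndetic sets of gap bound $<N$) to a single set $A$ working against all syndetic candidates simultaneously, likely via a Kakutani skyscraper built from towers of unboundedly growing heights, with careful control of multi-fold intersections.

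For part (3), the plan is a compactness argument in $\{0,1\}^{\Z_+}$. For each $K\in\N$ reconsider the coding map $g_K:X\to\{0,1\}^{K}$: the hypothesis $\mu\!\bigl(\bigcup_{i=0}^{n-1}T^{-i}A\bigr)=1$ implies $g_K(x)$ lies a.e.\ in the set $\mathcal{P}_K$ of patterns whose every window of $n$ consecutive coordinates contains at least one $1$. Choose $s_K\in \mathcal{P}_K$ with $\mu(g_K^{-1}(s_K))>0$, embed $s_K$ in $\{0,1\}^{\Z_+}$ by arbitrary padding, and by compactness of $\{0,1\}^{\Z_+}$ extract a coordinate-wise convergent subsequence $s_{K_m}\to s^*$. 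A pigeonhole within each fixed length-$n$ window, applied along the subsequence, shows $s^*$ still has a $1$ in every window of length $n$, so that $F:=\{i:s^*(i)=1\}$ is syndetic with gap bound at most $n$. Finally, for any finite $J\subseteq F$ one has $s_{K_m}(j)=1$ for all $j\in J$ once $m$ is large, which yields $g_{K_m}^{-1}(s_{K_m})\subseteq \bigcap_{j\in J}T^{-j}A$ and therefore $\mu(\bigcap_{j\in J}T^{-j}A)\ge \mu(g_{K_m}^{-1}(s_{K_m}))>0$, so $F\in \Ind(A)\cap\F_{\rms}$.
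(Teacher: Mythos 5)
Parts (1) and (3) of your proposal are correct. For (1) you take essentially the paper's route: both arguments reduce to the bound $a_k\ge k\mu(A)$ (the paper integrates the a.e.\ inequality $\sum_{j=0}^{k-1}1_{T^{-j}A}\le a_k$; you average $|F(s)|$ over the cells of the coding partition, which is the same computation) and then invoke Lemma~\ref{independence density:lemma}. For (3) you take a genuinely different route: the paper notes that the hypothesis forces $\bigcap_{j\in\Z_+}\bigcup_{i=0}^{n-1}T^{-j-i}A$ to have full measure and then picks a point of it outside the countable union of the null sets among the $\bigcap_{j\in J}T^{-j}A$, whereas you run a compactness/diagonal argument on finite coding patterns. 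Both are complete; yours trades the null-set selection for a subsequence extraction, and either works.

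Part (2) is where the proposal has genuine gaps. First, the implication structure is inverted. The periodic decomposition yields only that a.e.\ periodicity implies each of the other four conditions (together with the trivial remarks that $n\Z_+$ is syndetic and that return along $n\Z_+$ implies syndetic return). It does not give your claimed chains (i)$\Rightarrow$(v), (iv)$\Rightarrow$(ii), (iii)$\Rightarrow$(ii): for instance, ``a.e.\ point of every $A$ returns to $A$ syndetically, hence $T$ is a.e.\ periodic'' is essentially the whole content of the theorem, not a routine consequence of splitting off the periodic part. Correspondingly, the direction you single out as substantive, (ii)$\Rightarrow$(i), is the routine one; the argument you actually sketch (assume the aperiodic part has positive measure and produce one $A$ defeating every syndetic set) is the contrapositive of (i)$\Rightarrow$(ii) --- the right target, but mislabeled. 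Second, that hard direction is only a plan: the obstacle you flag, namely passing from height-$N$ Rokhlin towers (which defeat only syndetic sets with gap bound $<N$) to a single set $A$ defeating all syndetic sets simultaneously, is exactly the crux, and you leave it unresolved. The paper's Theorem~\ref{notsyndetic} resolves it by a different device: choose a compact $K$ with $\mu(K)>1-\varepsilon$ disjoint from a countable forward-invariant set $W$ that meets the support of every ergodic component; then $\bigcup_{i=0}^{n}T^{-i}K$ misses $W$, so it cannot have full measure on any ergodic component, whence the set of points returning to $K$ with gaps at most $n$ is null for every $n$ at once, and a compact $A\subseteq K$ of positive measure avoiding this null union plus the usual null exceptional set has $\Ind(A)$ free of syndetic sets. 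Third, even granting the hard direction, your cycle does not close at the syndetic-return condition (iv): your only exit from (iv) is the non-routine (iv)$\Rightarrow$(ii). The paper instead proves (iv)$\Rightarrow$(i) directly --- take $x$ in the positive-measure set $\bigcap_{j}\bigcup_{i=0}^{n-1}T^{-j-i}A$ but outside the countable union of the null sets among the $\bigcap_{j\in J}T^{-j}A$; the syndetic return-time set of $x$ to $A$ is then an independence set --- and this step, or some substitute, is missing from your proposal.
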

\begin{proof}

(1).  For each $k\in \N$ let $a_k$ be defined as before Lemma~\ref{independence density:lemma}
for $\bfA=(A)$. Then $\sum^{k-1}_{j=0}1_{T^{-j}A}\le a_k$ a.e. on $X$. Thus
$k\mu(A)=\int_X\sum^{k-1}_{j=0}1_{T^{-j}A}\, d\mu\le a_k$. It follows that
$I(\bfA)\ge \mu(A)$. By Lemma~\ref{independence density:lemma} we can find
$F\in \Ind(A)$ with $d(F)=I(\bfA)\ge \mu(A)$.

(2). By Theorem~\ref{notsyndetic} the first condition implies the second one.
Clearly the second condition implies the third one and the fifth one,  the third one implies the first one,
and the fifth one implies the fourth one.
Thus it suffices to show that the fourth condition implies the first one.

Let $A\in\mathcal{B}$ with $\mu(A)>0$ and assume that
a.e. every point of $X$ returns to $A$ syndetically.
For each $n\in \N$ set $A_n=\bigcap_{j\in \Z_+}\bigcup^{n-1}_{i=0}T^{-j-i}A$.
Then
$\mu(X\setminus\bigcup_{n\in\N} A_n)=0$ and thus there exists $n\in\N$
with $\mu(A_n)>0$. Denote by $N$ the union of the measure zero ones among $\bigcap_{j\in J}T^{-j}A$
for $J$ running over nonempty finite subsets of $\Z_+$.
Then $\mu(N)=0$, and hence $\mu(A_n\setminus N)=\mu(A_n)>0$.
Take $x\in A_n\setminus N$.
Then there exists
$F\in\F_{\rms}$ such that for each $j\in F$, $T^jx\in A$.
For each nonempty finite subset $J$ of $F$, $x\in (\bigcap_{j\in J}T^{-j}A)\setminus N$,
thus $\mu(\bigcap_{j\in J}T^{-j}A)>0$.
That is, $F\in \Ind(A)$.

(3). The condition implies that $\mu(A_n)=1$. Thus the conclusion follows from the last paragraph.
\end{proof}

\section{Classes of topological $\F$-independence}
\subsection{General discussion}

In this subsection we characterize $\F_{\inf}$ (resp. $\F_{\ip}$)
independent t.d.s. in Theorem~\ref{weakmixing}, construct a
nontrivial topological K system with a unique minimal point in
Example~\ref{topological K unique minimal:example}, and discuss
$\F_{\rs}$-independence at the end.

A t.d.s. $(X, T)$ is said to be {\it (topologically) transitive} if for any nonempty open
subsets $U$ and $V$ of $X$, $N(U, V)$ is nonempty; it is called {\it weakly mixing} if
$(X\times X, T\times T)$ is transitive.
The equivalence of the conditions (1), (2) and (3) in the following theorem was proved
in \cite[Theorem 8.6]{KL2}.
Here we strengthen it by adding the conditions (4) and (5).

\begin{thm} \label{weakmixing}
For a t.d.s. $(X,T)$ the following are equivalent:
\begin{enumerate}
\item $(X,T)$ is weakly mixing.

\item $(X, T)$ is $\F_{\inf}$-independent of order $2$.

\item $(X,T)$ is $\F_{\inf}$-independent.

\item $(X, T)$ is $\F_{\ip}$-independent of order $2$.

\item $(X,T)$ is $\F_{\ip}$-independent.
\end{enumerate}
\end{thm}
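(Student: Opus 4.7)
The plan is to leverage the equivalence $(1)\Leftrightarrow(2)\Leftrightarrow(3)$ from \cite{KL2} and add the equivalences with $(4)$ and $(5)$ by proving one nontrivial implication. Since $\F_{\ip}\subseteq \F_{\inf}$, the inclusions $(5)\Rightarrow(3)$ and $(4)\Rightarrow(2)$ follow from the definition of $\F$-independence, while $(5)\Rightarrow(4)$ and $(3)\Rightarrow(2)$ are trivial from the definition. Combined with the equivalence $(1)\Leftrightarrow(2)\Leftrightarrow(3)$, the theorem reduces to showing $(1)\Rightarrow(5)$: that weak mixing implies $\F_{\ip}$-independence of all orders.

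To prove $(1)\Rightarrow(5)$, I fix $k\in\N$ and nonempty open sets $U_1,\ldots,U_k\subseteq X$, and inductively construct a strictly increasing sequence $\{a_n\}_{n\in\N}$ in $\N$ such that for each $m\in\N$ the partial-sum set
$$F_m:=\{a_{n_1}+\cdots+a_{n_r}:1\le n_1<\cdots<n_r\le m\}$$
is an independence set for $(U_1,\ldots,U_k)$. By Lemma~\ref{ye1}(2) this implies that the IP-set $\bigcup_m F_m$ is an independence set, yielding $\F_{\ip}$-independence of order $k$. The base case $m=1$ is immediate since $T^{-a_1}U_j\ne\emptyset$ for any $a_1\in\N$ by surjectivity of $T$.

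For the inductive step, observe that $F_{m+1}=F_m\cup\{a_{m+1}\}\cup(a_{m+1}+F_m)$, so any nonempty finite $J\subseteq F_{m+1}$ with coloring $s:J\to\{1,\ldots,k\}$ splits uniquely as $J_1\sqcup(a_{m+1}+J_3)$ with $J_1\subseteq F_m$ and $J_3\subseteq\{0\}\cup F_m$. Setting $V:=\bigcap_{j\in J_1}T^{-j}U_{s(j)}$ and $W:=\bigcap_{j\in J_3}T^{-j}U_{s(j+a_{m+1})}$ (an empty intersection being interpreted as $X$), the required condition $\bigcap_{j\in J}T^{-j}U_{s(j)}\ne\emptyset$ becomes $a_{m+1}\in N(V,W)$. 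By the inductive hypothesis and continuity of $T$, each such $V$ and $W$ is nonempty open, and as $F_m$ is finite there are only finitely many pairs $(V_i,W_i)$ arising this way. Since $(X,T)$ is weakly mixing of all orders, $(X^l, T\times\cdots\times T)$ is weakly mixing for every $l$, so the return-time set $N(\prod_i V_i,\prod_i W_i)$ in $X^l$ is thick; as it is contained in $\bigcap_i N(V_i,W_i)$, the latter is also thick. Choosing $a_{m+1}$ in this intersection with $a_{m+1}>\max F_m$ closes the induction.

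The main obstacle is the simultaneous fulfillment of the combinatorially many constraints at each inductive step. This is exactly handled by the classical fact that weak mixing is equivalent to weak mixing of all orders, so that every finite intersection of return-time sets $N(V_i,W_i)$ for nonempty open $V_i,W_i\subseteq X$ remains thick, allowing arbitrarily large choices of $a_{m+1}$ to be made at every stage.
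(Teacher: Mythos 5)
Your overall strategy --- reduce everything to $(1)\Rightarrow(5)$ and build an IP independence set inductively, using transitivity of all finite self-products of $(X,T)$ to satisfy the finitely many constraints at each stage --- is essentially the paper's own argument. However, your bookkeeping has a genuine gap in the treatment of the index $0$. In your inductive step the piece $J_3$ ranges over subsets of $\{0\}\cup F_m$, so for $W=\bigcap_{j\in J_3}T^{-j}U_{s(j+a_{m+1})}$ to be nonempty you need $\{0\}\cup F_m$, not merely $F_m$, to be an independence set; your inductive hypothesis does not provide this. The problem already bites at $m=1$: you take $a_1$ arbitrary (using only surjectivity of $T$), but for $J=\{a_2,\,a_1+a_2\}=a_2+\{0,a_1\}$ you need $W=U_i\cap T^{-a_1}U_j\neq\emptyset$ for all $i,j$, i.e. $a_1\in\bigcap_{i,j}N(U_i,U_j)$, which an arbitrary $a_1$ need not satisfy. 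Concretely, in the full shift on two symbols (which is weakly mixing) with $U_i=[00]$, $U_j=[11]$ and $a_1=1$, one has $U_i\cap T^{-1}U_j=\emptyset$, so $N(V,W)=\emptyset$ and no admissible $a_2$ exists; indeed no set of the form $\{1,a_2,a_2+1\}$ is an independence set for $(U_i,U_j)$.

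The repair is exactly what the paper does: strengthen the inductive hypothesis to ``$\{0\}\cup F_m\in\Ind(U_1,\dots,U_k)$'' and choose $a_1$ not arbitrarily but in $\bigcap_{(i,j)\in\{1,\dots,k\}^2}N(U_i,U_j)$, which is nonempty (indeed thick) because $(X\times\dots\times X,\,T\times\dots\times T)$ is transitive. With this change the decomposition becomes $\{0\}\cup F_{m+1}=(\{0\}\cup F_m)\cup\bigl(a_{m+1}+(\{0\}\cup F_m)\bigr)$, both $V$ and $W$ are nonempty open by the strengthened hypothesis, and the rest of your argument (finitely many pairs $(V_i,W_i)$, thickness of $\bigcap_i N(V_i,W_i)$ from weak mixing of all orders) goes through verbatim. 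The remaining parts of your proposal --- the trivial implications $(5)\Rightarrow(4)\Rightarrow(2)$ and $(5)\Rightarrow(3)\Rightarrow(2)$ via $\F_{\ip}\subseteq\F_{\inf}$, and citing Kerr--Li for $(1)\Leftrightarrow(2)\Leftrightarrow(3)$ --- are fine.
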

\begin{proof} It is clear that (5)$\Rightarrow$(4)$\Rightarrow$(2) and
(5)$\Rightarrow$(3)$\Rightarrow$(2). The implication (2)$\Rightarrow$(1)
follows from the fact that if for any nonempty open subsets $U$ and $V$ of $X$
one has $N(U, U)\cap N(U, V)\neq \emptyset$ , then $(X, T)$ is weakly mixing \cite[Lemma]{Pet70}.
(\cite[Lemma]{Pet70} was proved only for invertible t.d.s., but it is easy to modify the proof to make it
work for any t.d.s..)
Thus it suffices to
show that (1)$\Rightarrow$(5).

Now assume that $(X, T)$
is weakly mixing. Then each $(X\times \dots \times X, T\times \dots \times T)$ is
transitive \cite[Proposition II.3]{F}. Thus, for any $n\in\N$, if $U_1,\ldots, U_n$ and
$V_1,\ldots, V_n$ are nonempty open subsets of $X$, then
$\N\cap (\bigcap_{i=1}^n N(U_i,V_i))\not=\emptyset$. For any given nonempty
open subsets $U_1,\ldots, U_n$ of $X$, we are going to find an $\IP$-set $F$
in $\Ind(U_1,\ldots, U_n)$.

First there exists a $t_1\in \N$ such that $t_1\in \bigcap_{(i_1,i_2)\in
\{1,\ldots,n\}^2} N(U_{i_1},U_{i_2})$. Assume that $t_1,\dots, t_k$ in $\N$
are defined such that $\{a_1,\ldots,a_j\}:=\{0\}\cup \{t_{i_1}+\dots
+t_{i_l}: 1\le i_1<\dots<i_l\le k\}$ is in $\Ind(U_1,\ldots, U_n)$. Pick
$t_{k+1}\in \N$ such that
$$t_{k+1}\in \bigcap_{1\le i_m\le n, 1\le m\le 2j}
N(T^{-a_1}U_{i_1}\cap\ldots \cap T^{-a_j}U_{i_j},
T^{-a_1}U_{i_{j+1}}\cap\ldots \cap T^{-a_j}U_{i_{2j}}).$$ Then
$\{0\}\cup \{t_{i_1}+\ldots +t_{i_l}: 1\le i_1<\ldots<i_l\le
k+1\}=\{a_1, \dots, a_j\}\cup \{a_1+t_{k+1}, \dots, a_j+t_{k+1}\}$ is in
$\Ind(U_1,\dots, U_n)$. This implies that the IP-set
generated by the sequence $\{t_i\}_{i\in \N}$ is in $\Ind(U_1,\dots, U_n)$.
\end{proof}

Petersen \cite{Petersen} showed there exists a t.d.s. which is
strictly ergodic, strongly mixing, and has zero topological entropy.
Thus in such a system every tuple is $\F_{\ip}$-independent, while
no non-diagonal tuple is $\F_{\pd}$-independent.

A t.d.s. is called an {\it $E$-system} if it is transitive and
has an invariant Borel probability measure with full support;
it is called an {\it $M$-system} if it is transitive and the set of minimal
points is dense; it is called {\it totally transitive} if
$(X, T^k)$ is transitive for every $k\in \N$.
By Theorems~\ref{singleset} and \ref{weakmixing} we have

\begin{cor} Let $(X,T)$ be a t.d.s.. The following hold:
\begin{enumerate}
\item  If $(X,T)$ is $\F_{\pd}$-independent of order $2$, then it is an $E$-system.

\item If $(X,T)$ is $\F_{\ps}$-independent of order $2$,  then it is an $M$-system.

\item If $(X,T)$ is $\F_{\rs}$-independent of order $1$, then it has dense small
periodic sets. If it is $\F_{\rs}$-independent of order $2$,
then it is totally transitive,
and hence is disjoint from all minimal systems by
\cite[Theorem 3.4]{HY3}.
\end{enumerate}
\end{cor}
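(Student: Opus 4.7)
The plan is to reduce each of the three claims to a combination of the appropriate clause of Theorem~\ref{singleset} (applied to order $1$, which is automatic from order $2$) and a short extraction of return times from a single witnessing independence set in $\F$.

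For (1), since $\F_{\pd}\subseteq \F_{\pubd}$, $\F_{\pd}$-independence of order $1$ (forced by order $2$) entails $\F_{\pubd}$-independence of order $1$, and Theorem~\ref{singleset}(3) hands us an invariant Borel probability measure on $X$ with full support. For transitivity, I would fix nonempty open $U,V\subseteq X$, pick $F\in \Ind(U,V)\cap \F_{\pd}$, and observe that $F$ is infinite: for any $i<j$ in $F$, applying the independence condition to $J=\{i,j\}$ with $s(i)=1$, $s(j)=2$ gives $T^{-i}U\cap T^{-j}V\neq\emptyset$, whence $j-i\in N(U,V)$. Thus $(X,T)$ is transitive and carries a fully supported invariant measure, i.e.\ is an $E$-system. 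Part (2) is handled identically, replacing $\F_{\pd}$ by $\F_{\ps}$ (every piecewise syndetic set is infinite) and invoking Theorem~\ref{singleset}(4) in place of (3) to obtain $\overline{\AP(T)}=X$.

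For (3), the first assertion is a direct unpacking of Theorem~\ref{singleset}(5): $\F_{\rs}$-independence of order $1$ is $\QR(T)=X$, and since every nonempty open set contains a quasi regular point, this immediately gives dense small periodic sets. For the second assertion, fix nonempty open $U,V\subseteq X$; by the definition of $\F_{\rs}$ as the family generated by $\{n\Z_+:n\in\N\}$, there exists $n\in \N$ with $n\Z_+\in \Ind(U,V)$. Applying the independence condition to $J=\{0,kn\}$ with $s(0)=1$, $s(kn)=2$ for each $k\in \N$ yields $U\cap T^{-kn}V\neq\emptyset$, i.e.\ $U\cap (T^k)^{-n}V\neq\emptyset$, so $(X,T^k)$ is transitive for every $k\in \N$. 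Combined with the dense small periodic sets already guaranteed by order $1$, \cite[Theorem 3.4]{HY3} then yields disjointness from every minimal system.

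No step is a real obstacle: the substantive content lives in Theorem~\ref{singleset} and in the definition of $\F_{\rs}$. The single conceptual point is that a two-element subset of an independence set for a pair $(U,V)$ produces a return time in $N(U,V)$ by subtraction; this is the common mechanism converting $\F$-independence of order $2$ into a transitivity-type conclusion in all three parts.
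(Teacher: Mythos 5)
Your proof is correct, and it uses Theorem~\ref{singleset} exactly as the paper intends (order-$2$ independence trivially implies order~$1$ by taking $U_1=U_2$, and $\F_{\pd}\subseteq\F_{\pubd}$ handles part (1)). The one place you diverge is the transitivity component: the paper's stated proof routes this through Theorem~\ref{weakmixing}, i.e.\ order-$2$ independence in any family contained in $\F_{\inf}$ gives $\F_{\inf}$-independence of order $2$, hence weak mixing, hence (total) transitivity; you instead extract a return time $j-i\in N(U,V)$ directly from a two-element subset of an independence set, and in part (3) read off $kn\in N(U,V)$ for all $k$ from $n\Z_+\in\Ind(U,V)$. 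Your route is more elementary and self-contained (it avoids Petersen's lemma underlying Theorem~\ref{weakmixing}), while the paper's route yields the formally stronger intermediate conclusion that such systems are weakly mixing; both give exactly what the corollary asserts.
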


\begin{de} \label{topological K:def}
We say that a t.d.s. is {\it topological K} if it is $\F_{\pubd}$-independent.
\end{de}

By \cite[Theorem 8.3]{HY2} and \cite[Theorem 3.16]{KL2} a t.d.s.  is topological K
if and only if each of its finite covers by non-dense open subsets has positive topological entropy.

Next we show that there is an invertible  topological K system with only
one minimal point.
Recall that a t.d.s. $(X, T)$ is said to be {\it proximal} if the orbit closure of
every point in $(X\times X, T\times T)$ has nonempty intersection with the diagonal.
Following \cite{KL2} we shall refer to $\F_{\pubd}$-independent tuples of a t.d.s. as $\IE$-tuples.
To construct the example we need

\begin{lem} \label{unique minimal:lemma}
Let $(X, T)$ be a t.d.s.. We have:
\begin{enumerate}
\item Suppose that $(X, T)$ has  a transitive point
$x$. Then $T$ is topologically K if and only if for each $j\in\N$,
$(x, Tx, \dots, T^{j-1}x)$ is an $\IE$-tuple.

\item $(X,T)$ has only
one minimal point if and only if $(X,T)$ is proximal.
\end{enumerate}
\end{lem}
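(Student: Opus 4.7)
The plan is to prove parts (1) and (2) separately, each via two implications.

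For part~(1), the ``only if'' direction is an immediate unwinding of the definition: if $(X,T)$ is $\F_{\pubd}$-independent, then for any $j\in\N$ and any product neighborhood $U_0\times\cdots\times U_{j-1}$ of $(x,Tx,\dots,T^{j-1}x)$ the tuple $(U_0,\dots,U_{j-1})$ consists of nonempty open sets and hence admits an independence set in $\F_{\pubd}$ by hypothesis. For the converse, given nonempty open $V_1,\dots,V_k\subseteq X$, I will exploit transitivity of $x$ to choose $n_i\in\Z_+$ with $T^{n_i}x\in V_i$, set $m=\max_i n_i+1$, and pick neighborhoods $W_0,\dots,W_{m-1}$ of $x,Tx,\dots,T^{m-1}x$ with $W_{n_i}\subseteq V_i$. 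The IE-tuple hypothesis applied to the tuple $(x,Tx,\dots,T^{m-1}x)$ then supplies $F\in \Ind(W_0,\dots,W_{m-1})\cap\F_{\pubd}$, and the inclusions $W_{n_{s(j)}}\subseteq V_{s(j)}$ immediately promote $F$ to an element of $\Ind(V_1,\dots,V_k)\cap\F_{\pubd}$.

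For part~(2), the ``only if'' direction is the easier half. Uniqueness of the minimal point $x_0$ forces $Tx_0=x_0$ (since each $T^nx_0$ is again a minimal point), so $\{x_0\}$ is the unique minimal subset of $X$. For any $(x_1,x_2)\in X^2$ I will pick a minimal subset $N$ of the orbit closure of $(x_1,x_2)$ in $(X^2,T\times T)$; the projections $\pi_i(N)$ are minimal subsets of $X$ (a standard consequence of minimality of $N$) and therefore coincide with $\{x_0\}$. Hence $N=\{(x_0,x_0)\}$ lies on the diagonal, proving proximality.

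For the ``if'' direction of part~(2), I would argue in two stages. First, if $M_1,M_2$ are minimal subsets of $X$ and $y_i\in M_i$, proximality of $(y_1,y_2)$ produces a common subsequential limit $z\in M_1\cap M_2$, forcing $M_1=M_2$; hence $X$ has a unique minimal set $M$. The harder second stage is to show $|M|=1$. The restricted system $(M,T|_M)$ is itself minimal and proximal, and for any $y_1,y_2\in M$ the orbit closure $K\subseteq M\times M$ of $(y_1,y_2)$ contains a minimal subset $N'$; proximality places a diagonal point $(c,c)$ in $N'$, and minimality of $(M,T)$ together with minimality of $N'$ then forces $N'=\{(m,m):m\in M\}\subseteq K$. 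The final collapse $|M|=1$ is the classical fact that a minimal proximal system is trivial, which I plan to invoke (the standard proof, via the Ellis semigroup, shows that every element of the unique minimal ideal of $E(M)$ is a constant map, which forces $M$ to be a single fixed point). This last classical input is the main obstacle of the proof; everything else is bookkeeping about minimal subsets and orbit closures in product systems.
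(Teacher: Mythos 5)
Your proposal is correct, and part (1) is in substance the paper's argument: the paper disposes of (1) in one line by citing closedness of the set of $\IE$ $j$-tuples, and your explicit unpacking --- use transitivity to hit the $V_i$ at times $n_i$, take the $\IE$-tuple $(x,\dots,T^{m-1}x)$, and push an independence set for $(W_0,\dots,W_{m-1})$ down to the sub-selection $(W_{n_1},\dots,W_{n_k})\subseteq (V_1,\dots,V_k)$ --- is exactly what that one line encodes. (A cosmetic point: if two return times $n_i=n_{i'}$ coincide you need $W_{n_i}\subseteq V_i\cap V_{i'}$, still a nonempty open neighborhood of $T^{n_i}x$, so nothing breaks.) The ``only if'' half of (2) is the same trivial argument in both.

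Where you genuinely diverge is the ``if'' direction of (2). The paper does not invoke the classical triviality of minimal proximal systems at all: it applies proximality to the single pair $(x,Tx)$, whose orbit closure consists of limits of $(T^{n_k}x,\,T^{n_k+1}x)$, so any diagonal point $(y,y)$ in it automatically satisfies $Ty=y$ by continuity of $T$; then for any $z$ the orbit closure of $(y,z)$ is $\{y\}\times\overline{\{T^nz:n\in\Z_+\}}$, and meeting the diagonal forces $y$ into the orbit closure of $z$, so every minimal $z$ equals $y$. Your route --- first a unique minimal set $M$, then ``a minimal proximal system is trivial'' via the Ellis semigroup --- is valid, but it outsources the key collapse to a cited classical fact (whose standard proof still needs the commutation $Tu=uT$ to turn ``the minimal left ideal consists of constant maps'' into the existence of a fixed point), and your intermediate claim that $N'=\{(m,m):m\in M\}$ is an unused detour. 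The paper's version buys self-containedness and brevity; if you want to keep your structure, the cleanest repair is to note that proximality applied to $(y_1,Ty_1)$ already manufactures the fixed point directly, making the black box unnecessary.
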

\begin{proof}
(1). This follows from the fact that the set of $\IE$ $j$-tuples is
closed in $X^j$ for each $j\in \N$.

(2). The ``only if" part  is trivial. Assume that $(X, T)$ is proximal.
Take $x\in X$. Say, $(y, y)$ is in the intersection of the diagonal and the orbit closure of $(x, Tx)$.
Then $Ty=y$. Let $z\in X$. Then the orbit closures of $y$ and $z$ have nonempty intersection,
which of course has to be $\{y\}$. It follows that if $z$ is minimal, then $z=y$.
\end{proof}

For a t.d.s. $(X, T)$, recall its {\it natural extension} $(\tilde{X}, \tilde{T})$
defined as follows. $\tilde{X}$ is the closed subspace of $\prod_{n\in \N}X$ consisting
of $(x_1, x_2, \dots)$ with $T(x_{n+1})=x_n$  for all $n\in \N$, and
$\tilde{T}$ is defined as $\tilde{T}(x_1, x_2, \dots)=(T(x_1), x_1, x_2, \dots)$.
Note that $\tilde{T}$ is a homeomorphism and the projection $\pi: \tilde{X}\rightarrow X$ sending
$(x_1, x_2, \dots)$ to $x_1$ is a factor map.
It is well known that $(X, T)$ and $(\tilde{X}, \tilde{T})$ share many
dynamical properties. Here we need a special case.

\begin{lem} \label{natural extension:lem}
Let $(X, T)$ be a t.d.s.. The following are true:
\begin{enumerate}
\item Let $\F$ be a family and $k\in \N$. Then $(X, T)$ is $\F$-independent of
order $k$ if and only if $(\tilde{X}, \tilde{T})$ is so.

\item $(X, T)$ is proximal if and only if $(\tilde{X}, \tilde{T})$ is so.
\end{enumerate}
\end{lem}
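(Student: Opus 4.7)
The plan is to exploit the fact that, for every $n \in \N$, the coordinate projection $\pi_n \colon \tilde{X} \to X$, $(x_1, x_2, \dots) \mapsto x_n$, is itself a factor map. It is continuous and surjective (using surjectivity of $T$ to construct preimages), and the two cases $n = 1$ and $n \geq 2$ both yield the equivariance relation $\pi_n \circ \tilde{T} = T \circ \pi_n$ (for $n \geq 2$ both sides equal $\pi_{n-1}$). A second observation, crucial for part (1), is that every nonempty basic open subset of $\tilde{X}$ has the form $\pi_n^{-1}(V)$ for some $n \in \N$ and some nonempty open $V \subseteq X$: any finite list of coordinate constraints $x_m \in W_m$ for $m \leq n$ can be folded into the single constraint $x_n \in \bigcap_{m=1}^{n} T^{-(n-m)}(W_m)$ by means of the defining relation $x_m = T^{n-m}(x_n)$ inside $\tilde{X}$.

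For part (1), the ``only if'' direction is immediate from the standard transfer through the factor map $\pi_1$: for nonempty open $U_1, \dots, U_k \subseteq X$, any $F \in \F \cap \Ind(\pi_1^{-1}(U_1), \dots, \pi_1^{-1}(U_k))$ passes through the identity $\bigcap_j \tilde{T}^{-j}\pi_1^{-1}(U_{s(j)}) = \pi_1^{-1}(\bigcap_j T^{-j} U_{s(j)})$ and the surjectivity of $\pi_1$ to yield $F \in \F \cap \Ind(U_1, \dots, U_k)$. The ``if'' direction is the reverse: given nonempty open $\tilde{U}_1, \dots, \tilde{U}_k \subseteq \tilde{X}$, I would shrink each to a basic open set $\pi_{n_i}^{-1}(V_i)$ and promote to a common index $n = \max_i n_i$, so that each $\tilde{U}_i$ contains $\pi_n^{-1}(V_i')$ for a nonempty open $V_i' \subseteq X$. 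Applying $\F$-independence of $(X, T)$ to $(V_1', \dots, V_k')$ produces $F \in \F$ in the corresponding independence set, which transfers up via the same commutation identity.

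For part (2), the ``if'' direction is again a one-line factor map argument using $\pi_1$: any pair $(x, y) \in X^2$ lifts to some $(\tilde{x}, \tilde{y}) \in \tilde{X}^2$, whose proximality is pushed down to proximality of $(x, y)$ by $\pi_1 \times \pi_1$. For the ``only if'' direction I would apply Lemma~\ref{unique minimal:lemma}(2) twice: proximality of $(X, T)$ supplies a unique minimal point $y_0 \in X$, which is necessarily a fixed point of $T$, so that the constant sequence $\tilde{y}_0 = (y_0, y_0, \dots)$ lies in $\tilde{X}$ and is $\tilde{T}$-fixed. Any minimal subsystem $M \subseteq \tilde{X}$ satisfies $\pi_n(M) = \{y_0\}$ for every $n$ (the image of a minimal system under an equivariant continuous map is minimal in $X$), which forces $M = \{\tilde{y}_0\}$. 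Thus $(\tilde{X}, \tilde{T})$ has a unique minimal point, and the converse direction of Lemma~\ref{unique minimal:lemma}(2) concludes.

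No step here looks genuinely hard; the main technical point is the open-set bookkeeping in the ``if'' direction of (1), namely the reduction to basic opens of the shape $\pi_n^{-1}(V)$ and the repeated use of the equivariance $\tilde{T}^{-j} \pi_n^{-1}(V) = \pi_n^{-1}(T^{-j} V)$.
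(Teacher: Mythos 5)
Your proof is correct and takes essentially the same route as the paper: the heart of part (1) is the reduction of an arbitrary nonempty open subset of $\tilde{X}$ to a coordinate-determined set $\pi_n^{-1}(V)$ with a common index $n$, followed by lifting a point of $\bigcap_{j\in J}T^{-j}V'_{s(j)}$ to $\tilde{X}$ via the identity $\pi_n\circ\tilde{T}=T\circ\pi_n$, which is exactly the paper's argument, while for part (2) (which the paper dismisses as trivial) your argument via Lemma~\ref{unique minimal:lemma}(2) and the fact that a minimal subset of $\tilde{X}$ projects onto a minimal subset of $X$ is a valid way to fill in the details. The only blemish is that in part (1) your labels ``if'' and ``only if'' are interchanged relative to the statement as written (what you call the ``only if'' direction is the transfer from $\tilde{X}$ down to its factor $X$, i.e.\ the ``if'' implication), but both implications are correctly proved.
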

\begin{proof}
(1). The ``if" part follows from the fact that if a t.d.s. is
$\F$-independent of order $k$, then so is every factor. Suppose
that $(X, T)$ is $\F$-independent of order $k$. Let $U_1, \dots,
U_k$ be nonempty open subsets of $\tilde{X}$. Then there exist
nonempty open subsets $V_1, \dots, V_k$ of $X$ and $m\in \N$ such
that if $(x_1, x_2, \dots)$ is in $\tilde{X}$ and $x_m\in V_j$ for
some $1\le j\le k$, then $(x_1, x_2, \dots)$ is in $U_j$.

We claim that $\Ind(V_1, \dots, V_k)\subseteq \Ind(U_1, \dots, U_k)$.
Let $F\in \Ind(V_1, \dots, V_k)$, $J$ be a nonempty finite subset of $F$, and
$s\in \{1, \dots, k\}^J$. Then $\bigcap_{j\in J}T^{-j}V_{s(j)}\neq \emptyset$.
Take $y\in  \bigcap_{j\in J}T^{-j}V_{s(j)}$. We can find $\tilde{x}=(x_1, x_2, \dots)\in
\tilde{X}$  such that $x_m=y$. Then $\tilde{x}\in \bigcap_{j\in J}\tilde{T}^{-j}U_{s(j)}$.
Thus $F\in \Ind(U_1, \dots, U_k)$. This proves our claim.

Since $\F\cap \Ind(V_1, \dots, V_k)\neq \emptyset$, we get $\F\cap \Ind(U_1, \dots, U_k)\neq \emptyset$.
Therefore $(\tilde{X}, \tilde{T})$ is also
$\F$-independent of order $k$.
This proves the ``only if" part.

(2). This is trivial.
\end{proof}

For $p\ge 2$ let $\Lambda_p=\{0,1,\ldots,p-1\}$ with the discrete
topology, $\Sigma_p=\Lambda_p^{\Z_+}$ with the product topology and
$\sigma:\Sigma_p\rightarrow \Sigma_p$ be the shift. For $n\in \N$ and
$a=(a(1),a(2),\ldots, a(n))\in \Lambda^n_p$ (a block of length $n$),
let $|a|=n$, $\sigma(a)=(a(2),\ldots,a(n))$. We say that $a$ {\it
appears} in $x=(x(1),x(2),\ldots)\in\Sigma_p$ or $x\in \Lambda^m_p$
with $m\ge n$ if there is $j\in\N$ with $a=(x(j),x({j+1}),\ldots,
x({j+n-1}))$ (write $a<x$ for short) and we use $t^i$ to denote
${t\ldots t}$ ($i$ times). For $b=(b(1),\ldots,b(m))\in
\Lambda^m_p$, denote $(a(1),\ldots,a(n),b(1),\ldots,b(m))\in
\Lambda^{n+m}_p$ by $ab$. Denote $(ii\ldots)$ by $\bf i$, $0\le i\le p-1$.
We also need the
following lemma.  In view of \cite[Proposition 2]{B2} and
\cite[Theorem 7.3]{HY2} or \cite[Theorem 3.16]{KL2}, it is equivalent to
 \cite[Lemma 4.1]{HKY}. One can also prove it directly using $\IE$-pairs instead
 of entropy pairs in the proof of \cite[Lemma 4.1]{HKY}.

\begin{lem} \label{E unique minimal:lem}
There is an $E$-system $(Y,\sigma)$ contained in
$(\Sigma_3,\sigma)$ with a unique minimal point $\bf 0$ such that
$Y$ has  an $\IE$-pair $(x_1, x_2)$ in $[1]_Y\times [2]_Y$.
\end{lem}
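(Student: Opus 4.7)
The lemma follows from \cite[Lemma 4.1]{HKY}, which constructs a non-trivial topological K subshift $(Y,\sigma)$ of $\Sigma_3$ with $\mathbf{0}$ as its unique minimal point and with a non-diagonal entropy pair in $[1]_Y\times[2]_Y$. The plan is either to invoke that lemma and translate its entropy pair into an $\IE$-pair via the cited general correspondence, or to redo the construction directly in the language of the family $\F_{\pubd}$.

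\emph{Translation route.} Since topological K means $\F_{\pubd}$-independence by Definition~\ref{topological K:def}, in particular it is $\F_{\pubd}$-independence of order $1$, so Theorem~\ref{singleset}(3) gives $\mu\in\rM(Y,\sigma)$ with $\supp(\mu)=Y$. Combined with the transitivity implied by topological K, this makes $(Y,\sigma)$ an $E$-system. Then \cite[Proposition 2]{B2} combined with either \cite[Theorem 7.3]{HY2} or \cite[Theorem 3.16]{KL2} identifies the entropy pair supplied by HKY as a non-diagonal $\IE$-pair in $[1]_Y\times[2]_Y$.

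\emph{Direct route.} I would construct $z=0^{N_1}W_10^{N_2}W_2\cdots\in\Sigma_3$, with $N_k\to\infty$ and each $W_k\in\{1,2\}^{l_k}$ drawn from a family of ``balanced'' $\{1,2\}$-patterns of cardinality exponential in $l_k$; the balancing (e.g.\ a cap on the run-length of any single symbol) is what prevents the fixed points $\mathbf{1}$ and $\mathbf{2}$ from entering the orbit closure $Y:=\overline{\{\sigma^{n}z:n\in\Z_+\}}$. Since $N_k\to\infty$ and no long monochromatic nonzero runs occur in $z$, every $y\in Y$ has $\mathbf{0}$ in its orbit closure, so $\{\mathbf{0}\}$ is the unique minimal set (equivalently, by Lemma~\ref{unique minimal:lemma}(2), $(Y,\sigma)$ is proximal). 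I would schedule $N_k,l_k$ so that along a subsequence of scales $\{n_j\}$ the active windows occupy a fraction $\ge\delta>0$ and every admissible finite $\{0,1,2\}$-word appears in $z|_{[0,n_j]}$ with positive density; a weak-$*$ subsequential limit of the empirical measures $\frac{1}{n_j}\sum_{i=0}^{n_j-1}\delta_{\sigma^{i}z}$ is then a $\sigma$-invariant probability measure $\mu$ with $\supp(\mu)=Y$, giving the $E$-system property. Inside each $W_k$ one locates an independence set $F_k$ for $([1]_Y,[2]_Y)$ of size $\ge cl_k$ (using the richness of the balanced family); recentering $F_k$ at its minimum and passing to a weak-$*$ subsequential limit of its indicator yields $F\in\Ind([1]_Y,[2]_Y)$ with $BD^{*}(F)\ge c>0$, hence $F\in\F_{\pubd}$. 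Since $\F_{\pubd}=b\F_{\pd}$ has the dynamical Ramsey property by Example~\ref{bF:ex} and Proposition~\ref{same indep:prop}(2), Proposition~\ref{dynamical Ramsey to local:prop}(1) then produces the required $\IE$-pair $(x_1,x_2)\in[1]_Y\times[2]_Y$.

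The main obstacle is that the four desiderata pull apart: proximality is a \emph{tail} condition wanting $0$'s to dominate; the absence of extra fixed minimal points $\mathbf{1}$ and $\mathbf{2}$ forbids arbitrarily long monochromatic nonzero runs in $z$; while full support of $\mu$ and positive upper Banach density of the independence set both require a $\{1,2\}$-structure that is \emph{locally} rich on a positive fraction of large scales. The HKY trick is the run-length cap on the $W_k$'s, which keeps the family exponentially rich (enough for the $\IE$-pair argument to go through) while excluding $\mathbf{1},\mathbf{2}$ from $Y$; the remaining balancing of $N_k$ against $l_k$ is then essentially scheduling.
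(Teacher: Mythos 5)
Your proposal matches the paper exactly: the paper gives no proof of this lemma beyond the remark that, via the identification of entropy pairs with non-diagonal $\IE$-pairs (\cite[Proposition 2]{B2} together with \cite[Theorem 7.3]{HY2} or \cite[Theorem 3.16]{KL2}), it is equivalent to \cite[Lemma 4.1]{HKY}, and that one can alternatively redo that construction directly using $\IE$-pairs --- which are precisely your two routes. The only caveat is that \cite[Lemma 4.1]{HKY} produces an $E$-system with a non-diagonal entropy pair in $[1]_Y\times[2]_Y$ rather than a full topological K system, so in your translation route the $E$-system property should simply be read off from that lemma rather than derived from topological K; this does not affect the validity of the argument.
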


\begin{ex} \label{topological K unique minimal:example}
There exists a non-trivial invertible t.d.s. which is topological K
and has a unique minimal point.
\end{ex}
\begin{proof}
By Lemmas~\ref{unique minimal:lemma} and \ref{natural extension:lem} it suffices to show that there
exists a non-trivial t.d.s. which is topological K and proximal. We use
the idea in  the proof of Theorem 4.2 in \cite{HKY}. The main idea
is to construct a recurrent point $x\in \Sigma_2$ with the following
two properties:

\begin{itemize}

\item[{(I)}] for any $j\in \N$, $(x, \sigma(x), \dots, \sigma^{j-1}(x))$ is an
$\IE$-tuple of $(X, \sigma)$, where $X$ is the orbit closure of $x$, and

\item[{(II)}] for each $n\in \N$, $0^n$ appears in $x$
syndetically.
\end{itemize}

By Lemma~\ref{unique minimal:lemma} it is clear that $(X,\sigma)$ is
topological K
with a unique minimal point $\bf 0$.
First we give the detailed construction of the recurrent point $x$.

Let $(Y,\sigma)$ be the system constructed in Lemma~\ref{E unique minimal:lem} and
let $y$ be a transitive point of $Y$. By \cite[Theorem 3.18]{KL2}
for each $m\in \N$ we can find an $\IE$-tuple $(z_{m, 1}, \dots,
z_{m, m})$ of $Y$ with $z_{m, 1}, \dots, z_{m, m}$ pairwise
distinct and all in $[1]_Y\cup [2]_Y$. Then  we can find $t_m\in \N$
such that $z_{m, 1}[0, t_m], \dots, z_{m, m}[0, t_m]$ are
pairwise distinct, where $z[0, t]$ denotes $(z(0), \dots, z(t))$.
Define a map $f_m: \Lambda_3^{t_m+1}\rightarrow \Lambda_{m+1}$ by
$f_m(a)=j$ if $a=z_{m, j}[0, t_m]$ for some $1\le j\le m$ and
$f_m(a)=0$ otherwise.

Take $\phi:\N\rightarrow \N$ such that $\phi(k)\le k$ for each $k\in
\N$ and for each $m\in\N$, $\phi^{-1}(m)$ is infinite.

Set $A_1=(10)$, $n_1=|A_1|=2$. Set
$$C_{1,0}=0^{2n_1}, \; C_{1,1}=A_10^{n_1}.$$

Suppose that  $A_1,\ldots, A_k$, $C_{m,i}$ for $0<m\le k$ and $0\le i\le m$, and $n_1,\dots,
n_k$ are defined. We define inductively $A_{k+1}$, $C_{k+1,i}$ for
$i=0, 1,\dots,k+1$, and $n_{k+1}$.

Say, $m=\phi(k)$. Since $(Y,\sigma)$ has a unique minimal point $\bf
0$, there exists $\ell_k\in \N$ with $\ell_k\ge t_m$ such that
$0^{n_k}$ appears in $y$ with gaps bounded above by $\ell_k$. Set
$b_k=2\ell_k n_k$, and set
\begin{align*}
&A_{k+1}=A_k0^{n_k}C_{m, f_m(y[0, t_m])}C_{m, f_m(y[1,
t_m+1])}\ldots
C_{m, f_m(y[b_k-t_m, b_k])}0^{2n_k},\; n_{k+1}=|A_{k+1}|,\ \text{ and}\\
&C_{k+1, 0}=0^{2n_{k+1}}, C_{k+1,
i}=\sigma^{i-1}(A_{k+1})0^{i-1}0^{n_{k+1}} \text{ for
}i=1,2,\dots,k+1.
\end{align*}

It is clear that $x:=\lim_{k\to +\infty}A_k$ is a recurrent point of $\sigma$ in $\Sigma_2$.
Denote by $X$ the orbit closure of $x$ in $\Sigma_2$.
We claim that $x$ satisfies (I)
and (II).

\noindent {\bf (I).} Given $j\in \N$, we show that $(x, \sigma(x),
\dots, \sigma^{j-1}(x))$ is an $\IE$-tuple of $(X, \sigma)$. Suppose
that $V_0', V_1', \dots, V_{j-1}'$ are neighborhoods of $x,
\sigma(x), \dots, \sigma^{j-1}(x)$ respectively. Then there is some
$m\in \N$ with $m>j$ such that $V_i\subseteq V_i'$  for all $0\le i\le
j-1$, where $V_i:=[\sigma^i(A_m)0^i]_X$ for all $0\le i\le m-1$.

Since $(z_{m, 1}, \dots, z_{m, m})$  is an $\IE$-tuple of $Y$, there
exists some $d>0$ such that for any $n\in \N$ we can find a finite
subset $J\subseteq \Z_+$ with $|J|\ge n$ contained in an interval with
length at most $d|J|$
such that for any $s\in \{1, 2, \dots, m\}^J$ one has $\bigcap_{i\in
J}\sigma^{-i}U_{s(i)}\neq \emptyset$, where $U_j=(z_{m, j}[0,
t_m])_Y$ for $1\le j\le m$. Since $y$ is a transitive point of $Y$,
we have $\sigma^N(y)\in \bigcap_{i\in J}\sigma^{-i}U_{s(i)}$ for some
$N\in \Z_+$. Then $y_{[N+i, N+i+t_m]}=z_{m, s(i)}[0, t_m]$ for
all $i\in J$. Take $k\ge N+\max J+t_m$ with $\phi(k)=m$.
Then $b_k\ge k\ge N+i+t_m$ for all $i\in J$, and
$$A_{k+1}=A_k0^{n_k}C_{m, f_m(y[0, t_m])}C_{m, f_m(y[1, t_m+1])}\ldots
C_{m, f_m(y[b_k-t_m, b_k])}0^{2n_k}.$$ Note that $f_m(y[N+i,
N+i+t_m])=s(i)$ for all $i\in J$. Thus
$\sigma^{2n_k+2(N+i)n_m}(x)\in [C_{m, s(i)}]_X\subseteq V_{s(i)-1}$
for all $i\in J$. It follows that $\sigma^{2n_k+2Nn_m}(x)\in
\bigcap_{i\in 2n_mJ}\sigma^{-i}V_{\psi(i)}$ for the map
$\psi\in \{0,
1, \dots, m-1\}^{2n_mJ}$ defined
by $\psi(2n_mi)=s(i)-1$ for all $i\in J$. Therefore $2n_mJ$ is an independence set for
$(V_0, \dots, V_{m-1})$. Clearly $2n_mJ$ is contained in an interval
with length at most $2n_md|J|=2n_md|2n_mJ|$. Thus by Lemma~\ref{independence density:lemma} $(x,
\sigma(x), \dots, \sigma^{j-1}(x))$ is an $\IE$-tuple of $(X,
\sigma)$.

\noindent {\bf (II).} We now show that for each $n\in \mathbb{N}$,
$0^n$ appears in $x$ syndetically. It suffices to prove that for
each $k\in \mathbb{N}$, $0^{n_k}$ appears in $x$ syndetically with
gaps bounded above by $2b_k$.

Fix $k\in \N$. Say, $\phi(k)=m$. By the construction
$$A_{k+1}=A_k0^{n_k}C_{m, f_m(y[0, t_m])}C_{m, f_m(y[1, t_m+1])}\ldots
C_{m, f_m(y[b_k-t_m, b_k])}0^{2n_k}.$$

Note that $f_m(a)=0$ for every $a\in \Lambda_3^{t_m+1}$ with
$a(0)=0$. As $0^{n_k}$ appears in $y$ with gaps bounded above by $\ell_k$,
$0^{n_k}$ appears in $C_{m, f_m(y[0, t_m])}C_{m, f_m(y[1,
t_m+1])}\ldots C_{m, f_m(y[b_k-t_m, b_k])}$ with gaps bounded above by
$2n_m\ell_k\le 2n_k\ell_k=b_k$. Thus $0^{n_k}$ appears in $A_{k+1}$
with gaps bounded above by $b_k+n_k\le 2b_k$.

Assume that $0^{n_k}$ appears in $A_{\ell}$ with gaps bounded above by
$2b_k$, where $\ell\ge k+1$. Now we are going to prove that this is
also true for $\ell+1$. Set $m'=\phi(\ell)$. First note that
$$A_{\ell+1}=A_{\ell}0^{n_{\ell}}C_{m', f_{m'}(y[0, t_{m'}])}C_{m', f_{m'}(y[1, t_{m'}+1])}\ldots
C_{m', f_{m'}(y[b_{\ell}-t_{m'}, b_{\ell}])}0^{2n_{\ell}}.$$

If $m'\ge k+1$, then by the induction assumption and the
construction of $C_{m', i}$ we know that $0^{n_k}$ appears in
$A_{\ell+1}$ with gaps bounded above by $2b_k$. If $m'\le k$, then by the
induction assumption and the discussion similar to the case of
$A_{k+1}$, we know that $0^{n_k}$ appears in $A_{\ell+1}$ with gaps
bounded above by $2b_k$. Hence $0^{n_k}$ appears in $x$ syndetically with
gaps bounded above by $2b_k$,  as $x=\lim_{\ell\to +\infty} A_{\ell}$.
\end{proof}

\begin{de} We say that a t.d.s $(X,T)$ is {\it Bernoulli} if it is conjugate to
$(A^{\Z_+}, \sigma)$, where $A$ is a compact metrizable space with
$|A|\ge 2$ and $\sigma$ is the shift.
\end{de}

\begin{thm} A Bernoulli system  is
$\F_{\rs}$-independent.
\end{thm}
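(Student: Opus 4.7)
The plan is to exploit the product structure of the Bernoulli shift directly. Since $\F$-independence depends only on the conjugacy class, I may assume $(X,T)=(A^{\Z_+},\sigma)$. Fix $k\in\N$ and nonempty open subsets $U_1,\dots,U_k\subseteq A^{\Z_+}$; the target is to produce an $n\in\N$ with $n\Z_+\in \Ind(U_1,\dots,U_k)$. Since $\F_{\rs}$ is generated by $\{n\Z_+:n\in\N\}$, and independence sets are inherited by subsets (Lemma~\ref{ye1}(1)), this will show $\Ind(U_1,\dots,U_k)\cap\F_{\rs}\neq\emptyset$ for every $k$, giving $\F_{\rs}$-independence.

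First I would shrink each $U_i$ to a common-depth basic cylinder. By the definition of the product topology, for each $i$ there exist $m_i\in\N$ and nonempty open sets $W_0^{(i)},\dots,W_{m_i-1}^{(i)}\subseteq A$ with
$$\{x\in A^{\Z_+}:x(l)\in W_l^{(i)}\text{ for all }0\le l\le m_i-1\}\subseteq U_i.$$
Setting $m=\max_i m_i$ and declaring $W_l^{(i)}=A$ for $m_i\le l\le m-1$, all $k$ cylinders have the same depth $m$. Now set $n=m$. For any two distinct $j,j'\in n\Z_+$ one has $|j-j'|\ge n=m$, so the coordinate windows $[j,j+m-1]$ and $[j',j'+m-1]$ are disjoint.

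Next I would verify $n\Z_+\in\Ind(U_1,\dots,U_k)$ by direct construction. Given any nonempty finite $J\subseteq n\Z_+$ and any $s\in\{1,\dots,k\}^J$, define $x\in A^{\Z_+}$ by picking $x(j+l)\in W_l^{(s(j))}$ (nonempty) for each $j\in J$ and $0\le l\le m-1$, and picking $x(\ell)\in A$ arbitrarily at every remaining coordinate $\ell$. The pairwise disjointness of the windows $[j,j+m-1]$ for $j\in J$ means this prescription is consistent, and then $\sigma^j(x)\in U_{s(j)}$ for every $j\in J$, giving $\bigcap_{j\in J}\sigma^{-j}U_{s(j)}\neq\emptyset$.

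In short, there is essentially no obstacle here: the entire argument reduces to the elementary fact that on the product space $A^{\Z_+}$ one can freely and independently prescribe coordinates on disjoint index sets, which is precisely the defining feature of a Bernoulli shift. The only mild care required is to choose the arithmetic progression spacing $n$ at least as large as the common depth $m$ of the cylinders witnessing that the $U_i$ are open.
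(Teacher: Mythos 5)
Your proof is correct and follows essentially the same route as the paper's: replace each $U_i$ by a basic cylinder of a common depth $m$ and observe that the coordinate windows over an arithmetic progression of gap $m$ are pairwise disjoint, so that $m\Z_+$ is an independence set. The only difference is that you spell out the disjoint-window construction explicitly, whereas the paper states the conclusion $k\N\subseteq\Ind(U_1,\dots,U_n)$ directly.
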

\begin{proof} Let $(X,T)$ be a Bernoulli system. Without loss of generality we may assume
that $(X, T)=(A^{\Z_+}, \sigma)$ as above.
Let $U_1, \dots, U_n$ be nonempty open subsets of $X$ for some $n\in \N$.
Then there exist some $k\in \N$ and nonempty subsets
$A_{i, j}\subseteq A$ for $1\le i\le n$ and $0\le j<k$ such that
$U_i\supseteq A_{i,0}\times \ldots \times A_{i, k-1}\times
\prod_{\ell\ge k} A$ for all $1\le i\le n$. It follows that $k\N\subseteq
\Ind(U_1,\ldots, U_n)$. Thus $(X,T)$ is $\F_{\rs}$-independent.
\end{proof}

Recall that a t.d.s. $(X, T)$ is called {\it strongly mixing} if for any nonempty open
subsets $U$ and $V$ of $X$, $N(U, V)$ is a cofinite subset of $\Z_+$.
In \cite[Example 5]{B1} Blanchard constructed examples of invertible t.d.s. which
are $\F_{\rs}$-independent of order $2$ and are not strongly mixing. In fact,
the Property P defined in \cite{B1} is exactly the same as $\F_{\rs}$-independence of
order $2$. It is easily checked that the condition in \cite[Proposition 4]{B1} actually implies
$\F_{\rs}$-independence. Thus Blanchard's examples are actually $\F_{\rs}$-independent.
Thus $\F_{\rs}$-independence does not imply
strong mixing and hence does not imply Bernoulli.

A factor map $\pi: (X, T)\rightarrow (Y, S)$ between t.d.s. is said
to be an {\it almost one-to-one extension} if the set $\{x\in X:
\pi^{-1}(\pi(x))=\{x\}\}$ is dense in $X$.

For a sequence $K=\{k_n\}_{n\in \N}$ in $\N$ with $k_{n+1}$ being divisible
by $k_n$ for each $n\in \N$, the {\it adding machine} $(X_K, T_K)$
associated to $K$ is defined as follows. $X_K$ is the projective
limit of $\underset{n\to +\infty}\varprojlim \Z/k_n\Z$, as a
metrizable compact abelian group, and $T_K$ is the addition by $1$.

For a t.d.s. $(X,T)$, recall that  $x\in X$ is called a {\it regular
minimal point} \cite[Definition 3.38]{GH} if for each neighborhood
$U$ of $x$, there exists $k\in\N$ such that $N(x,U)\supseteq k\Z_+$.
It is known that if $x$ is a regular minimal point, then its orbit
closure is an almost one-to-one extension of some adding machine,
see for instance \cite[Proposition 3.5]{HY3}. Now we show

\begin{prop}\label{regular} Let $(X,T)$ be a minimal t.d.s.. The following
 are equivalent:

\begin{enumerate}
\item $(X,T)$ has dense small periodic sets.

\item $(X,T)$ is an almost one-to-one extension of some adding machine.

\item $X$ has  a regular minimal point.
\end{enumerate}
\end{prop}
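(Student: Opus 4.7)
The plan is to close the cycle $(3) \Rightarrow (2) \Rightarrow (1) \Rightarrow (3)$, while also recording the direct implication $(2) \Rightarrow (3)$.

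\emph{$(3) \Rightarrow (2)$} is the cited fact \cite[Proposition 3.5]{HY3}: the orbit closure of a regular minimal point is an almost one-to-one extension of an adding machine, and by minimality of $(X,T)$ this orbit closure is all of $X$. For \emph{$(2) \Rightarrow (1)$ and $(2) \Rightarrow (3)$}, let $\pi:(X,T) \to (X_K,T_K)$ be an almost one-to-one factor map, with $X_K = \underset{n}{\varprojlim}\,\Z/k_n\Z$. The level-$n$ cylinders of $X_K$ are clopen, satisfy $T_K^{k_n}C = C$, and form a neighborhood basis at every point. Given an injectivity point $x \in X$ (i.e.\ $\pi^{-1}(\pi(x))=\{x\}$) and a neighborhood $U$ of $x$, upper semicontinuity of $\pi^{-1}$ supplies a cylinder $C \ni \pi(x)$ at some level with $A := \pi^{-1}(C) \subseteq U$; then $T^{k_n}A = A$ yields (1), and $T^{jk_n}x \in A \subseteq U$ for all $j \ge 0$ yields (3).

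The heart of the proof is \emph{$(1) \Rightarrow (3)$}. I will produce a regular point $x$ as the unique element of $\bigcap_n A_n$, where $X = A_0 \supseteq A_1 \supseteq A_2 \supseteq \cdots$ are closed nonempty sets built inductively together with integers $1 = k_0 \mid k_1 \mid k_2 \mid \cdots$ such that $T^{k_n}A_n = A_n$ with $k_n$ the minimal period of $A_n$, $(A_n, T^{k_n})$ is minimal, and $\operatorname{diam}(A_n) \to 0$. Each such $A_n$ is then clopen in $X$: if $T^jA_n \cap A_n \ne \emptyset$ for some $0 < j < k_n$, then the intersection is a closed nonempty $T^{k_n}$-forward-invariant subset of $A_n$, hence equals $A_n$ by minimality of $(A_n, T^{k_n})$; so $A_n \subseteq T^jA_n$, whence $T^{k_n - j}A_n \subseteq T^{k_n}A_n = A_n$, and a second application of minimality forces $T^{k_n - j}A_n = A_n$, contradicting that $k_n$ is the minimal period. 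Thus the translates $T^jA_n$ for $0 \le j < k_n$ are pairwise disjoint, and $\bigcup_{j=0}^{k_n-1}T^j A_n = X$ by minimality of $(X,T)$, so $A_n$ is clopen.

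For the inductive step, given clopen $A_n$, pick an open $V \subseteq A_n$ of diameter $< 1/(n+1)$ (possible since $A_n$ is open), apply dense small periodic sets to $V$ to obtain closed nonempty $C \subseteq V$ with $T^\ell C \subseteq C$, set $L := \operatorname{lcm}(k_n,\ell)$, and take $A_{n+1}$ to be a closed $T^L$-minimal subset of $C$ (automatically $T^L A_{n+1} = A_{n+1}$). Letting $k_{n+1}$ be the minimal period of $A_{n+1}$, one has $k_{n+1} \mid L$; and $T^{k_{n+1}}A_{n+1} = A_{n+1} \subseteq A_n$ gives $T^{k_{n+1}}A_n \cap A_n \supseteq A_{n+1} \ne \emptyset$, so the disjoint-translate structure of $A_n$ forces $k_n \mid k_{n+1}$. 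The singleton $\{x\} = \bigcap_n A_n$ then satisfies $T^{jk_n}x \in T^{jk_n}A_n = A_n \subseteq U$ for every neighborhood $U$ of $x$ eventually containing $A_n$, so $x$ is regular. The principal technical content is the clopen/disjointness verification in the preceding paragraph, which crucially does not require $T$ to be invertible; a subsidiary point is that the small periodic set $C$ produced in the refinement automatically lies in $A_n$ because $V \subseteq A_n$, which is precisely why I establish clopenness of $A_n$ before attempting refinement.
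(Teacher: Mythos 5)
Your proof is correct. The core of it --- the inductive construction of nested closed sets $A_0\supseteq A_1\supseteq\cdots$ with $\diam(A_n)\to 0$, each periodic under some $T^{k_n}$ with $k_n\mid k_{n+1}$ and with $\{A_n,TA_n,\dots,T^{k_n-1}A_n\}$ a clopen partition of $X$ --- is exactly the construction the paper uses for $(1)\Rightarrow(2)$. The differences are in how the loop is closed and in what is proved versus cited. The paper delegates both the equivalence $(2)\Leftrightarrow(3)$ and the clopen-partition step (that the $T^{k}$-orbit closure of a point of a small periodic set is clopen with disjoint translates partitioning $X$) to the argument of \cite[Proposition 3.5]{HY3}, and then runs that same cited argument once more to extract the factor map onto an adding machine, i.e.\ it proves $(1)\Rightarrow(2)$. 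You instead prove the clopen-partition fact from scratch (your minimal-period argument, which correctly avoids invertibility of $T$), give a direct proof of $(2)\Rightarrow(1)$ and $(2)\Rightarrow(3)$ from the cylinder structure of the adding machine rather than citing it, and close the cycle via $(1)\Rightarrow(3)$ by exhibiting the point $\bigcap_n A_n$ as a regular minimal point --- thereby never having to build the factor map at all. What your route buys is self-containedness and the avoidance of the almost one-to-one extension machinery; what the paper's route buys is brevity, since \cite{HY3} already contains the technical content. Two small points you leave implicit but which are easy: pairwise disjointness of $T^iA_n$ and $T^jA_n$ for $0\le i<j<k_n$ follows from the case $i=0$ by applying $T^{k_n-i}$ (one cannot pull back by $T^{-i}$ since $T$ need not be invertible), and minimality of $(A_{n+1},T^{k_{n+1}})$ follows from $T^L$-minimality together with $k_{n+1}\mid L$.
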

\begin{proof}
By \cite[Proposition 3.5]{HY3} (2) and (3) are equivalent.
(3)$\Rightarrow$(1) is trivial.

(1)$\Rightarrow$(2). For any nonempty open subset $U$ of $X$, let
$B$ be a nonempty closed subset of $U$ with $T^kB\subseteq B$ for some
$k\in \N$. Take $x\in B$. Then the argument in the proof of
\cite[Proposition 3.5]{HY3} shows that the orbit closure  $A$ of $x$ under $T^k$
is a nonempty clopen subset of $U$ and there exists some
$\ell\in \N$ such that $\{A, TA, \dots, T^{\ell-1}A\}$ is a clopen
partition of $X$ and $T^{\ell}A=A$.

Fix a compatible metric on $X$.
Starting with some nonempty open subset $U$ of $X$ with $\diam(U)<1$,
we obtain $A$ and $\ell$ as above, and set $A_1=A$ and
$\ell_1=\ell$. Inductively, assuming that we have found subsets
$A_1\supseteq A_2\supseteq \dots\supseteq A_k$ and positive integers $\ell_1,
\ell_2, \dots, \ell_k$ such that $\diam(A_j)<1/j$, $\{A_j, TA_j,
\dots, T^{\ell_j-1}A_j\}$ is a clopen partition of $X$, and
$T^{\ell_j}A_j=A_j$ for all $1\le j\le k$. We shall find $A_{k+1}$
and $\ell_{k+1}$ with the same property. Let $U$ be a nonempty open
subset of $A_k$ with $\diam(U)<1/(k+1)$. We obtain $A$ and $\ell$ as
above, and set $A_{k+1}=A$ and $\ell_{k+1}=\ell$.

Now the argument in the proof of  \cite[Proposition 3.5]{HY3} shows
that $(X,T)$ is an almost one-to-one extension of some adding
machine.
\end{proof}

\subsection{Non-existence of non-trivial minimal $\F_{\rms}$-independent t.d.s.}

It was shown in \cite[Theorem 3.4]{HY2} that there exist non-trivial minimal
topological K systems (the existence of nontrivial
minimal u.p.e. systems was proved earlier by Glasner and Weiss \cite{GW1}, answering a
question of Blanchard \cite{B2}). As a contrast, we have

\begin{thm} \label{no minimal syndetic tds:thm}
There is no non-trivial minimal t.d.s. which is $\F_{\rms}$-independent
of order $2$.
\end{thm}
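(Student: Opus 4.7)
\emph{Proof plan.} Assume toward a contradiction that $(X,T)$ is a non-trivial minimal t.d.s.\ which is $\F_{\rms}$-independent of order $2$. The strategy is to couple the rich itinerary freedom supplied by syndetic independence with the rigid syndetic recurrence forced by minimality, and then to invoke the combinatorial result proved in the Appendix.

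\emph{Step 1 (a syndetic set carrying every $2$-coloring).} Since $|X|\ge 2$, I can choose disjoint nonempty open sets $U_1,U_2\subseteq X$ with $\overline{U_1}\cap\overline{U_2}=\emptyset$, together with smaller open sets $V_i$ satisfying $\overline{V_i}\subseteq U_i$. By $\F_{\rms}$-independence of order $2$ there is a syndetic $F\in \Ind(V_1,V_2)$. A standard compactness argument applied to the closed sets $\overline{V_1},\overline{V_2}$ yields, for every $s\in\{1,2\}^F$, a point $x_s\in X$ with $T^j x_s\in \overline{V_{s(j)}}\subseteq U_{s(j)}$ for all $j\in F$.

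\emph{Step 2 (uniform syndetic recurrence from minimality).} Because $(X,T)$ is minimal, for each nonempty open $W\subseteq X$ the cover $\{T^{-n}W\}_{n\ge 0}$ has a finite subcover, so there is an integer $K_W$ with $N(y,W)\cap [m,m+K_W)\ne\emptyset$ for every $y\in X$ and every $m\ge 0$. Applying this to $W=U_1,U_2$ and combining with disjointness of $U_1,U_2$ and Step~1, I obtain
\[
F\cap N(x_s,U_1)=s^{-1}(1),\qquad F\cap N(x_s,U_2)=s^{-1}(2),
\]
while each of $N(x_s,U_1),N(x_s,U_2)$ is syndetic in $\Z_+$ with gap bound $K:=\max(K_{U_1},K_{U_2})$ that is independent of $s$.

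\emph{Step 3 (combinatorial contradiction).} Steps~1 and~2 together assert: there is a syndetic $F\subseteq\Z_+$ such that \emph{every} $2$-coloring of $F$ is realized as the restriction to $F$ of a partition $\Z_+=A_1(s)\sqcup A_2(s)$ (together with a ``neutral'' part) in which both $A_i(s)$ are syndetic with the same uniform gap bound $K$. The combinatorial lemma of the Appendix is tailored precisely to rule this out: no syndetic $F$ can support every $2$-coloring as the trace of such a uniformly syndetic partition. Feeding $F$, $K$ and the family $\{x_s\}_{s\in\{1,2\}^F}$ into that lemma produces the desired contradiction.

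\emph{Main obstacle.} The entire difficulty of the theorem is concentrated in Step~3. From $\F_{\rms}\subseteq\F_{\pubd}$ one only concludes that $(X,T)$ is topological K, but Example~\ref{topological K unique minimal:example} exhibits a non-trivial \emph{proximal} topological K system, so topological K and even proximality alone are not sharp enough to force triviality. The extra rigidity coming from syndeticity---namely, uniformly bounded gaps on both color classes---is what breaks the situation, and extracting a contradiction from this uniformity is exactly the content of the combinatorial lemma in the Appendix.
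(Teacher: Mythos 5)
Your Steps 1 and 2 are fine as far as they go, but the proof has a genuine gap at Step 3, and it is not merely an omitted computation: the property you extract from minimality in Step 2 is too weak to yield any contradiction, and the Appendix result is not the statement you attribute to it. Theorem~\ref{infinite words:lemma} says nothing about syndetic partitions; it says that if at each position $n$ one forbids a set $A_n$ of at most $\ell$ words of length $m\ge 4\ell+2$, then some $x\in\Sigma_p$ avoids all of them. Moreover, the assertion you want in Step 3 --- ``no syndetic $F$ can support every $2$-coloring as the trace of a partition of $\Z_+$ into two uniformly syndetic classes'' --- is simply false. If $F$ has gaps of length up to $\ell\ge 2$, then for an arbitrary coloring $s$ of $F$ one can pad $\Z_+\setminus F$ by alternating the two colors there; both classes are then syndetic with a gap bound depending only on $\ell$, while the trace on $F$ is the arbitrary $s$. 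So uniform syndeticity of $N(x_s,U_1)$ and $N(x_s,U_2)$, which is all Step 2 delivers, cannot be contradicted by the freedom of colorings on $F$.

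The paper's argument uses minimality in a sharper way. After coding to a minimal subshift $X\subseteq\Sigma_2$ (Lemma~\ref{no minimal syndetic tds:lemma}), one fixes a single word $a$ of length $m\ell$ that occurs in some (hence, by minimality, in every) element of $X$, where $\ell$ bounds the gaps of the syndetic set $F=\{n_0<n_1<\dots\}$ and $m$ is as in Theorem~\ref{infinite words:lemma}. The point is that if $y\in X$ and $a$ occurs in $y$ starting at position $i$ with $n_{j-1}<i\le n_j$, then the trace $\bigl(y(n_j),y(n_{j+1}),\dots,y(n_{j+m-1})\bigr)$ is completely determined by $a$ and the offset $k=n_j-i+1\in\{1,\dots,\ell\}$; hence it lies in an explicit set $A_j\subseteq\Lambda_2^m$ with $|A_j|\le\ell$. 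Theorem~\ref{infinite words:lemma} then produces a coloring $x$ of $F$ whose every length-$m$ window avoids the corresponding $A_j$, while independence of $F$ realizes $x$ as the trace of some $y\in X$, and minimality forces $a$ to occur in $y$ --- a contradiction. This passage from ``$a$ occurs syndetically in every orbit'' to ``at most $\ell$ admissible traces per window'' is the bridge your Step 3 is missing, and without it the argument does not close.
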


To prove Theorem~\ref{no minimal syndetic tds:thm}, we need some
preparation. Crucial to the proof of Theorem~\ref{no minimal
syndetic tds:thm} is the following combinatorial result, which is
also of independent interest. We postpone its proof to the
Appendix. Recall the notion introduced before Lemma~\ref{E unique
minimal:lem}.

\begin{thm} \label{infinite words:lemma}
Let $p, \ell\in \N$ with $p\ge 2$.
For any integer $m\ge 4\ell+2$, given any sequence $\{A_n\}_{n\in \Z_+}$ of subsets
of $\Lambda_p^m$ with $|A_n|\le \ell$\
for each $n\in \Z_+$, there exists
$x\in \Sigma_p$ such that $x[n, n+m-1]\not \in A_n$ for every $n\in \Z_+$.
\end{thm}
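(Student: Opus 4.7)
The plan is to combine a finite-string construction with compactness. First, I would show that for every $N\in\N$ with $N\ge m$ there exists $y\in\Lambda_p^N$ such that $y[n,n+m-1]\notin A_n$ for all $0\le n\le N-m$. Granting this, the sets
\[
C_N=\{x\in\Sigma_p:x[n,n+m-1]\notin A_n\text{ for every }0\le n\le N\}
\]
are closed in $\Sigma_p$ (each is a finite intersection of clopen cylinders), non-empty (apply the finite claim at length $N+m$ and extend arbitrarily to an element of $\Sigma_p$), and decreasing in $N$; since $\Sigma_p$ is compact, $\bigcap_{N\ge 0}C_N\ne\emptyset$, and any $x$ in the intersection satisfies $x[n,n+m-1]\notin A_n$ for every $n\in\Z_+$.

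For the finite claim I would apply the symmetric Lov\'asz Local Lemma. Choose $y\in\Lambda_p^N$ uniformly at random and, for each $0\le n\le N-m$, let $B_n$ be the bad event $\{y[n,n+m-1]\in A_n\}$. Then $\Pr(B_n)\le \ell/p^m$, and $B_n$ depends only on the coordinates $n,n+1,\dots,n+m-1$, so it is mutually independent of the family $\{B_{n'}:|n-n'|\ge m\}$; in particular, each $B_n$ has dependency degree at most $2(m-1)$. The Local Lemma then guarantees a positive probability of avoiding every $B_n$ (and hence the existence of a good $y$) provided that
\[
e\cdot\frac{\ell}{p^m}\cdot(2m-1)\le 1, \quad\text{i.e.,}\quad p^m\ge e\ell(2m-1).
\]

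The main (mild) obstacle is to verify this inequality over the full regime $p\ge 2$, $m\ge 4\ell+2$. The case $\ell=0$ is trivial. For $\ell\ge 1$ the worst case is $p=2$, $m=4\ell+2$, where the inequality becomes $4\cdot 16^\ell\ge e\ell(8\ell+3)$; this is checked by hand for small $\ell$ and follows from the exponential-versus-polynomial comparison for larger $\ell$. Monotonicity in $m$ is automatic once $m\ge 2$: replacing $m$ by $m+1$ at least doubles the left-hand side (since $p\ge 2$) while scaling the right-hand side by $(2m+1)/(2m-1)\le 2$, so the inequality persists for all $m\ge 4\ell+2$. The compactness reduction and the LLL application itself are otherwise routine. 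An alternative proof, which would reflect the specific constant $4\ell+2$ more transparently, is an inductive construction of good finite strings that tracks, at each stage $N$, the set of length-$(m-1)$ suffixes realizable at the end of some good prefix of length $N$; this direct combinatorial route is doable but looks more delicate than the LLL argument above, which is why I would take the LLL approach as the primary one.
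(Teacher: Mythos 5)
Your proof is correct, and it takes a genuinely different route from the paper's. The compactness reduction to finite words coincides with the paper's opening step (the paper likewise constructs, for each $n$, a word good up to position $n$ and passes to a limit point in $\Sigma_p$), but where you invoke the symmetric Lov\'asz Local Lemma for a uniformly random word --- bad-event probability at most $\ell/p^m$, dependency degree at most $2(m-1)$, so that $e\,\ell\,(2m-1)\le p^m$ suffices --- the paper runs an elementary and rather intricate combinatorial induction: it tracks the set $B_n\subseteq\Lambda_p^m$ of windows that cannot occur at position $n$ in any good prefix, the set $C_n\subseteq\Lambda_p^{m-1}$ of suffixes all of whose one-letter left extensions lie in $B_n$, decomposes $C_n$ into maximal cylinders, and derives a contradiction from the assumption that $C_n$ ever contains a full cylinder $y\Lambda_p^{d'}$ with $d'\ge d$ for $d=\ell+1$. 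Your numerical verification of the LLL hypothesis over the whole regime $p\ge 2$, $m\ge 4\ell+2$ is sound (the worst case $4\cdot 16^{\ell}\ge e\ell(8\ell+3)$ holds already at $\ell=1$, where it reads $64\ge 11e$, and improves as $\ell$ grows; your monotonicity-in-$m$ remark handles the rest), and in fact your argument proves a strictly stronger statement: it only needs $p^m\ge e\ell(2m-1)$, i.e.\ $m$ of order $\log_p\ell$ rather than linear in $\ell$. What the paper's argument buys is self-containedness --- no probabilistic tool is used, in keeping with the purely combinatorial appendix --- while your route is shorter and quantitatively sharper; both are complete proofs of the stated theorem.
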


We remark that under the conditions of Theorem~\ref{infinite
words:lemma}, the set $\{x\in \Sigma_p: x[n, n+m-1]\not \in A_n
\mbox{ for all } n\in \Z_+\}$ is small in both the topological and
measure-theoretical senses: it is a closed subset of $\Sigma_p$
with empty interior and has measure $0$ for the product measure on
$\Sigma_p$ associated to any probability vector $(t_0, \dots,
t_{p-1})$ with $\sum_{j=0}^{p-1}t_j=1$ and $t_j>0$ for all $0\le
j\le k-1$. The following lemma is important for the proof of
Theorem \ref{no minimal syndetic tds:thm} and also can be applied
to show that an $\F_{\rms}$-independent t.d.s. is disjoint from all
minimal t.d.s. \cite{DSY}.

\begin{lem} \label{no minimal syndetic tds:lemma}
For every minimal subshift $X\subseteq \Sigma_2$, $\Ind([0]_X,[1]_X)$ does not contain any syndetic set.
\end{lem}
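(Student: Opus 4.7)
The plan is to argue by contradiction. Suppose $F \in \Ind([0]_X, [1]_X)$ is syndetic with gap bound $N$ (so $F \cap [a, a+N-1] \neq \emptyset$ for every $a \geq 0$). The strategy is to combine the abundance of elements of $X$ forced by the independence property of $F$ with the minimality of $X$ and the combinatorial Theorem \ref{infinite words:lemma} to reach an impossible configuration.

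First, from the definition of independence (together with a standard compactness/diagonal argument in $X$), I would show that for every $s \in \{0,1\}^F$ there exists $y_s \in X$ with $y_s(f) = s(f)$ for all $f \in F$. Taking $s \equiv 0$ and $s \equiv 1$, and using that $F$ meets every interval of length $N$, one gets $y \in X$ containing a $0$ (respectively a $1$) in every length-$N$ window. Consequently $0^N, 1^N \notin L(X)$, so $X$ sits inside the SFT of sequences avoiding $0^N$ and $1^N$.

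Next, fix any $x \in X$; by minimality, $L(x) = L(X)$ and $x$ is uniformly recurrent. Select parameters $\ell \in \N$ and $m \geq 4\ell + 2$. For each $n \geq 0$, I would define a set $A_n \subseteq \Lambda_2^m$ with $|A_n| \leq \ell$ which encodes a combinatorial obstruction built from the pattern $F \cap [n, n+m-1]$ and from finitely many length-$m$ subwords of $x$ located at positions related to $n$. Applying Theorem \ref{infinite words:lemma} produces $z \in \Sigma_2$ with $z[n, n+m-1] \notin A_n$ for every $n$. By the independence of $F$ (Step 1 applied to $s = z|_F$), pick $y \in X$ with $y|_F = z|_F$. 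The $A_n$'s are to be designed so that the combination of $z[n, n+m-1] \notin A_n$ and $y|_F = z|_F$ forces $y$ either to contain the forbidden word $0^N$ or $1^N$, or to fail uniform recurrence, in either case contradicting minimality.

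The main obstacle is the exact definition of the sets $A_n$: they must be small enough, namely $|A_n| \leq \ell \leq (m-2)/4$, to legitimately invoke Theorem \ref{infinite words:lemma}, yet strong enough that the resulting witness $z$, once pulled into $X$ via $F$-independence, exposes one of the contradictions above. Calibrating these two competing requirements is the combinatorial heart of the argument; it is precisely here that Theorem \ref{infinite words:lemma} (rather than a naive K\"onig-style construction, which would demand far larger forbidden sets) is indispensable, and it is also the reason the statement is restricted to syndetic (as opposed to merely piecewise syndetic or positive-density) sets, where the uniform gap bound $N$ lets one control $|A_n|$ uniformly in $n$.
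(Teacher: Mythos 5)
Your overall framework is the same as the paper's: assume a syndetic $F\in\Ind([0]_X,[1]_X)$, build sets $A_n\subseteq\Lambda_2^m$ with $|A_n|\le\ell$, invoke Theorem~\ref{infinite words:lemma} to get $z$ avoiding them, pull $z|_F$ back into a point $y\in X$ by independence (your Step 1 compactness argument is correct and is exactly what the paper does implicitly), and contradict minimality. But the proof is not complete: you explicitly leave the definition of the $A_n$ as ``the main obstacle'' and ``the combinatorial heart of the argument,'' and that is precisely the step that carries the whole proof. Without it there is no argument, only a plan. Your Step 1 conclusion that $0^N,1^N\notin L(X)$ is valid (using that minimality forces all points of $X$ to have the same language), but it is a detour: the actual contradiction does not go through forbidden constant blocks or a failure of uniform recurrence of $y$ in that sense.

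Here is the missing construction. Write $F=\{n_0<n_1<\cdots\}$ and let $\ell=\max_j(n_{j+1}-n_j)$ be the gap bound. Fix \emph{any} word $a\in\Lambda_2^{m\ell}$ appearing in some (hence, by minimality, every) element of $X$. For each $j$ let $A_j$ be the set of ``traces'' of $a$ read along $F$ starting near $n_j$, namely the words $\bigl(a(k),\,a(k+n_{j+1}-n_j),\,\dots,\,a(k+n_{j+m-1}-n_j)\bigr)$ for $1\le k\le\ell$; since $n_{j+m-1}-n_j\le(m-1)\ell$, these indices stay within $a$, and clearly $|A_j|\le\ell$, as required by Theorem~\ref{infinite words:lemma}. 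Now take $x$ with $x[j,j+m-1]\notin A_j$ for all $j$ and $y\in X$ with $y(n_j)=x(j)$. By minimality $a$ occurs in $y$ at some position $i\ge n_1$; choosing $j$ with $n_{j-1}<i\le n_j$ and setting $k=n_j-i+1\in\{1,\dots,\ell\}$, one finds $x(s)=y(n_s)=a(k+n_s-n_j)$ for $j\le s\le j+m-1$, i.e., $x[j,j+m-1]\in A_j$ --- the contradiction. So the sets $A_n$ depend only on one fixed admissible word $a$ and the gap structure of $F$, not on ``subwords of $x$ at positions related to $n$'' as you suggest; the uniform bound $|A_n|\le\ell$ comes solely from the syndeticity of $F$ (there are only $\ell$ possible phases $k$), which is indeed where syndetic, rather than piecewise syndetic, is used.
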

\begin{proof}
We argue by contradiction. Assume that $X\subseteq \Sigma_2$ is a
minimal subshift and $\Ind([0]_X,[1]_X)$ contains a syndetic set
$F$. Say, $F=\{n_0<n_1<\dots\}$ with $\ell=\max_{j\in
\Z_+}(n_{j+1}-n_j)$. Let $m$ be as in Theorem~\ref{infinite
words:lemma} for $p=2$ and $\ell$. Take $a\in \Lambda_2^{m\ell}$
such that $a$ appears in some element of $X$. For each $j\in
\Z_+$, set $A_j$ to be the subset of $\Lambda_2^m$ consisting of
elements of the form $(a(k), a(k+n_{j+1}-n_j), a(k+n_{j+2}-n_j),
\dots, a(k+n_{j+m-1}-n_j))$ for $1\le k\le \ell$. Then $|A_j|\le
\ell$ for all $j\in \Z_+$. By Theorem~\ref{infinite words:lemma}
we can find $x\in \Sigma_2$ such that $x[j, j+m-1]\not \in A_j$
for every $j\in \Z_+$. Since $F\in \Ind([0]_X, [1]_X)$, we can
find $y\in X$ with $y(n_j)=x(j)$ for all $j\in \Z_+$. As $X$ is
minimal, there exists some $i\ge n_1$ such that $y[i,
i+m\ell-1]=a$. Say, $n_{j-1}<i\le n_{j}$. Set $k=n_{j}-i+1$. Then
$x(s)=y(n_s)=a(k+n_{s}-n_{j})$ for all $j\le s\le j+m-1$, which
contradicts that $x[j, j+m-1]\not \in A_j$.
\end{proof}

We are ready to prove Theorem~\ref{no minimal syndetic tds:thm}.

\begin{proof}[Proof of Theorem~\ref{no minimal syndetic tds:thm}]
We shall show that if $(Y, S)$ is a minimal t.d.s., and $V_0, V_1$ are disjoint closed subsets of $X$ with
nonempty interior, then $\Ind(V_0, V_1)$ does not contain any syndetic set.

It is well known that we can find a minimal t.d.s. $(X_1, T_1)$
and  a factor map $\pi: (X_1, T_1)\rightarrow (Y,S)$ such that
$X_1$ is a closed subset of a Cantor set (see for eample
\cite[page 34]{BGH}). It is easy to see that $\Ind(V_0,
V_1)=\Ind(\pi^{-1}(V_0), \pi^{-1}(V_1))$. Write $X$ as the
disjoint union of clopen subsets $U_0$ and $U_1$ such that
$U_j\supseteq \pi^{-1}(V_j)$ for $j=0, 1$. Then $\Ind(V_0,
V_1)\subseteq \Ind(U_0, U_1)$.

Define a coding $\phi:X_1\ra \Sigma_2$ such that for each
$x\in X_1$, $\phi(x)=(x_0,x_1,\ldots)$, where $x_i=j$ if
$T_1^i(x)\in U_j$ for all $i\in\Z_+$. Then $X=\phi(X_1)$ is a
minimal subshift contained in $\Sigma_2$ and $\phi:X_1\rightarrow X$ is a factor map. It is easy to verify that
$\Ind(U_0, U_1)\subseteq \Ind([0]_X,[1]_X)$.

By Lemma~\ref{no minimal syndetic tds:lemma} we know that
$\Ind([0]_X, [1]_X)$ does not contain any syndetic set. Then
$\Ind(V_0, V_1)$ does not contain any syndetic set either.
\end{proof}


\subsection{Finite product}

In this subsection we investigate the question for which families
$\F$ the product of finitely many $\F$-independent t.d.s. remains
$\F$-independent.

It is known that if $\F=\F_{\pd}$ the question has a positive
answer \cite[Theorem 8.1]{HY2}
\cite[Theorem 3.15]{KL2}.
We now show that the question has a positive answer for
$\F=\F_{\rs},\F_{\ps}$. It is clear that
$$\F_{\rs}\subseteq \F_{\cen}\subseteq \F_{\ps} \subseteq \F_{\pubd}.$$

We need the following lemma. It is also needed for the proof of
Theorem~\ref{topological K to strongly mixing:thm} later.

\begin{lem} \label{inf-B}
For any $d>0$, $k\in \N$, and finite subset $F\subseteq \Z_+$ with
$d|F|> k$, there exists $N=N(d,k,F)\in \N$ such that for any
nonempty finite interval $I\subseteq \Z_+$ and $S\subseteq I$ with
$\frac{|S|}{|I|}\ge d$ and $|I|\ge N$ one has $|S\cap (F+p)|\ge k$
for some $p\in \Z$.
\end{lem}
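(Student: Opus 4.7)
The plan is to prove this by a straightforward averaging/pigeonhole argument on translates of $F$ inside $I$.

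First I would normalize: after a translation we may assume $I=[0,L-1]$ with $L=|I|$, and let $M=\max F$. For every integer $p$ with $0\le p\le L-1-M$ the translate $F+p$ is contained in $I$; there are $L-M$ such values of $p$ (so I will at the end take $N>M$, which I can do freely). My intention is to show that the average of $|S\cap (F+p)|$ over these $p$ already exceeds $k-1$, and then conclude by the observation that, since the quantities $|S\cap(F+p)|$ are integers, an average strictly greater than $k-1$ forces at least one of them to be $\ge k$.

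The key computation is the double count
\begin{equation*}
\sum_{p=0}^{L-1-M}|S\cap (F+p)|=\sum_{f\in F}\bigl|\{p\in[0,L-1-M]:p+f\in S\}\bigr|=\sum_{f\in F}|S\cap [f,L-1-M+f]|.
\end{equation*}
Each window $[f,L-1-M+f]$ is contained in $I$ and differs from $I$ by at most $M$ points, so $|S\cap[f,L-1-M+f]|\ge |S|-M\ge dL-M$. Hence the sum is at least $|F|(dL-M)$, and dividing by the number $L-M$ of translates gives
\begin{equation*}
\max_{0\le p\le L-1-M}|S\cap (F+p)|\ge \frac{|F|(dL-M)}{L-M}.
\end{equation*}

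As $L\to\infty$ the right-hand side tends to $d|F|$, which by hypothesis is strictly greater than $k$. Therefore, solving the linear inequality $|F|(dL-M)>(k-1)(L-M)$ explicitly, any
\begin{equation*}
N>\max\Bigl\{M,\ \frac{(|F|-k+1)M}{d|F|-(k-1)}\Bigr\}
\end{equation*}
works: for such $L=|I|\ge N$ the average exceeds $k-1$, so some $p\in[0,L-1-M]$ satisfies $|S\cap(F+p)|\ge k$. There is essentially no obstacle here; the only small care needed is the integer-rounding step, which I handle by noting that an integer-valued family with average strictly greater than $k-1$ must attain some value $\ge k$.
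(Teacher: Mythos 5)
Your proof is correct, and it is essentially the same double-counting/pigeonhole argument as the paper's: the paper counts the $|F|\cdot|S|$ pairs $(j,s)\in F\times S$ against the at most $|I|+\max F$ possible differences $p=s-j$ to find a $p$ with $|S\cap(F+p)|\ge \frac{|S||F|}{|I|+\max F}\ge\frac{d|F|}{1+(\max F)/|I|}\ge k$, while you run the same count restricted to translates with $F+p\subseteq I$. Both yield the same asymptotic average $d|F|>k$, so the two write-ups differ only in bookkeeping.
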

\begin{proof} Take $N\in \N$ such that $\frac{d|F|}{1+(\max F)/N}\ge k$.
For each $j\in F$ the set $S-j$ is contained in  $[\min I-\max F,
\max I]\subseteq \Z$. Then we can find some $p\in  [\min I-\max F,
\max I]$ such that $p$ is contained in $S-j$ for at least
$\frac{|S|\cdot |F|}{|[\min I-\max F, \max I]|}$ $j$'s in $F$. Set
$W=\{j\in F: p\in S-j\}$. Then $(W+p)\subseteq S\cap (F+p)$ and
$$ |W|\ge    \frac{|S|\cdot |F|}{|[\min I-\max F,
\max I]|}=\frac{|S|\cdot |F|}{|I|+\max F}\ge \frac{d|F|}{1+(\max
F)/|I|}\ge k .$$
\end{proof}

We need the following simple lemma. For a subset
 $K$ of $\Z_+$, denote by $X_K$ the set of limit points of the sequence
 $\{\sigma^n1_K\}_{n\in \Z_+}$ in $\{0, 1\}^{\Z_+}$, where $\sigma$ denotes the shift map
 on $\{0, 1\}^{\Z_+}$. Note that $(X_K, \sigma)$ is a t.d.s..

\begin{lem}\label{productminimal} The following statements hold:
\begin{enumerate}

\item Let $S_1, S_2\in \F_{\pubd}$. Then there are two
subsets $K_1, K_2$ of $\Z_+$
such that
$1_{K_i}\in X_{S_i}$, $i=1, 2$,
and $K_1\cap K_2\in
\F_{\pud}$.

\item Let $S_1, S_2\in \F_{\ps}$. Then there are
two subsets $K_1, K_2$
of $\Z_+$
such that
$1_{K_i}\in X_{S_i}$, $i=1, 2$,
and $K_1\cap K_2\in
\F_{\rms}\cap \F_{\cen}$.

\item Let $S_1, S_2\in \F_{\rs}$. Then there are two
subsets $K_1, K_2$ of $\Z_+$
such that
$1_{K_i}\in X_{S_i}$, $i=1, 2$,
and $K_1\cap K_2\in
\F_{\rs}$.
\end{enumerate}
\end{lem}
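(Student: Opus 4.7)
The three parts share a common template. We produce a point $(z_1,z_2)\in\{0,1\}^{\Z_+}\times\{0,1\}^{\Z_+}$ with $z_i\in X_{S_i}$ for $i=1,2$, and then set $K_i=\{n\ge 0:z_i(n)=1\}$. With this choice, $K_1\cap K_2=\{n\ge 0:(\sigma\times\sigma)^n(z_1,z_2)\in[1]\times[1]\}$ is the return-time set of $(z_1,z_2)$ into $[1]\times[1]$ under the diagonal shift, and the family-membership of $K_1\cap K_2$ will be read off from the dynamical status of $(z_1,z_2)$.

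For part (3), compactness suffices. Writing $S_i\supseteq n_i\Z_+$ and putting $n=\operatorname{lcm}(n_1,n_2)$, every shift $\sigma^{nj}1_{S_i}$ still has a $1$ at each multiple of $n_i$; extracting a diagonal subsequence $j_k\to\infty$ for which $\sigma^{nj_k}1_{S_i}\to 1_{K_i}$ holds for both $i$, each limit inherits $K_i\supseteq n_i\Z_+$, so $K_1\cap K_2\supseteq n\Z_+\in\F_{\rs}$. For part (1), invariant measures supply the point. A Krylov--Bogolyubov averaging along intervals witnessing $BD^*(S_i)$ yields a shift-invariant Borel probability measure $\mu_i$ supported on $X_{S_i}$ with $\mu_i([1])\ge BD^*(S_i)>0$, and the product $\mu_1\times\mu_2$ on $X_{S_1}\times X_{S_2}$ is $\sigma\times\sigma$-invariant with $(\mu_1\times\mu_2)([1]\times[1])>0$. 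Applying Birkhoff's pointwise ergodic theorem to $1_{[1]\times[1]}$, the time averages converge $\mu_1\times\mu_2$-a.e.\ to a non-negative function whose integral is positive, and therefore are strictly positive on a positive-measure set; any $(z_1,z_2)$ from that set lies in $X_{S_1}\times X_{S_2}$ and makes $K_1\cap K_2$ a set of positive Ces\`aro density, in particular a member of $\F_{\pud}$.

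Part (2) is the delicate case and uses a minimal idempotent $u$ of $\beta\Z_+$. By Example~\ref{central vs ps:ex}, for each $i$ the orbit closure of $1_{S_i}$ contains a minimal subshift $M_i$ with a point $y_i\in M_i\cap[1]$, and $R(y_i):=\{n:y_i(n)=1\}$ is syndetic with some gap bound $L_i$. The diagonal action of $u$ on $M_1\times M_2$ is a minimal idempotent of the enveloping semigroup of $(M_1\times M_2,\sigma\times\sigma)$, so $u\cdot v$ is a minimal point of $M_1\times M_2$ for every $v\in M_1\times M_2$. The catch is that $u$ need not fix $y_i$: $(uy_i)(0)=1$ requires $R(y_i)\in u$, which is not automatic. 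The remedy is an ultrafilter pigeonhole applied to the finite cover $\Z_+=\bigcup_{n=0}^{L_i}(R(y_i)-n)$, which lies in $u$: this produces some $n_i\in\{0,\dots,L_i\}$ with $R(y_i)-n_i\in u$, which is precisely the condition $(u\sigma^{n_i}y_i)(0)=1$. Setting $z_i=u\sigma^{n_i}y_i$, the point $(z_1,z_2)=u\cdot(\sigma^{n_1}y_1,\sigma^{n_2}y_2)$ is minimal in $M_1\times M_2$ and lies in $[1]\times[1]$; hence $K_1\cap K_2$ is the return-time set of a minimal point to a clopen neighborhood of itself, whence syndetic by minimality, and central by the definition of $\F_{\cen}$ applied with $x=y=(z_1,z_2)$, which is trivially proximal to itself.

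The main obstacle is thus (2): a minimal idempotent does not in general fix any prescribed minimal point, so one cannot simply hope that $uy_i=y_i$ with $y_i\in[1]$. The pre-shifts $\sigma^{n_i}$, with $n_i$ bounded by the syndetic gap of $R(y_i)$, together with the ultrafilter pigeonhole against the finite cover of $\Z_+$ by translates of $R(y_i)$, are precisely what forces the minimal point $u\cdot(\sigma^{n_1}y_1,\sigma^{n_2}y_2)$ into $[1]\times[1]$. Parts (1) and (3) are then routine once the correct dynamical gadget (a shift-invariant measure for (1); a common period for (3)) has been identified.
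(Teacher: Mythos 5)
Your proof is correct, but for parts (1) and (2) it takes a genuinely different route from the paper's. For (1) the paper stays purely combinatorial/topological: using Lemma~\ref{inf-B} it overlaps long independence blocks to get $I([1]_{X_{S_1}}\times [1]_{X_{S_2}})\ge I([1]_{X_{S_1}})\,I([1]_{X_{S_2}})=BD^*(S_1)BD^*(S_2)>0$, and then invokes Lemma~\ref{independence density:lemma} (its topological surrogate for the ergodic theorem) to extract a single $F\in \Ind([1]_{X_{S_1}}\times[1]_{X_{S_2}})$ of exactly that density; any point of $\bigcap_{n\in F}(\sigma\times\sigma)^{-n}([1]\times[1])$ then gives $K_1\cap K_2\supseteq F$. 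Your Krylov--Bogolyubov plus Birkhoff argument is the classical measure-theoretic version of the same idea and is valid; the one step to make explicit is that a shift-invariant measure on the orbit closure of $1_{S_i}$ is automatically carried by the $\omega$-limit set $X_{S_i}$ (the remaining orbit points are non-recurrent, hence null). For (2) the paper needs no idempotent ultrafilters: it takes any point $x=(1_{K_1},1_{K_2})$ in a minimal subset of $M_1\times M_2$, notes $K_1,K_2\neq\emptyset$ because $M_i\neq\{(0,0,\dots)\}$, and applies $\sigma^{\min K_1}\times\sigma^{\min K_2}$ --- which carries the minimal set onto another minimal set --- to land in $[1]\times[1]$; the obstacle you single out (that $u$ need not fix $y_i$) simply never arises in this shift-the-minimum argument. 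Your ultrafilter pigeonhole against the cover $\Z_+=\bigcup_{n=0}^{L_i}(R(y_i)-n)$ is a correct, if heavier, way to achieve the same relocation. Part (3) coincides with the paper's, which calls it trivial. A minor sourcing point: Example~\ref{central vs ps:ex} is set in the two-sided shift, whereas $X_{S_i}$ is a one-sided $\omega$-limit set; the paper instead obtains the nontrivial minimal subshift of $X_{S_i}$ from a syndetic-support point of $X_{S_i}$ as in Example~\ref{s vs ps:ex}. The fact you use is nevertheless true, and any minimal set in the orbit closure automatically lies in $X_{S_i}$ since its points are recurrent.
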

\begin{proof}
(1). Set $X_i=X_{S_i}$. Recall the independence density defined
before Lemma~\ref{independence density:lemma}. We have
$I([1]_{X_i})=BD^*(S_i)>0$ for $i=1, 2$. For each $k\in \N$, take
a finite interval $J_1$ in $\Z_+$ with $|J_1|=k$ and a set $F_1\in
\Ind([1]_{X_1})$ with $F_1\subseteq J_1$ and $|F_1|\ge |J_1|
I([1]_{X_1})$.  Note that we can find arbitrarily long finite
interval $J_2$ in $\Z_+$ and a set $F_2\in \Ind([1]_{X_1})$ with
$F_2\subseteq J_2$ and $|F_2|\ge |J_2| I([1]_{X_2})$. By
Lemma~\ref{inf-B}, when $|I_2|$ is large enough, we have $|F_1\cap
(F_2+p)|\ge I([1]_{X_2})|F_1|-1\ge
|J_1|I([1]_{X_1})I([1]_{X_2})-1$ for some $p\in \Z$. Consider the
t.d.s. $(X_1\times X_2, \sigma\times \sigma)$. Note that $F_1\cap
(F_2+p)\in \Ind([1]_{X_1}\times [1]_{X_2})$. It follows that
$I([1]_{X_1}\times [1]_{X_2})\ge I([1]_{X_1})I([1]_{X_2})>0$. By
Lemma \ref{independence density:lemma} we can find $F\in
\Ind([1]_{X_1}\times [1]_{X_2})$ with density $I([1]_{X_1}\times
[1]_{X_2})$. Take $x\in \bigcap_{n\in F}(\sigma\times
\sigma)^{-n}([1]_{X_1}\times [1]_{X_2})$. Say, $x=(1_{K_1},
1_{K_2})$ for some $K_1, K_2\in \Z_+$. Then $1_{K_i}\in X_i$,
$i=1, 2$, and $K_1\cap K_2\supseteq F$. It follows that $K_1\cap
K_2\in \F_{\pud}$.

(2). Set $X_i=X_{S_i}$.
Since $S_i$ is in $\F_{\ps}$, there exists $1_{F_i}\in X_i$ with
$F_i\in \F_{\rms}$. By Lemma~\ref{unique minimal:lemma}.(2),
for each $i=1,2$, there is a minimal set $M_i\neq \{(0, 0, \dots)\}$ contained in
$X_i$.
Consider the t.d.s. $(M_1\times M_2, \sigma\times \sigma)$.
Let $M$ be a minimal set contained in
$M_1\times M_2$ and take $x\in M$.
Say,
$x=(1_{K_1}, 1_{K_2})$ for some $K_1, K_2\subseteq
\Z_+$. Then $K_1$ and $K_2$ are nonempty. For any $j, k\in \Z_+$, since
$\sigma^j\times \sigma^k$ is a factor map from
$M$ to a minimal set in $M_1\times M_2$, $\sigma^j\times
\sigma^k(x)=(\sigma^j1_{K_1}, \sigma^k 1_{K_2})$ is also a minimal point.
Replacing $x$ by $\sigma^{\min K_1}\times \sigma^{\min K_2}(x)$ if necessary,
we may assume that $\min K_1=\min K_2=0$.
Then $K_1\cap K_2=N(x, [1]_{X_1}\times [1]_{X_2})$ is syndetic and central.

(3). This is trivial.
\end{proof}

\begin{thm} The product of finitely many
$\F_{\rms}$-(resp. $\F_{\rs},
\F_{\pd}$) independent
t.d.s. is $\F_{\rms}$-(resp. $\F_{\rs},
\F_{\pd}$)
independent.
\end{thm}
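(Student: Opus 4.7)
The plan is to reduce the statement to the two-factor case by induction, and then, given product open sets $W_j = U_j \times V_j$ ($1 \le j \le n$) in $X_1 \times X_2$, to manufacture a single independence set lying in $\F$ that works simultaneously for $(U_1,\dots,U_n)$ in $X_1$ and for $(V_1,\dots,V_n)$ in $X_2$. The key input is Lemma~\ref{productminimal}, which turns two independence sets $S_1, S_2$ (in the separate factors) into a common one after passing to limit points in the associated subshifts $X_{S_i}$. For the family $\F_{\pd}$, as noted in the text, the conclusion is already known (see \cite{HY2,KL2}), so the only new work is for $\F_{\rms}$ and $\F_{\rs}$; by Proposition~\ref{same indep:prop}.(1) and the equality $b\F_{\rms} = \F_{\ps}$, the $\F_{\rms}$-case is equivalent to the $\F_{\ps}$-case.

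The first step is to pick, for each factor $(X_i, T_i)$, an $S_i \in \F \cap \Ind(U_1^{(i)},\dots,U_n^{(i)})$, where $U_j^{(1)} = U_j$ and $U_j^{(2)} = V_j$. The second step, which is the crux, is to apply Lemma~\ref{productminimal} to $(S_1, S_2)$ (part~(2) with $S_i \in \F_{\rms} \subseteq \F_{\ps}$ in the syndetic case, part~(3) in the $\F_{\rs}$ case) to obtain subsets $K_1, K_2 \subseteq \Z_+$ satisfying $1_{K_i} \in X_{S_i}$ and with $K_1 \cap K_2$ lying in the target family ($\F_{\rms} \cap \F_{\cen}$, respectively $\F_{\rs}$). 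The third step is to observe that the condition $1_{K_i} \in X_{S_i}$ forces every finite subset $W \subseteq K_i$ to be a $\Z_+$-translate of a finite subset of $S_i$; combined with Lemma~\ref{ye1}.(1) and (3) this gives $W \in \Ind(U_1^{(i)},\dots,U_n^{(i)})$, and then Lemma~\ref{ye1}.(2) yields $K_i \in \Ind(U_1^{(i)},\dots,U_n^{(i)})$.

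The fourth step assembles everything. Since $K_1 \cap K_2 \subseteq K_i$, Lemma~\ref{ye1}.(1) places $K_1 \cap K_2$ in $\Ind(U_1^{(i)},\dots,U_n^{(i)})$ for both $i=1,2$; then for any nonempty finite $J \subseteq K_1 \cap K_2$ and any $s \in \{1,\dots,n\}^J$ the product identity
\[
\bigcap_{j \in J}(T_1 \times T_2)^{-j}W_{s(j)}
= \Bigl(\bigcap_{j \in J}T_1^{-j}U_{s(j)}\Bigr) \times \Bigl(\bigcap_{j \in J}T_2^{-j}V_{s(j)}\Bigr)
\]
is nonempty, so $K_1 \cap K_2 \in \Ind(W_1,\dots,W_n)$. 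Since $\F_{\rms} \cap \F_{\cen} \subseteq \F_{\rms}$ and the $\F_{\rs}$ output is already in $\F_{\rs}$, the intersection $K_1 \cap K_2$ witnesses $\F$-independence of order $n$ for $(X_1 \times X_2, T_1 \times T_2)$ on the product open sets. Iterating the two-factor case handles arbitrary finite products.

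The main (potential) obstacle is conceptual rather than technical: the literal intersection $S_1 \cap S_2$ need not belong to $\F$ and need not be an independence set, so one cannot just intersect the given independence sets directly. The detour through the orbit-closure subshifts $X_{S_i}$ via Lemma~\ref{productminimal}, together with the stability of $\Ind$ under translation and subsets encoded in Lemma~\ref{ye1}, is precisely what rescues the argument; once that is in place, all remaining verifications are routine.
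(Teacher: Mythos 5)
Your proposal is correct and follows essentially the same route as the paper: reduce to two factors, extract $S_i\in\F\cap\Ind$ in each factor, invoke Lemma~\ref{productminimal} to produce $K_1,K_2$ with $1_{K_i}\in X_{S_i}$ and $K_1\cap K_2$ in the target family, and observe that $K_1\cap K_2$ is then an independence set for the product tuple. The only difference is that you spell out the step the paper dismisses as ``clear'' (that $1_{K_i}\in X_{S_i}$ forces $K_i\in\Ind$ via translation and subset stability from Lemma~\ref{ye1}), which is a welcome elaboration rather than a deviation.
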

\begin{proof} We shall prove the case $\F=\F_{\rms}$, and the proof
for the other cases is similar. Let $(X_i,T_i)$ be
an $\F_{\rms}$-independent t.d.s. for $i=1,2$. Let $U_1, \ldots, U_n$ and
$V_1,\ldots, V_n$ be nonempty open subsets of $X_1$ and $X_2$
respectively. Then there are syndetic sets $S_1\in \Ind(U_1,\ldots,
U_n)$ and $S_2\in \Ind(V_1,\ldots, V_n)$. By Lemma
\ref{productminimal} there are two subsets $K_1, K_2$ of $\Z_+$ such
that
$1_{K_i}\in X_{S_i}$, $i=1, 2$,
and $K_1\cap K_2$ is
syndetic. It is clear that $K_1\in\Ind(U_1,\ldots, U_n)$ and
$K_2\in\Ind(V_1,\ldots, V_n)$. Thus, $K_1\cap K_2\in \Ind(U_1\times
V_1, \ldots, U_n\times V_n)$. This implies that $(X_1\times X_2,
T_1\times T_2)$ is $\F_{\rms}$-independent. The theorem follows by
induction.
\end{proof}

Since  a family $\F$ has the Ramsey property if and only if its dual
family $\F^*$ has the finite intersection property,
we have

\begin{thm} \label{Ramsey to product:thm}
Let $\F$ be a family with the Ramsey property. Then the product
of finitely many $\F^*$-independent t.d.s. remains
$\F^*$-independent.
\end{thm}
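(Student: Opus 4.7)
The plan is to induct on the number of factors, so the main case is the product of two $\F^*$-independent t.d.s. $(X_1,T_1)$ and $(X_2,T_2)$. First I would recall the fact that a proper family $\F$ has the Ramsey property if and only if $\F^*$ is a filter (stated on page~26 of \cite{A} and used in the preliminary section). Since we assume $\F$ has the Ramsey property, the key consequence is that $\F^*$ is closed under finite intersection.

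Next I would reduce to basic open rectangles: given nonempty open subsets $W_1,\dots,W_n$ of $X_1\times X_2$, each $W_i$ contains a product $U_i\times V_i$ of nonempty open sets, and by the hereditary property of $\Ind$ it suffices to find an element of $\F^*$ in $\Ind(U_1\times V_1,\dots,U_n\times V_n)$. By the $\F^*$-independence of each factor, I can choose $S_1\in \Ind(U_1,\dots,U_n)\cap\F^*$ and $S_2\in \Ind(V_1,\dots,V_n)\cap\F^*$. By the filter property, $S:=S_1\cap S_2\in\F^*$.

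The remaining step is to verify $S\in \Ind(U_1\times V_1,\dots,U_n\times V_n)$. For any nonempty finite $J\subseteq S$ and any $s\in\{1,\dots,n\}^J$,
\[
\bigcap_{j\in J}(T_1\times T_2)^{-j}(U_{s(j)}\times V_{s(j)}) \;=\; \Bigl(\bigcap_{j\in J}T_1^{-j}U_{s(j)}\Bigr)\times\Bigl(\bigcap_{j\in J}T_2^{-j}V_{s(j)}\Bigr),
\]
and each factor on the right is nonempty because $J\subseteq S_1$ (respectively $J\subseteq S_2$) is an independence set for the corresponding tuple. Then I would finish by induction on the number of factors: if the statement holds for products of $k$ systems, applying the two-factor case to the product of the first $k$ and the $(k+1)$-th yields the result for $k+1$.

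I do not anticipate a serious obstacle here: the content of the theorem is essentially packaged into the Ramsey/filter duality plus the elementary identity above for preimages of products. The only thing to be careful about is the reduction to rectangular open sets and the (trivial) observation that the hereditary property of $\F^*$ lets us enlarge the candidate independence set if needed, so that passing from $S_1\cap S_2\subseteq\Z_+$ to an element of $\F^*$ causes no issue.
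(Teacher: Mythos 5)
Your proposal is correct and follows essentially the same route as the paper, which proves this theorem in one line by noting that the Ramsey property of $\F$ is equivalent to $\F^*$ having the finite intersection property; your write-up simply fills in the (routine) details of intersecting the two independence sets and checking the product identity for preimages. Nothing to fix.
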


In \cite[page 278]{Weiss00} Weiss constructed two weakly mixing
t.d.s. whose product is not transitive. (Weiss's example was only
stated to be $\Z$-weakly mixing, but is easily checked to be
$\Z_+$-weakly mixing.) In view of Theorem~\ref{weakmixing}, this
implies that the product of $\F_{\inf}$-independent
($\F_{\ip}$-independent resp.) t.d.s. may fail to be
$\F_{\inf}$-independent ($\F_{\ip}$-independent resp.).

\section{Classes of measurable $\F$-independence}

\subsection{General discussion}
In this subsection we characterize $\F_{\inf}$-(resp. $\F_{\ip}$, $\F_{\pubd}$)
independent m.d.s. in Theorems~\ref{wmmds} and \ref{kfpd}.

Recall that a m.d.s $(X, T)$ is said to be {\it ergodic} if for any $A, B\in \B$ with positive measures,
$N(A, B)$ is nonempty; it is called {\it weakly mixing} if $T\times T$ is ergodic.
Similar to the topological case (Theorem~\ref{weakmixing}) we have

\begin{thm}\label{wmmds}
For a m.d.s. $(X,\B, \mu, T)$ the following are equivalent:
\begin{enumerate}
\item $(X, \B, \mu, T)$ is weakly mixing.

\item $(X, \B, \mu, T)$ is $\F_{\inf}$-independent of order $2$.

\item $(X, \B, \mu, T)$ is $\F_{\inf}$-independent.

\item $(X, \B, \mu, T)$ is $\F_{\ip}$-independent of order $2$.

\item $(X, \B, \mu, T)$ is $\F_{\ip}$-independent.
\end{enumerate}
\end{thm}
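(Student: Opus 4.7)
The plan is to mirror the topological argument in Theorem~\ref{weakmixing}. The implications $(5)\Rightarrow(4)\Rightarrow(2)$ and $(5)\Rightarrow(3)\Rightarrow(2)$ are immediate since $\F_{\ip}\subseteq\F_{\inf}$ and $\F$-independence of any order $\ge 2$ specializes to $\F$-independence of order $2$. So the substantive task is to prove $(2)\Rightarrow(1)$ and $(1)\Rightarrow(5)$.

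For $(2)\Rightarrow(1)$, I would first extract the following consequence of $\F_{\inf}$-independence of order $2$: for every $A,B\in\B^+$ there exists $m\in\N$ with $m\in N(A,A)\cap N(A,B)\cap N(B,A)\cap N(B,B)$. This is obtained by picking two elements $j_1<j_2$ of an infinite independence set for $(A,B)$, setting $m=j_2-j_1$, and letting $s:\{j_1,j_2\}\to\{1,2\}$ run over its four possibilities. The target is then a measure-theoretic analog of Petersen's lemma: this condition alone forces $T$ to be weakly mixing. Arguing contrapositively, if $T$ is not ergodic then a $T$-invariant $A\in\B$ with $0<\mu(A)<1$ and $B=X\setminus A$ already give $N(A,B)=\emptyset$. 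If $T$ is ergodic but not weakly mixing, the spectral theorem supplies a non-trivial eigenvalue $\lambda=e^{2\pi i\alpha}$ with eigenfunction $f$ of modulus $1$ a.e.; for $\alpha=p/q$ rational with $q\ge 2$, the cyclic level sets of $f$ yield $A,B\in\B^+$ with $N(A,A)$ and $N(A,B)$ in disjoint residue classes modulo $q$, while for $\alpha$ irrational, unique ergodicity of rotation by $\lambda$ on $S^1$ makes $f_*\mu$ Lebesgue, so pulling back small arcs centered at $0$ and $\pi$ produces $A,B\in\B^+$ for which $N(A,A)$ and $N(A,B)$ concentrate near disjoint values of $n\alpha\bmod 1$.

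For $(1)\Rightarrow(5)$, I would transcribe the inductive construction from the topological proof. Using that $T$ being weakly mixing is equivalent to $(X^n,T^{(n)})$ being ergodic for every $n\in\N$, any finite intersection $\bigcap_{i=1}^r N(C_i,D_i)$ with $C_i,D_i\in\B^+$ is nonempty. Fix $A_1,\dots,A_k\in\B^+$ and recursively choose $t_1,t_2,\dots\in\N$ as follows. At stage $n+1$, assuming $F_n:=\{0\}\cup\{t_{i_1}+\cdots+t_{i_l}:1\le i_1<\cdots<i_l\le n\}$ has been shown to lie in $\Ind(A_1,\dots,A_k)$, enumerate the finitely many sets $C_s:=\bigcap_{j\in F_n}T^{-j}A_{s(j)}$ for $s\in\{1,\dots,k\}^{F_n}$; each $C_s$ is in $\B^+$ by the inductive hypothesis. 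Pick $t_{n+1}\in\bigcap_{s,s'}N(C_s,C_{s'})$, a finite intersection that is nonempty by weak mixing. Then $F_{n+1}=F_n\cup(t_{n+1}+F_n)$ is again an independence set, and removing $0$ from $\bigcup_n F_n$ yields the $\IP$-set generated by $\{t_i\}_{i\in\N}$, proving $\F_{\ip}$-independence of order $k$.

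I expect the main obstacle to be the clean verification of the measure-theoretic Petersen lemma, particularly in the irrational eigenvalue case, where one must appeal to unique ergodicity of irrational rotation to guarantee that the pullback sets $f^{-1}(I_0)$ and $f^{-1}(I_\pi)$ have positive measure for small arcs $I_0,I_\pi\subseteq S^1$. All remaining steps are routine transcriptions of the topological argument into the measure-theoretic setting via the analogues of Lemma~\ref{ye1}(1)--(3) noted to also hold for m.d.s.
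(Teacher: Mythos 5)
Your proposal is correct, and its skeleton coincides with the paper's: the trivial implications are handled identically, and your $(1)\Rightarrow(5)$ is exactly the paper's argument, namely the observation that weak mixing passes to finite self-products (so all finite intersections $\bigcap_i N(C_i,D_i)$ with $C_i,D_i\in\B^+$ are nonempty) followed by a verbatim transcription of the inductive $\IP$-set construction from Theorem~\ref{weakmixing}. The one place you genuinely diverge is $(2)\Rightarrow(1)$: the paper simply quotes Furstenberg's result that $N(A,B)\cap N(A,A)\neq\emptyset$ for all $A,B\in\B^+$ forces weak mixing (\cite[Theorem 4.31]{HF}), and then has to add a remark explaining why that theorem, proved there only for invertible systems, extends to general m.d.s. via \cite[Theorem 3.4]{Parry}. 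You instead prove the needed lemma directly from the spectral characterization of weak mixing, splitting into the non-ergodic case, the rational-eigenvalue case (cyclic level sets of a root-of-unity eigenfunction, which by ergodicity each have measure $1/q$ and are permuted, so $N(A,A)$ and $N(A,B)$ lie in distinct residue classes mod $q$), and the irrational case (where $f_*\mu$ is rotation-invariant, hence Haar by unique ergodicity, so small arcs around $1$ and $-1$ pull back to positive-measure sets whose return-time sets are separated by the value of $n\alpha\bmod 1$). This is a sound and complete substitute; it costs you a page of spectral bookkeeping but buys a self-contained argument that works for non-invertible $T$ without the paper's appeal to Parry. Note also that your derived condition ($m\in N(A,A)\cap N(A,B)\cap N(B,A)\cap N(B,B)$ for a single $m$) is stronger than what the cited Furstenberg theorem requires, which does no harm.
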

\begin{proof} It is clear that (5)$\Rightarrow$(4)$\Rightarrow$(2) and
(5)$\Rightarrow$(3)$\Rightarrow$(2).
The implication (2)$\Rightarrow$(1) follows from the fact that
if for any $A, B\in \B$ with positive measures one has $N(A, B)\cap N(A, A)\neq \emptyset$,
then $(X, \B, \mu, T)$ is weakly mixing \cite[Theorem 4.31]{HF}. (\cite[Theorem 4.31]{HF}
was proved only for invertible m.d.s., as it depended on \cite[Theorem 4.30]{HF}
which in turn was only proved for invertible m.d.s.; \cite[Theorem 3.4]{Parry} proved
\cite[Theorem 4.30]{HF} for any m.d.s.,
thus \cite[Theorem 4.31]{HF} also holds
for any m.d.s..)

When $T$ is weakly mixing, so is $T\times \dots \times T$ \cite[Theorem 1.24]{PW}.
Thus the proof of (1)$\Rightarrow$(5) in Theorem~\ref{weakmixing} also applies here.
\end{proof}

It was proved in \cite[Theorem 8.3]{HY2} and \cite[Theorem 3.16]{KL2} that a t.d.s. is
topological K if and only if each of its finite covers by non-dense open subsets has positive entropy.
Moreover, it is
shown in \cite[Theorem 9.4]{HY2} that there exists t.d.s. which is $\F_{\pubd}$-independent of order $2$
but is not $\F_{\pubd}$-independent of order $3$. Now we show that in the
measurable setup the situation is different.

We refer the reader to \cite[Chapter 4]{Parry} for the basics of the entropy theory.
A m.d.s. $(X, \B, \mu, T)$ is said to have {\it completely positive
entropy} if for every non-trivial countable measurable partition
$\alpha$ of $X$ with $0<H(\alpha)<\infty$ one has $h_{\mu}(T,
\alpha)>0$. The Rohlin-Sinai theorem says that
an invertible m.d.s. has completely positive entropy if and only if
it is an K-automorphism \cite{RS} \cite[Theorem 4.12]{Parry}.

For $a \ge 2$ let $\Omega_a=\{0,1,\dots,a-1\}^\Z$ and $Y\subseteq
\Omega_a$. A subset $I\subseteq \Z$ is called an {\it
interpolating set for} $Y$ if $Y|_I=\Omega_a|_I$.
Now suppose
that $(X,\mathcal B,\mu,T)$ is an invertible m.d.s. and that $\mathcal
P=\{P_0,P_1,\dots,P_{a-1}\}$ is a finite measurable partition of
$X$. Construct a set $Y_{\mathcal P}\subseteq \Omega_a$ as follows:
$$
Y_{\mathcal P}=\{\omega\in \Omega_a:\text{for all nonempty finite subsets}\
J\subseteq \Z,\ \mu(\bigcap_{j\in J}T^{-j}P_{\omega_j})>0\}.
$$
Glasner and Weiss showed that an invertible m.d.s. $(X, \B, \mu, T)$
has completely positive entropy if and only if
for every
finite measurable partition $\P=\{P_0, P_1, \dots, P_{a-1}\}$ of $X$ with
$\min_{0\le j\le a-1}\mu(P_j)>0$ the set $Y_{\P}$ has interpolating sets of positive density.
In our terminology, clearly  interpolating
sets of $\P$ are exactly the independence sets of the tuple $(P_0, P_1, \dots, P_{a-1})$.
Now we extend the result of Glasner and Weiss to general m.d.s..


\begin{thm}\label{kfpd}
Let $(X, \B, \mu, T)$ be a m.d.s.. Then the following are equivalent:
\begin{enumerate}
\item $(X,\mathcal{B},\mu,T)$ is $\F_{\pubd}$-independent.

\item $(X,\mathcal{B},\mu,T)$ is $\F_{\pubd}$-independent of order $2$.

\item $(X,\mathcal{B},\mu,T)$ has completely positive entropy.

\end{enumerate}
\end{thm}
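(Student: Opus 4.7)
The implication $(1)\Rightarrow(2)$ is immediate from the definitions; the substance of the theorem lies in $(2)\Rightarrow(3)$ and $(3)\Rightarrow(1)$. The central tool will be the Glasner--Weiss characterization of c.p.e.\ cited just before the statement, and the strategy is to reduce both non-trivial implications to it, passing to the natural extension $(\tilde X,\tilde\B,\tilde\mu,\tilde T)$ of $(X,\B,\mu,T)$ when the invertibility hypothesis of Glasner--Weiss is required. Two preliminaries. First, by the m.d.s.\ analogues of Lemma~\ref{ye1}(1)--(3) (already noted in the excerpt to be valid verbatim in the measurable setting) together with the reasoning of Proposition~\ref{same indep:prop}(1), and using the identity $b\F_{\pd}=\F_{\pubd}$ from Example~\ref{bF:ex}, the notions of $\F_{\pubd}$- and $\F_{\pd}$-independence of any fixed order coincide for m.d.s. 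Second, a straightforward measurable analogue of Lemma~\ref{natural extension:lem}---whose proof is a direct computation using $\pi\circ\tilde T=T\circ\pi$ and $\tilde\mu\circ\pi^{-1}=\mu$---gives $\Ind(\pi^{-1}A_1,\dots,\pi^{-1}A_k)=\Ind(A_1,\dots,A_k)$ for any measurable tuple, $h_{\tilde\mu}(\tilde T,\pi^{-1}\P)=h_{\mu}(T,\P)$ for any finite partition $\P$, and $T$ has c.p.e.\ iff $\tilde T$ does (iff $\tilde T$ is a K-automorphism by Rohlin--Sinai). So we may reduce to the invertible case whenever convenient.

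For $(2)\Rightarrow(3)$, take any $A\in\B$ with $0<\mu(A)<1$. By (2) and the preliminary equivalence of families, there is $F\in\F_{\pd}\cap\Ind(A,X\setminus A)$; in other words, the partition $\{A,X\setminus A\}$ admits an interpolating set of positive density. Lifting via $\pi^{-1}$, the same $F$ is an interpolating set of positive density for the lifted partition $\tilde\P=\{\pi^{-1}(A),\tilde X\setminus\pi^{-1}(A)\}$ in the invertible system $\tilde T$. The per-partition form of the Glasner--Weiss theorem then yields $h_{\tilde\mu}(\tilde T,\tilde\P)>0$, and since the natural extension preserves partition entropy, $h_{\mu}(T,\{A,X\setminus A\})>0$. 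Hence $A$ is not in the Pinsker $\sigma$-algebra of $T$. As $A$ was an arbitrary non-trivial set, the Pinsker $\sigma$-algebra is trivial, i.e., $T$ has c.p.e.

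For $(3)\Rightarrow(1)$, given a tuple $(A_1,\dots,A_k)$ of positive-measure sets, let $\P$ be the finite partition consisting of the positive-measure atoms of the Boolean algebra generated by $\{A_1,\dots,A_k\}$; each $A_i$ is a (finite) union of parts of $\P$. Passing to $(\tilde X,\tilde\B,\tilde\mu,\tilde T)$, which inherits c.p.e., the Glasner--Weiss theorem supplies an interpolating set $F\in\F_{\pd}$ for the lifted partition $\tilde\P$. For any nonempty finite $J\subseteq F$ and any $s\in\{1,\dots,k\}^J$, select for each $j\in J$ a part of $\tilde\P$ contained in $\pi^{-1}(A_{s(j)})$; the interpolating property forces the intersection of their $\tilde T^{-j}$-translates to have positive $\tilde\mu$-measure, and projecting down gives $\mu\bigl(\bigcap_{j\in J}T^{-j}A_{s(j)}\bigr)>0$. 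Therefore $F\in\Ind(A_1,\dots,A_k)\cap\F_{\pd}\subseteq\Ind(A_1,\dots,A_k)\cap\F_{\pubd}$, and $(1)$ follows since $k$ was arbitrary.

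The main obstacle is the per-partition form of the Glasner--Weiss theorem invoked in the $(2)\Rightarrow(3)$ step: only the global c.p.e.\ equivalence is recorded in the excerpt, whereas we need the sharper statement that an individual finite partition admitting a positive-density interpolating set already has strictly positive entropy. The easy half (positive entropy implies a positive-density interpolating set) is the standard Shannon--McMillan--Breiman / Rokhlin-tower calculation, and is the direction actually used in the proof of the global statement; the harder converse must be extracted from the Pinsker-algebra argument underlying Glasner--Weiss (alternatively, it can be invoked through the Kerr--Li identification of $\mu$-IE-pairs with $\mu$-entropy pairs in \cite{KL3}). The remaining point---the measurable counterpart of Lemma~\ref{natural extension:lem}---is essentially routine.
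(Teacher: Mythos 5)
Your implications $(1)\Rightarrow(2)$ and $(3)\Rightarrow(1)$ are sound. The latter takes a genuinely different route from the paper: you black-box the Glasner--Weiss interpolation theorem for invertible c.p.e.\ systems and reduce to it via the natural extension, whereas the paper reproves that direction from scratch for general m.d.s., using Theorem~\ref{CPE:thm} ($h_\mu(T^n,\alpha)\to H(\alpha)$) together with the Karpovsky--Milman lemma (Lemma~\ref{KM-lemma}) applied to a refinement of $(A_1,\dots,A_k)$ into equal-measure pieces. Your reduction is legitimate and somewhat shorter, at the cost of importing the invertible case as a citation rather than extending it.

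The problem is $(2)\Rightarrow(3)$, where you have correctly located, but not closed, the essential gap. Your argument hinges on the ``per-partition'' statement: if the two-set partition $\{A,X\setminus A\}$ admits an independence set of positive density, then $h_\mu(T,\{A,X\setminus A\})>0$. This is precisely the hard content of the theorem, and it does \emph{not} follow from the global Glasner--Weiss equivalence you quote (which is an iff at the level of the whole system having c.p.e., not at the level of a single partition); saying it ``must be extracted from the Pinsker-algebra argument underlying Glasner--Weiss'' or ``invoked through Kerr--Li'' leaves the proof incomplete. The paper closes this gap by a different reduction: it first proves Lemma~\ref{indep to positive entropy:lem} (a non-trivial m.d.s.\ that is $\F_{\pubd}$-independent of order $2$ has positive entropy, shown via Rosenthal's non-invertible Jewett--Krieger theorem, the variational principle, and a direct open-cover entropy count), and then applies this lemma not to individual partitions but to a Lebesgue-space realization of the Pinsker factor $(X,\P(T),\mu,T)$, which inherits $\F_{\pubd}$-independence of order $2$ yet has entropy zero, forcing it to be trivial. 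If you want to salvage your per-partition route instead, the cleanest repair within the paper's toolkit is to apply the proof of Lemma~\ref{indep to positive entropy:lem} to the factor generated by $\{A,X\setminus A\}$ (whose Kolmogorov--Sinai entropy equals $h_\mu(T,\{A,X\setminus A\})$): a positive-density independence set for $(A,X\setminus A)$ survives in that factor, so the factor has positive entropy, giving exactly the per-partition conclusion. As written, however, the key step is asserted rather than proved.
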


To prove Theorem~\ref{kfpd}, we need some preparation. For a Lebesgue space $(X, \B, \mu)$ and
a measurable partition $\alpha$ of $X$, we denote by $\hat{\alpha}$ the $\sigma$-algebra generated by
the items of $\alpha$; for a family $\{\B_j\}_{j\in J}$
of sub-$\sigma$-algebras of $\B$, we denote by
$\bigvee_{j\in J}\B_j$ the sub-$\sigma$-algebra of $\B$ generated by $\bigcup_{j\in J}\B_j$.
For a m.d.s. $(X, \B, \mu, T)$, a measurable partition $\alpha$ of $X$, and $0\le n\le m\le \infty$, we
denote $\bigvee^{m}_{j=n}T^{-j}\alpha$ and $\bigvee^{m}_{j=n}T^{-j}\hat{\alpha}$ by $\alpha^m_n$ and
$\hat{\alpha}^m_n$ respectively.
The following lemma is \cite[Lemma 4.6]{Parry} for non-invertible m.d.s..

\begin{lem} \label{Parry1:lemma}
Let $(X, \B, \mu, T)$ be a m.d.s., and let $\alpha$ and $\beta$ be countable measurable partitions
of $X$ with $H(\alpha), H(\beta)<\infty$. If $\beta\le \alpha$ or $\alpha\le \beta$, then
$\lim_{n\to +\infty}\frac{1}{n}H(\alpha^{n-1}_0|\hat{\beta}^{\infty}_n)=H(\alpha|\hat{\alpha}^{\infty}_1)$.
\end{lem}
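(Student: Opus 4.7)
The plan is to reduce the statement to Parry's Lemma 4.6 (which he states for invertible m.d.s.) by lifting everything to the natural extension. Let $(\tilde{X}, \tilde{\B}, \tilde{\mu}, \tilde{T})$ denote the natural extension of $(X, \B, \mu, T)$, with the measure-preserving factor map $\pi: \tilde{X} \to X$ satisfying $T\pi = \pi\tilde{T}$ and $\tilde{T}$ a measure-preserving bijection.

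First, I would pull the partitions back by setting $\tilde{\alpha} := \pi^{-1}\alpha$ and $\tilde{\beta} := \pi^{-1}\beta$. Since $\pi$ is measure-preserving, $H_{\tilde\mu}(\tilde\alpha) = H_\mu(\alpha)$ and $H_{\tilde\mu}(\tilde\beta) = H_\mu(\beta)$ remain finite, and the refinement hypothesis $\beta \le \alpha$ (resp.\ $\alpha \le \beta$) lifts to $\tilde\beta \le \tilde\alpha$ (resp.\ $\tilde\alpha \le \tilde\beta$). The key identity, valid for every $j \ge 0$ because $\pi\tilde T^j = T^j\pi$ and $\tilde T$ is invertible, is
\[
\tilde T^{-j}(\pi^{-1}\gamma) = \pi^{-1}(T^{-j}\gamma)
\]
for any partition or sub-$\sigma$-algebra $\gamma$ in $X$. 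Joining over $j$ yields
\[
\tilde\alpha^{n-1}_0 = \pi^{-1}(\alpha^{n-1}_0), \qquad \hat{\tilde\beta}{}^{\infty}_n = \pi^{-1}(\hat\beta^{\infty}_n), \qquad \hat{\tilde\alpha}{}^{\infty}_1 = \pi^{-1}(\hat\alpha^{\infty}_1).
\]
Because conditional entropy is invariant under pulling back the partition and the conditioning $\sigma$-algebra jointly by a measure-preserving map, this gives
\[
H_{\tilde\mu}(\tilde\alpha^{n-1}_0 \mid \hat{\tilde\beta}{}^{\infty}_n) = H_\mu(\alpha^{n-1}_0 \mid \hat\beta^{\infty}_n), \qquad H_{\tilde\mu}(\tilde\alpha \mid \hat{\tilde\alpha}{}^{\infty}_1) = H_\mu(\alpha \mid \hat\alpha^{\infty}_1).
\]

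Applying Parry's Lemma 4.6 to the invertible system $(\tilde X, \tilde\B, \tilde\mu, \tilde T)$ with the pulled-back pair $\tilde\alpha, \tilde\beta$ gives $\lim_n \frac{1}{n} H_{\tilde\mu}(\tilde\alpha^{n-1}_0 \mid \hat{\tilde\beta}{}^{\infty}_n) = H_{\tilde\mu}(\tilde\alpha \mid \hat{\tilde\alpha}{}^{\infty}_1)$, and the displayed identities immediately translate this into the claimed limit on $(X, \B, \mu, T)$. The only substantive step is the commutation relation $\tilde T^{-j}\pi^{-1} = \pi^{-1}T^{-j}$ for $j \ge 0$, which is exactly the place where the invertibility of $\tilde T$ is used to bypass the non-invertibility of $T$; I expect this to be the main (modest) obstacle, the rest being routine functoriality of conditional entropy under a measure-preserving factor map.
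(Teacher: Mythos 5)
Your proof is correct, but it takes a genuinely different route from the paper's. The paper reproves Parry's Lemma 4.6 from scratch in the non-invertible setting: for $\beta\le\alpha$ it uses the chain-rule identity $H(\alpha_0^{n-1}\mid\hat\beta_n^\infty)=\sum_{i=0}^{n-1}H(\alpha\mid\hat\alpha_1^i\vee\hat\beta_{i+1}^\infty)$ together with the increasing martingale theorem and Ces\`aro convergence, and for $\alpha\le\beta$ it sandwiches the $\liminf$ and $\limsup$ between two applications of the first case. You instead reduce to the invertible statement via the natural extension. Your reduction is sound: the intertwining $\pi\tilde T^j=T^j\pi$ (which, incidentally, does not actually need invertibility of $\tilde T$ -- it is just the semiconjugacy) gives $\tilde T^{-j}\pi^{-1}\gamma=\pi^{-1}(T^{-j}\gamma)$ for all $j\ge 0$; only non-negative powers of $T$ occur anywhere in the statement; $\sigma(\pi^{-1}\mathcal C)=\pi^{-1}(\sigma(\mathcal C))$ handles the infinite joins; and conditional entropy is preserved under simultaneous pullback by a measure-preserving factor map because $E_{\tilde\mu}(1_{\pi^{-1}A}\mid\pi^{-1}\mathcal C)=E_\mu(1_A\mid\mathcal C)\circ\pi$. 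Two caveats are worth recording: you should verify that Parry's Lemma 4.6 is stated for countable partitions of finite entropy rather than only finite partitions (the paper implicitly asserts this by describing its lemma as Parry's statement ``for non-invertible m.d.s.''), and your argument imports the natural extension as extra machinery. What each approach buys: the paper's direct computation is self-contained and makes visible that Parry's argument never uses invertibility, while yours is shorter and cleanly isolates invertibility as the only issue, at the cost of citing the invertible lemma as a black box.
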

\begin{proof} We follow the proof of \cite[Lemma 4.6]{Parry}. Consider first the case $\beta\le \alpha$. The sequence
of $\sigma$-algebras $\{\hat{\alpha}^n_1\vee \hat{\beta}^{\infty}_{n+1}\}_{n\in \N}$
is increasing
and their union generates the $\sigma$-algebra $\hat{\alpha}^{\infty}_1$.
By the increasing Martingale theorem \cite[Theorem 2.6]{Parry} one has
$$\lim_{n\to +\infty}H(\alpha|\hat{\alpha}^n_1\vee \hat{\beta}^{\infty}_{n+1})=H(\alpha|\hat{\alpha}^{\infty}_1).$$
Since
\begin{eqnarray*}
H(\alpha^{n-1}_0|\hat{\beta}^{\infty}_n)
=\sum^{n-1}_{i=0}H(T^{-i}\alpha|\hat{\alpha}^{n-1}_{i+1}\vee \hat{\beta}^{\infty}_n)
=\sum^{n-1}_{i=0}H(\alpha|\hat{\alpha}^{n-1-i}_1\vee \hat{\beta}^{\infty}_{n-i})
=\sum^{n-1}_{i=0}H(\alpha|\hat{\alpha}^i_1\vee \hat{\beta}^{\infty}_{i+1}),
\end{eqnarray*}
we conclude that $\lim_{n\to +\infty}\frac{1}{n}H(\alpha^{n-1}_0|\hat{\beta}^{\infty}_n)
=H(\alpha|\hat{\alpha}^{\infty}_1)$.

Next we consider the case $\alpha\le \beta$. One has
$$ \limsup_{n\to +\infty}\frac{1}{n}H(\alpha^{n-1}_0|\hat{\beta}^{\infty}_n)\le
\lim_{n\to +\infty}\frac{1}{n}H(\alpha^{n-1}_0|\hat{\alpha}^{\infty}_n)=H(\alpha|\hat{\alpha}^{\infty}_1),$$
where the second equality comes from the above paragraph.
One also has
\begin{eqnarray*}
 \frac{1}{n}H(\beta^{n-1}_0|\hat{\beta}^{\infty}_n)=\frac{1}{n}H(\beta^{n-1}_0\vee
 \alpha^{n-1}_0|\hat{\beta}^{\infty}_n)
 =\frac{1}{n}H(\alpha^{n-1}_0|\hat{\beta}^{\infty}_n)+\frac{1}{n}H(\beta^{n-1}_0|\hat{\alpha}^{n-1}_0\vee
 \hat{\beta}^{\infty}_n),
\end{eqnarray*}
and
\begin{eqnarray*}
 \frac{1}{n}H(\beta^{n-1}_0|\hat{\alpha}^{\infty}_n)=\frac{1}{n}H(\beta^{n-1}_0
 \vee \alpha^{n-1}_0|\hat{\alpha}^{\infty}_n)
 =\frac{1}{n}H(\alpha^{n-1}_0|\hat{\alpha}^{\infty}_n)+\frac{1}{n}H(\beta^{n-1}_0|\hat{\alpha}^{\infty}_0).
\end{eqnarray*}
Since $H(\beta^{n-1}_0|\hat{\alpha}^{n-1}_0\vee  \hat{\beta}^{\infty}_n)\le H(\beta^{n-1}_0|
\hat{\alpha}^{\infty}_0)$, we get
 \begin{eqnarray*}
 \frac{1}{n}H(\beta^{n-1}_0|\hat{\beta}^{\infty}_n)- \frac{1}{n}H(\alpha^{n-1}_0|\hat{\beta}^{\infty}_n)
 \le \frac{1}{n}H(\beta^{n-1}_0|\hat{\alpha}^{\infty}_n)-\frac{1}{n}H(\alpha^{n-1}_0|\hat{\alpha}^{\infty}_n).
\end{eqnarray*}
Taking $\limsup$ on both sides, by the above paragraph we get
$$ H(\beta|\hat{\beta}^{\infty}_1)-\liminf_{n\to +\infty}\frac{1}{n}H(\alpha^{n-1}_0|
\hat{\beta}^{\infty}_n)\le H(\beta|\hat{\beta}^{\infty}_1)-
H(\alpha|\hat{\alpha}^{\infty}_1).
$$
That is, $\liminf_{n\to +\infty}\frac{1}{n}H(\alpha^{n-1}_0|\hat{\beta}^{\infty}_n)\ge
H(\alpha|\hat{\alpha}^{\infty}_1)$. Therefore $\lim_{n\to +\infty}\frac{1}{n}H(\alpha^{n-1}_0|
\hat{\beta}^{\infty}_n)= H(\alpha|\hat{\alpha}^{\infty}_1)$
as desired.
\end{proof}

For a m.d.s. $(X, \B, \mu, T)$, denote by $\P(T)$ the {\it Pinsker
$\sigma$-algebra} of $T$ \cite[page 113]{PW}, consisting of $A\in
\B$ such that $h_{\mu}(T, \{A, X\setminus A\})=0$. For a Lebesgue
space $(X, \B, \mu)$ and sub-$\sigma$-algebras $\B_1$ and $\B_2$
of $\B$, we write $\B_1\subseteq_{\mu} \B_2$ if for every $A_1\in
\B_1$ we can find $A_2\in \B_2$ with $\mu(A_1\Delta A_2)=0$; we
write $\B_1=_{\mu}\B_2$ if $\B_1\subseteq_{\mu}\B_2$ and
$\B_2\subseteq_{\mu}\B_1$. The next theorem appeared in
\cite[12.3]{Rohlin}. For the convenience of the reader, we give a
proof here.

\begin{thm} \label{Pinsker:thm}
Let $(X, \B, \mu, T)$ be a m.d.s.. Then $\P(T)=_{\mu}
\bigvee_{\alpha} \bigcap_{n\in \Z_+}\hat{\alpha}^{\infty}_{n}$ for
$\alpha$ running over countable measurable partitions of $X$ with
$H(\alpha)<\infty$.
\end{thm}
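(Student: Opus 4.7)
The plan is to establish the two inclusions $\P(T)\subseteq_\mu\bigvee_\alpha\bigcap_{n\in\Z_+}\hat\alpha^\infty_n$ and $\bigvee_\alpha\bigcap_{n\in\Z_+}\hat\alpha^\infty_n\subseteq_\mu\P(T)$ separately. In both directions I rely on the standard identity $h_\mu(T,\alpha)=H(\alpha|\hat\alpha^\infty_1)$ for any countable measurable partition $\alpha$ with $H(\alpha)<\infty$; for non-invertible $T$ this follows from $H(\alpha^n_1)=H(\alpha^{n-1}_0)$ (measure preservation), the resulting telescoping $H(\alpha^{n-1}_0)=\sum_{k=0}^{n-1}H(\alpha|\hat\alpha^k_1)$, and Ces\`aro averaging combined with the increasing-martingale convergence $H(\alpha|\hat\alpha^k_1)\to H(\alpha|\hat\alpha^\infty_1)$.

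For the first (easier) inclusion, I take $A\in\P(T)$ and set $\alpha=\{A,X\setminus A\}$, so $H(\alpha)\le\log 2<\infty$ and $h_\mu(T,\alpha)=0$. The identity above yields $H(\alpha|\hat\alpha^\infty_1)=0$, hence $\hat\alpha\subseteq_\mu\hat\alpha^\infty_1$ and $\hat\alpha^\infty_0=_\mu\hat\alpha^\infty_1$. Since $T^{-1}$ preserves $\mu$-null sets, iterating gives $\hat\alpha^\infty_n=_\mu\hat\alpha^\infty_{n+1}$ for every $n$, and thus $\hat\alpha^\infty_0=_\mu\hat\alpha^\infty_n$ for every $n$. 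A reverse-martingale argument applied to $E(1_A|\hat\alpha^\infty_n)$ then upgrades this chain of equalities to $\hat\alpha^\infty_0=_\mu\bigcap_n\hat\alpha^\infty_n$, so $A$ lies in $\bigcap_n\hat\alpha^\infty_n$ modulo $\mu$.

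For the harder inclusion, I fix a countable partition $\alpha$ with $H(\alpha)<\infty$ and $B\in\bigcap_n\hat\alpha^\infty_n$, and set $\beta=\{B,X\setminus B\}$, $\gamma=\alpha\vee\beta$; the task is to show $h_\mu(T,\beta)=0$. The key structural observation is that $B\in\hat\alpha^\infty_m$ for every $m$ implies $T^{-k}B\in\hat\alpha^\infty_{m+k}$ for every $k,m\ge 0$, so $\hat\beta^{n-1}_0\subseteq\hat\alpha^\infty_m$ for every $m$; in particular $\hat\beta^{n-1}_0\subseteq\hat\alpha^\infty_n$. This forces $H(\gamma^{n-1}_0|\hat\alpha^\infty_n)=H(\alpha^{n-1}_0|\hat\alpha^\infty_n)$, because $H(\beta^{n-1}_0|\hat\alpha^{n-1}_0\vee\hat\alpha^\infty_n)=0$. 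Applying Lemma~\ref{Parry1:lemma} to the pair $(\gamma,\alpha)$ (using $\alpha\le\gamma$) and to $(\alpha,\alpha)$, and taking $\tfrac1n$-limits, both sides converge and give $h_\mu(T,\gamma)=H(\gamma|\hat\gamma^\infty_1)=H(\alpha|\hat\alpha^\infty_1)=h_\mu(T,\alpha)$. Decomposing $H(\gamma^{n-1}_0)=H(\beta^{n-1}_0)+H(\alpha^{n-1}_0|\hat\beta^{n-1}_0)$ and using $\hat\beta^{n-1}_0\subseteq\hat\alpha^\infty_n$ once more yields $H(\alpha^{n-1}_0|\hat\beta^{n-1}_0)\ge H(\alpha^{n-1}_0|\hat\alpha^\infty_n)$; dividing by $n$ and letting $n\to\infty$ gives $h_\mu(T,\gamma)\ge h_\mu(T,\beta)+h_\mu(T,\alpha)$, whence $h_\mu(T,\beta)\le 0$. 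The main obstacle is this last direction: extracting $h_\mu(T,\beta)=0$ from $B\in\bigcap_n\hat\alpha^\infty_n$ requires the double use of Lemma~\ref{Parry1:lemma} together with the entropy-growth comparison above, all pivoting on the single containment $\hat\beta^{n-1}_0\subseteq\hat\alpha^\infty_n$.
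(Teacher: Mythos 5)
Your proof is correct, and both inclusions rest on the same two pillars as the paper's argument: the identity $h_{\mu}(T,\beta)=H(\beta|\hat{\beta}^{\infty}_1)$ combined with Lemma~\ref{Parry1:lemma}, and the key containment $\hat{\beta}^{n-1}_0\subseteq\hat{\alpha}^{\infty}_n$ for $B\in\bigcap_{n}\hat{\alpha}^{\infty}_{n}$. The executions differ in an instructive way in the harder direction. To show $\bigcap_{n}\hat{\alpha}^{\infty}_{n}\subseteq\P(T)$, the paper notes that the containment forces $H(\beta^{n-1}_0|\hat{\alpha}^{\infty}_n\vee\hat{\beta}^{\infty}_n)=0$ and then invokes the $\alpha\le\beta$ case of Lemma~\ref{Parry1:lemma} applied to the pair $(\beta,\alpha\vee\beta)$, obtaining $H(\beta|\hat{\beta}^{\infty}_1)=0$ in a single step. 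You instead use only the easier $\beta\le\alpha$ case of the lemma, twice, to get $h_{\mu}(T,\alpha\vee\beta)=h_{\mu}(T,\alpha)$, and then extract $h_{\mu}(T,\beta)=0$ from the chain rule $H(\gamma^{n-1}_0)=H(\beta^{n-1}_0)+H(\alpha^{n-1}_0|\hat{\beta}^{n-1}_0)$ together with monotonicity of conditional entropy under the same containment. Your route is longer but sidesteps the delicate half of Lemma~\ref{Parry1:lemma}; in effect you re-derive its mechanism inline, which is a reasonable trade. For the inclusion $\P(T)\subseteq_{\mu}\bigvee_{\alpha}\bigcap_{n}\hat{\alpha}^{\infty}_{n}$ both arguments begin with $\hat{\alpha}\subseteq_{\mu}\hat{\alpha}^{\infty}_1$ and the iteration $\hat{\alpha}^{\infty}_n\subseteq_{\mu}\hat{\alpha}^{\infty}_{n+1}$; the paper finishes by exhibiting the explicit set $\bigcap_{n}\bigcup_{m\ge n}A_m$, while you appeal to the decreasing martingale theorem applied to $E(1_A|\hat{\alpha}^{\infty}_n)$. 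Both finishes are standard and correct.
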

\begin{proof} Let $\alpha, \beta$ be countable measurable partitions of $X$ with
$H(\alpha), H(\beta)<\infty$ and $\hat{\beta}\subseteq \bigcap_{n\in \Z_+}\hat{\alpha}^{\infty}_{n}$.
 For any $m\le n$ in $\Z_+$, one has $T^{-m}\hat{\beta}\subseteq \hat{\alpha}^{\infty}_n$.
Thus $\frac{1}{n}H(\beta^{n-1}_0|\hat{\alpha}^{\infty}_n\vee \hat{\beta}^{\infty}_n)=0$ for every $n\in \N$.
Taking a limit, by Lemma~\ref{Parry1:lemma} we get $H(\beta|\hat{\beta}^{\infty}_1)=0$. That is,
$h_{\mu}(T, \beta)=0$. Thus $\bigcap_{n\in \Z_+}\hat{\alpha}^{\infty}_{n}\subseteq \P(T)$.

Conversely, let $A\in \P(T)$. Set $\alpha=\{A, X\setminus A\}$.
Then $0=h_{\mu}(T, \alpha)=H(\alpha|\alpha^{\infty}_1)$. Thus
$\hat{\alpha}\subseteq_{\mu}  \hat{\alpha}^{\infty}_1$
\cite[Proposition 14.18.1]{EG}. It follows that
$T^{-n}\hat{\alpha}\subseteq_{\mu} \hat{\alpha}^{\infty}_{n+1}$
and hence
$\hat{\alpha}^{\infty}_n\subseteq_{\mu}\hat{\alpha}^{\infty}_{n+1}$
for every $n\in \Z_+$. Then for each $n\in \N$ we can find $A_n\in
\hat{\alpha}^{\infty}_{n}$ with $\mu(A\Delta A_n)=0$. Note that
$\bigcap_{n\in \N}\bigcup_{m\ge n}A_m\in \bigcap_{n\in
\Z_+}\hat{\alpha}^{\infty}_n$ and $\mu(A\Delta (\bigcap_{n\in
\N}\bigcup_{m\ge n}A_m))=0$. Therefore $\P(T)\subseteq_{\mu}
\bigvee_{\alpha}\bigcap_{n\in \Z_+}\hat{\alpha}^{\infty}_{n}$ for
$\alpha$ running over countable measurable partitions of $X$ with
$H(\alpha)<\infty$.
\end{proof}

The next result appeared implicitly in \cite[13.2]{Rohlin}. For completeness, we give a proof here.

\begin{thm}\label{CPE:thm}
Let $(X, \B, \mu, T)$ be a m.d.s.. Then the following are equivalent:
\begin{enumerate}
%
%
\item $(X,\mathcal{B},\mu,T)$ has completely positive entropy.

\item For every countable measurable partition $\alpha$ of $X$ with $H(\alpha)<\infty$,
one has $\lim_{n\to +\infty}h_{\mu}(T^n, \alpha)=H(\alpha)$.
\end{enumerate}
\end{thm}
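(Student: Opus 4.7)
The plan is to recognize $h_{\mu}(T^n,\alpha)$ as a conditional entropy and then to invoke the Pinsker description from Theorem~\ref{Pinsker:thm}. Applying the standard identity $h_{\mu}(T,\alpha)=H(\alpha|\hat{\alpha}^{\infty}_1)$ with $T^n$ in place of $T$ gives
$$h_{\mu}(T^n,\alpha)=H\bigl(\alpha\bigm| \bigvee_{i=1}^{\infty}T^{-ni}\hat{\alpha}\bigr).$$
Write $\B_n:=\bigvee_{i=1}^{\infty}T^{-ni}\hat{\alpha}$; since $\B_n\subseteq \hat{\alpha}^{\infty}_n$ and conditioning on a larger $\sigma$-algebra does not increase entropy, one gets the sandwich
$$H(\alpha)\ge h_{\mu}(T^n,\alpha)=H(\alpha|\B_n)\ge H(\alpha|\hat{\alpha}^{\infty}_n).$$

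For the implication (2)$\Rightarrow$(1) I would use the submultiplicativity $h_{\mu}(T^n,\alpha)\le n\,h_{\mu}(T,\alpha)$, which follows from the partition refinement $\bigvee_{i=0}^{k-1}T^{-ni}\alpha\le \bigvee_{j=0}^{nk-1}T^{-j}\alpha$. Indeed, if some countable partition $\alpha$ had $0<H(\alpha)<\infty$ and $h_{\mu}(T,\alpha)=0$, then $h_{\mu}(T^n,\alpha)=0$ for every $n$, so condition (2) would force $H(\alpha)=0$, a contradiction. Hence $h_{\mu}(T,\alpha)>0$ for every such $\alpha$, which is exactly completely positive entropy.

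For (1)$\Rightarrow$(2) I would combine the above sandwich with a decreasing-martingale argument. The sequence $\{\hat{\alpha}^{\infty}_n\}_{n\in\N}$ is decreasing, and by Theorem~\ref{Pinsker:thm} its intersection is contained in $\P(T)$, which under completely positive entropy is trivial modulo $\mu$. The decreasing analogue of \cite[Theorem 2.6]{Parry} (which follows from the $L^1$-convergence $E[1_A|\hat{\alpha}^{\infty}_n]\to E[1_A|\bigcap_{m}\hat{\alpha}^{\infty}_m]$ together with dominated convergence for the bounded function $t\log t$ on $[0,1]$) then gives
$$\lim_{n\to\infty}H(\alpha|\hat{\alpha}^{\infty}_n)=H\bigl(\alpha\bigm|\bigcap_{m}\hat{\alpha}^{\infty}_m\bigr)=H(\alpha),$$
and the sandwich forces $h_{\mu}(T^n,\alpha)\to H(\alpha)$.

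The only genuine obstacle I anticipate is the decreasing-martingale continuity statement for conditional entropy. The excerpt only invokes the increasing version (in the proof of Theorem~\ref{Pinsker:thm}), but under $H(\alpha)<\infty$ the decreasing version is classical and reduces to ordinary $L^1$-martingale convergence; the remainder of the argument is bookkeeping that pastes this convergence together with the Pinsker identification from Theorem~\ref{Pinsker:thm}.
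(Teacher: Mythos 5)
Your proposal is correct and follows essentially the same route as the paper: the identity $h_{\mu}(T^n,\alpha)=H(\alpha|\bigvee_{j\ge 1}T^{-jn}\hat{\alpha})$, the sandwich $H(\alpha)\ge h_{\mu}(T^n,\alpha)\ge H(\alpha|\hat{\alpha}^{\infty}_n)$, the decreasing martingale theorem combined with Theorem~\ref{Pinsker:thm}, and the inequality $h_{\mu}(T,\alpha)\ge \frac{1}{n}h_{\mu}(T^n,\alpha)$ for the converse. The decreasing-martingale continuity of conditional entropy that you flag as the only potential obstacle is exactly what the paper cites as \cite[Theorem 14.28]{EG}, so no gap remains.
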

\begin{proof}
(1)$\Rightarrow$(2): Let $\alpha$ be a countable measurable
partition of $X$ with $H(\alpha)<\infty$. For each $n\in \N$ one
has $h_{\mu}(T^n, \alpha)=H(\alpha|\bigvee^{\infty}_{j=1}
T^{-jn}\hat{\alpha})\ge H(\alpha|\hat{\alpha}^{\infty}_n)$. By the
decreasing Martingale theorem \cite[Theorem 14.28]{EG} we have
$\lim_{n\to +\infty}
H(\alpha|\hat{\alpha}^{\infty}_n)=H(\alpha|\bigcap_{n\in
\Z_+}\hat{\alpha}^{\infty}_n)$. Since $T$ has completely positive
entropy, $\P(T)$ is exactly the $\sigma$-algebra of measurable
subsets of  $X$ with measure $0$ or $1$. Thus
$H(\alpha|\bigcap_{n\in \Z_+}\hat{\alpha}^{\infty}_n)=H(\alpha)$
by Theorem~\ref{Pinsker:thm}. Therefore $\liminf_{n\to
+\infty}h_{\mu}(T^n, \alpha)\ge H(\alpha)$. On the other hand, for
each $n\in \N$ one has $h_{\mu}(T^n, \alpha)\le H(\alpha)$. Thus
$\lim_{n\to +\infty}h_{\mu}(T^n, \alpha)=H(\alpha)$.

(2)$\Rightarrow$(1): Let $\alpha$ be a countable measurable partition of $X$ with $0<H(\alpha)<\infty$.
For each $n\in \N$ one has $h_{\mu}(T, \alpha)\ge \frac{1}{n}h_{\mu}(T^n, \alpha)$.
Since $\lim_{n\to +\infty}h_{\mu}(T^n, \alpha)=H(\alpha)>0$,
we conclude that $h_{\mu}(T, \alpha)>0$.
\end{proof}

\begin{lem} \label{indep to positive entropy:lem}
A non-trivial m.d.s. which is $\F_{\pubd}$-independent of order $2$ has positive entropy.
\end{lem}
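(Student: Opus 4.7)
The plan is to reduce to a symbolic factor and then invoke the Kerr--Li characterization of measure-theoretic IE-pairs as measure-theoretic entropy pairs. Since $(X,\mathcal{B},\mu,T)$ is non-trivial, there exists $A\in\mathcal{B}$ with $0<\mu(A)<1$; set $B=X\setminus A$. By $\F_{\pubd}$-independence of order $2$, one may pick $F\in \Ind(A,B)\cap \F_{\pubd}$. The first step is to upgrade this to a positive-\emph{density} independence set. The shift-invariance of independence sets (Lemma~\ref{ye1}(3)) extends verbatim to the m.d.s.\ setting via the $T$-invariance of $\mu$; combined with Ellis's equality $\F_{\pubd}=b\F_{\pd}$ from Example~\ref{bF:ex} and Lemma~\ref{ye1}(2), this produces $F_{0}\in \Ind(A,B)\cap \F_{\pd}$. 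Indeed, $F\in b\F_{\pd}$ supplies $F_{0}\in \F_{\pd}$ such that every finite $W\subseteq F_{0}$ admits some $m\in\Z$ with $m+W\subseteq F$, and then shift-invariance and heredity of $\Ind$ force $W\in\Ind(A,B)$, whence $F_{0}\in\Ind(A,B)$.

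Next I would pass to the symbolic factor $\phi\colon X\to\{0,1\}^{\Z_{+}}$ defined by $\phi(x)_{n}=1_{A}(T^{n}x)$, with $\nu=\phi_{*}\mu$ and $Y=\supp(\nu)\subseteq\{0,1\}^{\Z_{+}}$. Then $(Y,\sigma,\nu)$ is a subshift equipped with a $\sigma$-invariant probability measure and is a factor of $(X,T,\mu)$, so $h_{\mu}(T)\ge h_{\nu}(\sigma)$. Since $\phi$-preimage preserves positivity of measure, $\F_{\pubd}$-independence of order $2$ descends to $(Y,\sigma,\nu)$. The hypothesis $0<\mu(A)<1$ forces $Y$ to contain at least two distinct points: otherwise $\phi$ would be $\mu$-a.e.\ constant, giving $\mu(A)\in\{0,1\}$.

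For the final step I would invoke the Kerr--Li theorem from \cite{KL3} identifying non-diagonal measure-theoretic IE-pairs with measure-theoretic entropy pairs for a topological m.d.s. Pick distinct $y_{1},y_{2}\in Y=\supp(\nu)$ and disjoint open neighborhoods $U_{i}\ni y_{i}$; both satisfy $\nu(U_{i})>0$. The descended $\F_{\pubd}$-independence, together with the first paragraph's argument now applied to $\nu$, supplies a positive-density independence set for $(U_{1},U_{2})$, so $(y_{1},y_{2})$ is a non-diagonal $\nu$-IE-pair, hence a $\nu$-entropy pair by Kerr--Li. This forces $h_{\nu}(\sigma)>0$ and consequently $h_{\mu}(T)>0$.

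The genuine difficulty lies in that last implication. On the topological side, a positive-density independence set for $([0]_{Y},[1]_{Y})$ immediately yields subword complexity $p_{Y}(n)\ge 2^{dn}$ with $d=d(F_{0})>0$, so $(Y,\sigma)$ has positive topological entropy; the hard part is promoting this to positive \emph{measure-theoretic} entropy of the specific invariant measure $\nu$, which is exactly the content of the Kerr--Li machinery. An alternative route would be to invoke the corresponding half of the Glasner--Weiss interpolating-set theorem cited earlier in the paper, but the combinatorial core of either approach is essentially the same.
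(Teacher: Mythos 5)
There is a genuine gap at the final step. The upgrade from $\F_{\pubd}$ to $\F_{\pd}$ via $b\F_{\pd}=\F_{\pubd}$ and the passage to the symbolic factor are both fine, but the sentence ``supplies a positive-density independence set for $(U_1,U_2)$, so $(y_1,y_2)$ is a non-diagonal $\nu$-IE-pair'' conflates two different notions of independence. In \cite{KL3} a $\mu$-IE-pair is defined through the upper $\mu$-independence density, which demands that for every $\delta>0$ and every measurable $D$ with $\mu(D)\ge 1-\delta$ there be combinatorial independence sets of uniformly positive density \emph{relative to} $D$, i.e.\ each intersection $\bigcap_{j\in J}\sigma^{-j}U_{s(j)}$ must actually meet $D$. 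What you have produced is a single positive-density set $F$ for which each such intersection merely has positive $\nu$-measure in the sense of Definition~\ref{indede1}; these measures may decay arbitrarily fast, so the intersections can all miss a fixed $D$ of measure $1-\delta$, and the hypothesis of the Kerr--Li identification of $\mu$-IE-pairs with entropy pairs is not verified. Bridging these two notions --- showing that a positive-density independence set with positive-measure intersections forces positive entropy --- is exactly the nontrivial content of the lemma (the relevant half of the Glasner--Weiss interpolation theorem), so the argument is circular at its crucial point. Note also that simply counting the $2^{|F\cap\{0,\dots,n-1\}|}$ positive-measure atoms of $\bigvee_{j\in F\cap\{0,\dots,n-1\}}\sigma^{-j}\{[0]_Y,[1]_Y\}$ does not bound $h_\nu$ from below, since those atoms need not carry comparable mass.

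The paper's proof sidesteps this entirely. Arguing by contradiction from $h_\mu(T)=0$, it notes the system is ergodic and invokes Rosenthal's non-invertible Jewett--Krieger theorem to replace $(X,\B,\mu,T)$ by a \emph{uniquely ergodic} topological model $(\widehat X,\widehat T)$ with fully supported $\widehat\mu$; the variational principle then gives $h_{\rm top}(\widehat T)=h_\mu(T)=0$. For disjoint closed positive-measure sets $A,B$ there, a measure-theoretic independence set is in particular a topological one, so the elementary cover estimate yields $h_{\rm top}(\widehat T,\{A^c,B^c\})\ge \overline{d}(F)\log 2$, forcing $\overline{d}(F)=0$, and $b\F_{\pud}=\F_{\pubd}$ rules out positive upper Banach density as well. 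Unique ergodicity is what makes the purely topological counting argument see the measure entropy; your general symbolic factor $(Y,\sigma,\nu)$ can have $h_\nu(\sigma)=0$ while $h_{\rm top}(\sigma|_Y)>0$, which is precisely why the step you delegate to \cite{KL3} cannot be a formality. To salvage your route you would need to quote the precise Glasner--Weiss statement that $h_\nu(\sigma,\P)=0$ implies $Y_{\P}$ has no interpolating sets of positive density, and check it for non-invertible systems, rather than the IE-pair characterization.
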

\begin{proof}
Assume that $(X,\mathcal{B},\mu,T)$ is a non-trivial m.p.s. which is
$\F_{\pubd}$-independent of order $2$ and has entropy $0$.
Clearly $(X,\mathcal{B},\mu,T)$ is ergodic.
By Rosenthal's extension of the  Jewett-Krieger theorem
to non-invertible m.d.s. \cite{Ros88}, there exists a
t.d.s. $(\widehat{X},\widehat{T})$ with a unique invariant Borel
probability measure $\widehat{\mu}$ such that $\widehat{\mu}$ has full support and
the m.d.s. $(X, \B, \mu, T)$ and $(\widehat{X}, \B_{\widehat{X}}, \widehat{\mu}, \widehat{T})$
are isomorphic, where $\B_{\widehat{X}}$ denotes the Borel $\sigma$-algebra of $\widehat{X}$,
in the sense that there are $X_0\in \B$, $\widehat{X}_0\in \B_{\widehat{X}}$ and a measure-preserving
bijection $\phi:X_0\rightarrow \widehat{X}_0$ with
$\mu(X_0)=\widehat{\mu}(\widehat{X}_0)=1$, $TX_0\subseteq X_0$, $\widehat{T}\widehat{X}_0\subseteq \widehat{X}_0$,
and $\phi\circ T=\widehat{T}\circ \phi$.
By the variational principle \cite[Theorem 8.6]{PW}, one has
$$h_{\rm top}(\widehat{T})=h_{\widehat{\mu}}(\widehat{T})=h_{\mu}(T)=0.$$

Since
$(X,\mathcal{B},\mu, T)$ is non-trivial,
$(\widehat{X},\widehat{T})$ is a non-trivial t.d.s..
Thus we can find two disjoint closed subsets $A,B$ of $\widehat{X}$ with
$\widehat{\mu}(A)>0$, $\widehat{\mu}(B)>0$. Set
$\mathcal{U}=\{\widehat{X}\setminus A,\widehat{X}\setminus B\}$.
Then $\mathcal{U}$ is an open cover of $\widehat{X}$, and for any
$F\in \Ind(A,B)$ we have
\begin{align*}
0&=h_{\rm top}(\widehat{T})\ge
h_{\rm top}(\widehat{T},\mathcal{U})=\lim_{n\to
+\infty}\frac{1}{n}
\log N(\bigvee_{i=0}^{n-1}\widehat{T}^{-i}\mathcal{U})\\
&\ge \limsup_{n\to +\infty}\frac{1}{n}
\log N(\bigvee_{i\in F\cap \{ 0,1,\dots,n-1\}}\widehat{T}^{-i}\mathcal{U})\\
&=\limsup_{n\to +\infty}\frac{1}{n} \log 2^{|F\cap \{
0,1,\dots,n-1\}|}=\overline{d}(F)\log 2.
\end{align*}
Hence $\overline{d}(F)=0$. Thus $\Ind(A, B)\cap \F_{\pud}=\emptyset$. It
follows from Example~\ref{bF:ex} and Proposition~\ref{same indep:prop}
that $\Ind(A,B)\cap \mathcal{F}_{\pubd}=\emptyset$.
Thus $(\widehat{X}, \B_{\widehat{X}}, \widehat{\mu}, \widehat{T})$ is not $\F_{\pubd}$-independent of
order $2$. Then by Remark~\ref{measure algebra:rem} $(X, \B, \mu, T)$ is not
$\F_{\pubd}$-independent of order $2$ either.
\end{proof}

We shall need the following consequence of Karpovsky and Milman's
generalization of the Sauer-Perles-Shelah lemma \cite{KM,Sau,SS}.

\begin{lem} \label{KM-lemma} (\cite{KM}). Given $r\ge 2$ in $\N$ and $\lambda> \ln(r-1)$
there exists a  constant $c>0$ depending only on $r$ and $\lambda$ such
that, for all $n\in \N$ and $S\subseteq \{1, 2,\dots,
r\}^{\{0,1,\dots,n-1\}}$ satisfying $|S|\ge e^{\lambda n}$ there is
an $I\subseteq \{0,1, 2,\dots, n-1\}$ with $|I|\ge cn$ and $S|_I =
\{1, 2, \dots , r\}^I$.
\end{lem}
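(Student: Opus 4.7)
The plan is to deduce the lemma from a non-asymptotic ``Pajor-style'' shattering inequality for $r$-ary alphabets, followed by a standard entropy estimate. Write $\mathrm{tr}_k(S)$ for the number of $k$-subsets $I\subseteq\{0,\dots,n-1\}$ with $S|_I=\{1,\dots,r\}^I$. The core combinatorial claim I would aim for is
\[
|S|\;\le\;\sum_{k=0}^{n}\mathrm{tr}_k(S)\,(r-1)^{n-k}, \quad (\star)
\]
and specializing $(\star)$ to an $S$ with no shattered subset of size $\ge k$ yields, since $\mathrm{tr}_j(S)\le\binom{n}{j}$ for $j<k$ and $\mathrm{tr}_j(S)=0$ for $j\ge k$, the non-asymptotic Karpovsky--Milman inequality $|S|\le\sum_{j=0}^{k-1}\binom{n}{j}(r-1)^{n-j}$.

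To prove $(\star)$ I would induct on $n$, splitting $S=\bigsqcup_{v=1}^{r}S_v$ by the value of the last coordinate and projecting to the first $n-1$ coordinates to obtain $T_v\subseteq\{1,\dots,r\}^{n-1}$. Set $P=\bigcup_v T_v$ and $Q=\bigcap_v T_v$. Two elementary observations drive the induction: each $y\in P\setminus Q$ lies in at most $r-1$ of the $T_v$'s while each $y\in Q$ lies in all $r$, so $|S|\le (r-1)|P|+|Q|$; and for $I\subseteq\{0,\dots,n-2\}$, $I$ is shattered by $S$ iff it is shattered by $P$, whereas $I\cup\{n-1\}$ is shattered by $S$ iff $I$ is shattered by $Q$, giving $\mathrm{tr}_k(S)=\mathrm{tr}_k(P)+\mathrm{tr}_{k-1}(Q)$. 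Applying the inductive hypothesis to $P$ and $Q$ and combining these with the multiplicative estimate telescopes into $(\star)$.

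For the asymptotic deduction, set $\psi(\alpha)=-\alpha\ln\alpha-(1-\alpha)\ln(1-\alpha)+(1-\alpha)\ln(r-1)$ on $(0,1/r]$ with $\psi(0):=\ln(r-1)$. A standard Chernoff/Stirling estimate gives $\sum_{j=0}^{\lfloor\alpha n\rfloor}\binom{n}{j}(r-1)^{n-j}\le e^{n\psi(\alpha)}$ for $\alpha\le 1/r$. Since $\psi$ is continuous with $\psi(0)=\ln(r-1)<\lambda$, I pick $c\in(0,1/r)$ with $\psi(c)<\lambda$. For all sufficiently large $n$, $|S|\ge e^{\lambda n}>e^{n\psi(c)}$ exceeds the right-hand side of the finite Karpovsky--Milman inequality with $k=\lceil cn\rceil$, forcing a shattered $I$ with $|I|\ge cn$; finitely many small $n$ are absorbed by shrinking $c$ so that $cn<1$ makes $I=\emptyset$ admissible. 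The crux of the plan is $(\star)$: the additive decomposition $\mathrm{tr}_k(S)=\mathrm{tr}_k(P)+\mathrm{tr}_{k-1}(Q)$ must mesh with the weighting $(r-1)^{n-k}$ through the index shift $k\mapsto k-1$, and it is this precise alignment---together with the fact that every $y\in Q$ really contributes $r$ copies rather than $r-1$ to $|S|$---that gives the non-trivial combinatorial content; without it the induction does not close.
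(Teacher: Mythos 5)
Your proposal is correct in substance, but note that the paper does not actually prove this lemma: it is quoted directly from Karpovsky--Milman \cite{KM} (the asymptotic form used here also appears in \cite{KL2}), so you are supplying a self-contained proof where the paper offers only a citation. Your route --- a Pajor-type trace inequality $|S|\le\sum_k \mathrm{tr}_k(S)(r-1)^{n-k}$ proved by splitting on the last coordinate, followed by the finite bound $|S|\le\sum_{j<k}\binom{n}{j}(r-1)^{n-j}$ and a Chernoff estimate --- is a clean and standard way to recover the original coordinate-density theorem. Two small points. First, your claim that ``$I\cup\{n-1\}$ is shattered by $S$ iff $I$ is shattered by $Q$'' is false in the ``only if'' direction: the witnesses of $\sigma\in T_v|_I$ may differ across $v$, so $I\cup\{n-1\}$ can be shattered by $S$ while $\bigcap_v T_v$ fails to shatter $I$ (e.g.\ $r=2$, $S=\{(1,1),(2,2)\}$). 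Consequently the identity $\mathrm{tr}_k(S)=\mathrm{tr}_k(P)+\mathrm{tr}_{k-1}(Q)$ should be the inequality $\mathrm{tr}_k(S)\ge\mathrm{tr}_k(P)+\mathrm{tr}_{k-1}(Q)$; since only the direction ``$I$ shattered by $Q$ implies $I\cup\{n-1\}$ shattered by $S$'' is used, and the two families of subsets are disjoint, the induction still closes and $(\star)$ holds. Second, the hedge about ``sufficiently large $n$'' is unnecessary: the bound $\sum_{j\le\lfloor cn\rfloor}\binom{n}{j}(r-1)^{n-j}\le e^{n\psi(c)}$ is valid for every $n\ge 1$ once $0<c\le 1/r$ (optimize $t^{-cn}(t+r-1)^n$ at $t=c(r-1)/(1-c)\le 1$), so choosing $c$ with $\psi(c)<\lambda$ immediately forces a shattered set of size at least $\lceil cn\rceil\ge cn$ for all $n$, with no separate treatment of small $n$.
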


Now we are ready to prove Theorem~\ref{kfpd}.

\begin{proof}[Proof of Theorem~\ref{kfpd}] When $(X,\mathcal{B},\mu,T)$ is a trivial system, this is obvious.
So we suppose that $(X,\mathcal{B},\mu,T)$ is non-trivial.
(1)$\Rightarrow$(2) is obvious.



(3)$\Rightarrow$(1): We claim first that $(X, \B, \mu)$ is {\it
non-atomic} in the sense that $\mu(\{x\})=0$ for every $x\in X$.
In fact, since $T$ has completely positive entropy, it is ergodic.
If $\mu(\{x\})>0$ for some $x\in X$, then we can find some $n\in
\N$ such that $x, Tx, \dots, T^{n-1}x$ are pairwise distinct,
$T^nx=x$, and $\mu(x)=\mu(Tx)=\dots=\mu(T^{n-1}x)=\frac{1}{n}$.
If $n>1$, denoting
by $\beta$ the partition of $X$ into $\{x\}$ and its complement, we have $h_{\mu}(T, \beta)=0$.
Thus $n=1$, which means that $(X, \B, \mu, T)$ is trivial. Therefore $(X, \B, \mu)$ is non-atomic.

Given a tuple
$(A_1,\dots,A_k)$ of sets in $\mathcal{B}$ with positive measures,
we are going to show that $\Ind(A_1,\dots,A_k)\cap
\mathcal{F}_{\pubd}\neq \emptyset$. Without loss of generality, we
may assume that $A_1, \dots, A_k$ are pairwise disjoint.

Every non-atomic Lebesgue space is isomorphic to the closed unit interval endowed with its Borel $\sigma$-algebra and
the Lebesgue measure \cite[Theorem 17.41]{K1}.
It follows that there exist an $r\in \mathbb{N}$ and a
measurable partition $\alpha=\{B_1,\dots,B_r\}$ of $X$ such that $r>
k$, $\mu(B_i)=\frac{1}{r}$ for $i=1,2,\dots,r$, and $B_j$ is a
subset of $A_j$ for $j=1,2,\dots,k$. To show $\Ind(A_1,\dots,A_k)\cap
\mathcal{F}_{\pubd}\neq \emptyset$, it is sufficient to show
$\Ind(B_1,\dots,B_r)\cap \mathcal{F}_{\pubd}\neq \emptyset$.

By Theorem~\ref{CPE:thm} we have  $\lim_{n\to +\infty} h_\mu(T^n,\alpha)=H(\alpha)=\ln r$. Thus
there exists $\ell \in \mathbb{N}$ such that
$\lambda:=h_\mu(T^\ell,\alpha)>\ln(r-1)$. Then
$\frac{1}{n}H(\bigvee_{i=0}^{n-1}T^{-\ell i}\alpha)\ge
\lambda>\ln(r-1)$ for all $n\in \mathbb{N}$. For any given finite
measurable partition $\beta$ of $X$, we define
$$|\beta|_\mu=| \{B\in \beta: \mu(\beta)>0\}|.$$
Then $|\bigvee_{i=0}^{n-1}T^{-\ell i}\alpha|_\mu\ge
e^{H(\bigvee_{i=0}^{n-1}T^{-\ell i}\alpha)}\ge e^{\lambda n}$
for all $n\in \mathbb{N}$.

Now combing this with Lemma \ref{KM-lemma}, we see that there exists a constant
$c>0$ depending on only $r$ and $\lambda$ such that, for any $n\in
\mathbb{N}$ there is an $I_n\subseteq \{0,1, 2,\dots, n-1\}$ with
$|I_n|\ge cn$ and $\mu(\bigcap_{i\in I_n}T^{-\ell i}B_{s(i)})>0$ for
any $s\in\{1, 2, \dots , r\}^{I_n}$.
This implies that $\ell I_n\in
\Ind(B_1, \dots,B_r)$ for each $n\in \N$.
From Lemma~\ref{independence density:lemma} we conclude that
$\Ind(B_1, \dots, B_r)\cap \F_{\pd}\neq \emptyset$.

(2)$\Rightarrow$(3): Assume that $(X,\mathcal{B},\mu,T)$ is
$\F_{\pubd}$-independent of order $2$.
Note that the definitions of independence sets and entropy apply to more general
measure-theoretical dynamical systems in which the probability space does not have to be a Lebesgue
space.
In this sense $(X, \P(T), \mu, T)$ is also $\F_{\pubd}$-independent of order $2$
and has entropy $0$.
Since $(X, \B, \mu)$ is a Lebesgue space, it is easy to see
that $\B$ is separable under the semi-metric $d(A, B)=\mu(A\Delta B)$.
Then $\P(T)$ is also separable under this semi-metric.
It follows that there is a m.d.s. $(Y, \J, \nu, S)$ (i.e., $(Y, \J, \nu)$ is a Lebesgue space) such that
the measure algebra triples associated to
$(X, \P(T), \mu, T)$ and $(Y, \J, \nu, S)$ in Remark~\ref{measure algebra:rem} are isomorphic
\cite[Proposition 5.3]{HF}.
Then $(Y, \J, \nu, S)$ is also $\F_{\pubd}$-independent of order $2$
and has entropy $0$.
By Lemma~\ref{indep to positive entropy:lem} $(Y, \J, \nu, S)$ is trivial. Thus
$\P(T)$ consists of measurable subsets of $X$ with measure $0$ or $1$. That is, $(X, \B, \mu, T)$
has completely positive entropy.
\end{proof}

\subsection{Non-existence of $\F_{\rms}$-independent m.d.s.}
In this subsection we establish the somewhat surprising result that there is no non-trivial m.d.s. which is
$\F_{\rms}$-independent. In the following, we aim to show
that for any non-periodic m.d.s. $(X,\mathcal{B},\mu,T)$,
there exists $A\in \mathcal{B}$ with $\mu(A)>0$ such that $\Ind(A)$
does not contain a syndetic set.

A m.d.s. $(X, \B, \mu, T)$
is called {\it non-periodic} or {\it free} if $\mu(\{x\in X:
T^nx=x\})=0$ for every $n\in \N$.
It is easy to see that
an ergodic m.d.s. $(X, \B, \mu, T)$
is non-periodic if and only if $(X, \B,
\mu)$ is {\it non-atomic} in the sense that $\mu(x)=0$ for every $x\in X$.

\begin{thm} \label{notsyndetic}
Let $(X, \B, \mu, T)$ be a non-periodic m.d.s..
Then for any $\varepsilon>0$
there exists $A\in \B$ with $\mu(A)>1-\varepsilon$ such that
$\Ind(A)$ does not contain any syndetic set.
\end{thm}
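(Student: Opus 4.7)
The plan is to mirror the topological argument of Lemma~\ref{no minimal syndetic tds:lemma} in the measure-theoretic setting, replacing the minimal-subshift coding by the long column structure of a Rokhlin tower.  Since $T$ is non-periodic, for any $N\in\N$ Rokhlin's lemma produces $B\in\B$ such that $B,T^{-1}B,\dots,T^{-(N-1)}B$ are pairwise disjoint and of total measure exceeding $1-\varepsilon/2$.  Writing $B_i=T^{-i}B$ and $\T:=\bigsqcup_{i=0}^{N-1}B_i$, the column structure asserts $T^jx\in B_{i-j}$ for $x\in B_i$ and $0\le j\le i$, so that orbits descend through the labelled levels until they eventually leave them.

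I would then define $A\subseteq \T$ by selecting a binary sequence $c\in\{0,1\}^N$ whose density of $1$'s exceeds $1-\varepsilon/2$ and setting $A:=\bigsqcup\{B_i:c_i=1\}$; this gives $\mu(A)>1-\varepsilon$ and $A\cap(X\setminus\T)=\emptyset$.  Suppose toward a contradiction that $F=\{n_0<n_1<\cdots\}$ is syndetic with gap bound $\ell$ and $F\in\Ind(A)$.  After translating $F$ by $-n_0$ via Lemma~\ref{ye1}(3) we may assume $0\in F$.  For any finite $J\subseteq F$ with $0\in J$, the intersection $\bigcap_{j\in J}T^{-j}A$ is contained in $A\subseteq \T$, and for $x\in B_i$ with $i\ge\max J$ one has $T^jx\in B_{i-j}$ for every $j\in J$, so $x\in\bigcap_{j\in J}T^{-j}A$ iff $c_{i-j}=1$ for every $j\in J$.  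Hence $F\in\Ind(A)$ would force, for every such $J$, the existence of a level $i\in[\max J,N-1]$ satisfying $(i-J)\cap E=\emptyset$, where $E:=\{i:c_i=0\}$.

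The heart of the proof is the combinatorial choice of $c$, carried out via Theorem~\ref{infinite words:lemma}.  For fixed $\ell$, set $m=4\ell+2$ and construct forbidden $m$-block families $\mathcal{A}_n\subseteq\Lambda_2^m$ with $|\mathcal{A}_n|\le\ell$ encoding exactly the patterns on $c$ that would let some deep level $i$ witness $(i-J)\cap E=\emptyset$ for a syndetic $F$ of gap bound $\ell$; Theorem~\ref{infinite words:lemma} then produces a sequence $x\in\Sigma_2$ avoiding these patterns, whose initial segment serves as $c$.  A diagonal amalgamation of such families over $\ell\in\N$ handles all syndetic $F$ simultaneously with a single application of the infinite-words lemma.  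The main obstacle, and the point at which the measurable argument genuinely exceeds the topological one, is the treatment of the shallow levels $i<\max J$ and of iterates $T^jx$ with $j>i$, which may re-enter the tower at levels indexed by some high $k$: to force $\mu(\bigcap_{j\in J}T^{-j}A)=0$ exactly (rather than merely small), one must either take $N$ large enough that such long-return contributions vanish—using non-periodicity and Kac's lemma to show that the return time of a typical $y\in B$ exceeds $\max J$ with total measure $1-o(1)$—or incorporate the return-time structure directly into the forbidden families $\mathcal{A}_n$ so that $c$ also forbids $A$ at the relevant long-return levels.  This coordination of deep-tower and long-return constraints is the essential combinatorial difficulty, and is precisely where Theorem~\ref{infinite words:lemma} is used decisively.
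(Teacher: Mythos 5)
There are two genuine gaps here, and together they are fatal to the plan as written. First, a quantifier reversal. In Lemma~\ref{no minimal syndetic tds:lemma} the combinatorial Theorem~\ref{infinite words:lemma} is applied \emph{after} the syndetic set $F$ is given: the forbidden families $A_j$ are built from the specific gap structure $n_{j+1}-n_j,\dots$ of $F$ and from a word $a$ that minimality forces to appear in every orbit, and it is minimality that produces the contradiction. In your plan the sequence $c$ (hence the set $A$) must be fixed \emph{before} $F$ is quantified, and must simultaneously defeat every syndetic set of every gap bound. Theorem~\ref{infinite words:lemma} only lets you forbid at most $\ell$ words of length about $4\ell$ at each position; even for a single fixed $\ell$ the collection of finite configurations $J$ arising from syndetic sets of gap bound $\ell$ is not of bounded cardinality, and your ``diagonal amalgamation over $\ell$'' is asserted rather than constructed. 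Nothing in the appendix theorem supplies a single $c$ with the universal avoidance property you need, and there is no substitute for the role minimality plays in the topological argument.

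Second, and independently, the exactness problem you yourself flag is not resolved. To show $F\notin\Ind(A)$ you must exhibit a finite $J\subseteq F$ with $\mu\bigl(\bigcap_{j\in J}T^{-j}A\bigr)=0$ \emph{exactly}; smallness is irrelevant, since independence only requires positive measure. A single Rokhlin tower controls at most $N$ steps of an orbit, and the points lying in levels below $\max J$ or in the residual set---a set of measure roughly $(\max J)\mu(B)+\varepsilon/2>0$---re-enter the tower at positions governed by their individual return times, which neither $c$ nor the forbidden families can anticipate. Kac's lemma controls average return times, not the nullity of this exceptional set. This is precisely why the paper's actual proof takes a completely different, softer route that never touches Theorem~\ref{infinite words:lemma}: it builds a forward-invariant countable (hence null, by non-atomicity on each ergodic component) set $W$ meeting the support of every ergodic component, takes a compact $K\subseteq X\setminus W$ with $\mu(K)>1-\varepsilon$, shows by ergodicity that the sets $A_n=\bigcap_{j\in\Z_+}\bigcup_{i=0}^{n-1}T^{-j-i}K$ are null, and then chooses $A$ compact inside $K\setminus\bigcup_n A_n$. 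Compactness of $A$ upgrades ``$\mu\bigl(\bigcap_{j\in J}T^{-j}A\bigr)>0$ for all finite $J\subseteq F$'' to the existence of a single point whose entire $F$-orbit lies in $A\subseteq K$, and avoidance of the $A_n$ then forces $F$ to have arbitrarily long gaps. If you want to salvage a tower-based proof you would need something like a refining sequence of towers with controlled return behaviour, which is a substantially different (and harder) construction than the one you sketch.
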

\begin{proof}
Endow $X$ with a Polish topology such that $\B$ is the corresponding Borel $\sigma$-algebra.
Replacing the Polish topology on $X$ by a
finer one if necessary \cite[Theorem 13.11 and Lemma 13.3]{K1}, we may
assume that $T$ is continuous.
Let $\varepsilon>0$. We claim that there is a compact subset $K$ of $X$ such that $\mu(K)>1-\varepsilon$ and
$A_n:=\bigcap_{j\in
\Z_+}\bigcup^{n-1}_{i=0}T^{-j-i}K$ has measure $0$ for every $n\in \N$. Assuming this claim
let us show how it implies the theorem.

Since $\mu(\bigcup_{n\in \N}A_n)=0$, one has $\mu(K\setminus
(\bigcup_{n\in \N}A_n))=\mu(K)>1-\varepsilon$. By the regularity of
$\mu$ \cite[Theorem 17.11]{K1},  we can find a compact set $A$ contained in $ K\setminus
(\bigcup_{n\in \N}A_n)$ such that $\mu(A)>1-\varepsilon$. We shall show that $\Ind(A)$ does not
contain any syndetic set.

Let $F\in \Ind(A)$ be nonempty. Replacing $F$ by $F-\min F$ if
necessary, we may assume that $0\in F$. One has $\mu(\bigcap_{j\in J}T^{-j}A)>0$ and
hence $\bigcap_{j\in
J}T^{-j}A\neq \emptyset$ for every nonempty finite subset $J$ of
$F$. Since $A$ is compact, we conclude that $\bigcap_{j\in
F}T^{-j}A$ is nonempty. Take $x\in \bigcap_{j\in F}T^{-j}A$. Then
$x\in A$ and $T^jx\in A\subseteq  K$ for every $j\in F$. For each $n\in \N$ one
has $x\not\in A_n$, and hence for some $j_n\in \Z_+$ none of
$T^{j_n}x, T^{j_n+1}x, \dots, T^{j_n +n-1}x$ is in $K$. Then
$[j_n, j_n+n-1]\cap F=\emptyset$. Therefore $F$ is not
syndetic.

We are left to prove the above claim. Since the main idea of the proof is well
illustrated in the case $\mu$ is ergodic, we consider this case first.

So assume that $\mu$ is ergodic. Since $(X, \B, \mu, T)$
is non-periodic, by the comment before Theorem~\ref{notsyndetic},  $(X, \B,
\mu)$ is non-atomic. Replacing $X$ by $\supp(\mu)$ if necessary, we may assume that $\mu$ has full support.
Take $x\in X$ and set $W=\{T^nx: n\in \Z_+\}$. Then $TW\subseteq W$, and $W$ is nonempty and countable.
Since $\mu$ is non-atomic, one has $\mu(W)=0$, and hence $\mu(X\setminus W)=1$.
By the regularity of $\mu$,  we can
find a compact set $K$ contained in $X\setminus W$ such that
$\mu(K)>1-\varepsilon$.
For any $n\in
\N$, $\bigcup^n_{i=0}T^{-i}K$ is a closed subset of $X$ with $\bigcup^n_{i=0}T^{-i}K\neq X$,
since $W\cap (\bigcup^n_{i=0}T^{-i}K)=\emptyset$. As $\mu$ has full support,
$\mu(\bigcup^n_{i=0}T^{-i}K)<1$ for all $n\in \N$. Note that $A_n\in \B$ and $T^{-1}A_n\supseteq A_n$.
Since $\mu$ is ergodic and $\mu(A_n)\le \mu(\bigcup^n_{i=0}T^{-i}K)<1$, we get $\mu(A_n)=0$.
This finishes the proof in the case
$\mu$ is ergodic.

Now we consider the general case, using the ergodic decomposition of $(X, \B, \mu, T)$.

Denote by $P(X)$ the set of all probability Borel measures on $X$, and endow it with the $\sigma$-algebra
generated by the functions $\mu'\mapsto \mu'(A)$ on $P(X)$ for all $A\in \B$ \cite[Section 17.E]{K1}.

From the ergodic decomposition of $(X, \B, \mu, T)$ we know that
there exist a set $X'\in \B$ with $\mu(X')=1$ and $TX'\subseteq
X'$, a Lebesgue space $(Y, \J, \nu)$, a measurable map $\pi:
X'\rightarrow Y$, a measurable map $y\mapsto \mu_y$ from $Y$ to
$P(X)$, and a set $Y'\in \J$ with $\nu(Y')=1$ such that $\pi
T=\pi$, $\pi\mu=\nu$, $\mu(A)=\int_Y \mu_y(A)\, d\nu(y)$ for all
$A\in \B$, and $\mu_y(\pi^{-1}(y))=1$ and $T\mu_y=\mu_y$ and $(X,
\B, \mu_y, T)$ is ergodic for every $y\in Y'$ \cite[Theorem
3.42]{EG}. (\cite[Theorem 3.42]{EG} was only proved for invertible
m.d.s., but it is easy to see that the proof works for any
m.d.s..)

Set $W_1=\{x\in X: T^nx=x \text{ for some } n\in \N\}$. Clearly $W_1$ is in $\B$.
By assumption $0=\mu(W_1)=\int_Y\mu_y(W_1)\, d\nu(y)$. Thus $\mu_y(W_1)=0$ for $\nu$ a.e. $y\in Y$.
Replacing $Y'$ by a smaller measurable set if necessary, we may assume that $\mu_y(W_1)=0$ for every $y\in Y'$.
Since $(X, \B, \mu_y, T)$ is ergodic for every $y\in Y'$, it follows that $\mu_y$ is non-atomic for every
$y\in Y'$.


Endow $Y$ with a Polish topology such that $\J$ is the corresponding Borel $\sigma$-algebra.
Replacing the Polish topology on $X$ by a
finer one if necessary, we may
assume that $\pi$ is continuous.

Denote by $F(X)$ the set of all closed subsets of $X$, and endow it with the Effros Borel structure, i.e.,
the $\sigma$-algebra generated by the sets $\{Z\in F(X): Z\cap U\neq \emptyset\}$ for all
open subsets $U$ of $X$.
The map $\phi: P(X)\rightarrow F(X)$ sending
each $\mu'$ to $\supp(\mu')$ is measurable \cite[Exercise 17.38]{K1}.
By the Kuratowski-Ryll-Nardzewski selection theorem \cite[Theorem 12.13]{K1} we can find
a measurable map $\psi: F(X)\rightarrow X$ such that $\psi(Z)\in Z$ for each nonempty $Z\in F(X)$.

Note that $\supp(\mu_y)\subseteq \pi^{-1}(y)$ for every $y\in Y'$.
Thus the map $\varphi_n: Y'\rightarrow X$ sending $y$ to
$T^n(\psi(\phi(\mu_y)))$ is measurable and injective for each
$n\in \Z_+$. Recall that a measurable space is a {\it standard
Borel space} if the $\sigma$-algebra is the Borel $\sigma$-algebra
for some Polish topology on the set. A measurable subset of a
standard Borel space together with the restriction of the
$\sigma$-algebra to the subset is also a standard Borel space
\cite[Corollary 13.4]{K1}. Thus $Y'$ together with the restriction
of $\J$ on $Y'$ is a standard Borel space. The Lusin-Souslin
theorem says that the image of any injective measurable map from a
standard Borel space to another standard Borel space is measurable
\cite[Corollary 15.2]{K1}. Thus the set $W:=\bigcup_{n\in
\Z_+}\varphi_n(Y')$ is in $\B$. Note that $TW\subseteq W$,  and
$W\cap \supp(\mu_y)$ is nonempty and countable for every $y\in
Y'$.

Since $\mu_y$ is non-atomic for every $y\in Y'$, one has
$\mu_y(W)=0$ for every $y\in Y'$. Thus $\mu(W)=\int_Y\mu_y(W)\,
d\nu(y)=0$, and hence $\mu(X\setminus W)=1$. By the regularity of
$\mu$, we can find a compact set $K$ contained in $X\setminus W$
such that $\mu(K)>1-\varepsilon$. For any $n\in \N$ and $y\in Y'$,
$\supp(\mu_y)\cap (\bigcup^n_{i=0}T^{-i}K)$ is a closed subset of
$\supp(\mu_y)$ with $\supp(\mu_y)\cap (\bigcup^n_{i=0}T^{-i}K)\neq
\supp(\mu_y)$, since $W\cap (\bigcup^n_{i=0}T^{-i}K)=\emptyset$
and $W\cap \supp(\mu_y)\neq \emptyset$. Thus
$\mu_y(\bigcup^n_{i=0}T^{-i}K)<1$ for all $n\in \N$ and $y\in Y'$.

We still have $A_n\in \B$ and $T^{-1}A_n\supseteq A_n$.
For each $y\in Y'$, since $(X, \B, \mu_y, T)$ is ergodic, $\mu_y(A_n)$ is
equal to either $0$ or $1$.
By the above paragraph we have
$\mu_y(A_n)\le \mu_y(\bigcup^{n-1}_{i=0}T^{-i}K)<1$ for each $y\in Y'$. Thus
$\mu_y(A_n)=0$ for each $y\in Y'$. Therefore $\mu(A_n)=\int_Y\mu_y(A_n)\, d\nu(y)=0$,
as desired. This proves the claim and finishes the proof of the theorem.
\end{proof}

Now we are able to show
\begin{thm}\label{nofs} There is no non-trivial m.d.s. which is
$\F_{\rms}$-independent of order $2$.
\end{thm}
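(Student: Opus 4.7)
The plan is to argue by contradiction, combining Theorem~\ref{notsyndetic} with a direct analysis of the a.e.\ periodic case. The crucial initial observation is the inclusion $\Ind(A,A) \subseteq \Ind(A)$, so the order-$2$ hypothesis applied with $A_1=A_2=A$ forces $\Ind(A) \cap \F_{\rms} \neq \emptyset$ for every $A \in \B$ with $\mu(A)>0$.

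First I would show the system is a.e.\ periodic. The set $Z = X \setminus \bigcup_{n \in \N} P_n$ is $T$-invariant, and independence sets for a subset of $Z$ coincide whether computed in the ambient space $X$ or in the normalized restriction $(Z,\B|_Z,\mu_Z,T|_Z)$. So if $\mu(Z)>0$, applying Theorem~\ref{notsyndetic} to the non-periodic restriction produces an $A \subseteq Z$ with $\mu(A)>0$ and $\Ind(A) \cap \F_{\rms}=\emptyset$, contradicting the initial observation.

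Next I would rule out non-ergodicity. If $T$ is not ergodic there is $A \in \B$ with $T^{-1}A=A$ (literal equality, obtained by replacing $A$ with $\bigcup_{m \ge 0} \bigcap_{n \ge m} T^{-n}A$) and $0<\mu(A)<1$. Setting $A_1=A,\,A_2=X\setminus A$, both $T$-invariant, any two distinct $j_1,j_2$ in a supposed syndetic $F \in \Ind(A_1,A_2)$ with $s=(1,2)$ would require $\mu(A_1 \cap A_2)>0$, which is absurd.

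Finally I would handle the ergodic a.e.\ periodic non-trivial case. By ergodicity $\mu(P_n) \in \{0,1\}$, so pick the smallest $n$ with $\mu(P_n)=1$; then $T^n = \mathrm{id}$ a.e. For any $B$ of positive measure, $\bigcup_{i=0}^{n-1}T^iB$ is $T$-invariant of positive measure, hence of full measure by ergodicity, yielding the atom bound $\mu(B) \ge 1/n$. This forces $(X,\B,\mu)$ to consist of exactly $n$ atoms of mass $1/n$ cyclically permuted by $T$, with $n \ge 2$ by non-triviality. Labelling the atoms $x_0,\ldots,x_{n-1}$ with $Tx_i=x_{i+1 \bmod n}$ and taking $A_1=\{x_0\},\,A_2=\{x_1\}$, for any $j_1<j_2$ in $F \in \Ind(A_1,A_2)$ the choice $s=(1,1)$ forces $j_2-j_1 \equiv 0 \pmod n$ while $s=(1,2)$ forces $j_2-j_1 \equiv 1 \pmod n$, incompatible for $n \ge 2$. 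The main obstacle is bridging the order-$1$ content of Theorem~\ref{notsyndetic} with the order-$2$ hypothesis (handled by the trivial inclusion above) and classifying ergodic a.e.\ periodic systems cleanly via the atom bound, in lieu of invoking the full ergodic decomposition machinery.
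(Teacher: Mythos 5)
Your proof is correct, and it diverges from the paper's in the second half. Both arguments share the first step: from the trivial inclusion $\Ind(A,A)\subseteq\Ind(A)$ and Theorem~\ref{notsyndetic} one concludes the system must be a.e.\ periodic --- and you are right to make explicit the passage to the normalized restriction on the non-periodic part $Z$, a point the paper's one-line deduction ``By Theorem~\ref{notsyndetic}, $T$ is a.e.\ periodic'' leaves implicit. Where the paper goes next is different: it notes that $\F_{\rms}\subseteq\F_{\inf}$, so order-$2$ $\F_{\rms}$-independence gives weak mixing via Theorem~\ref{wmmds}; it then takes the set $A_n$ of points of exact period $n$ with $\mu(A_n)>0$, uses the tower construction from Halmos to produce $B\subseteq A_n$ with $B,TB,\dots,T^{n-1}B$ disjoint, and observes that $N(B,B)\cap N(B,TB)=\emptyset$ for $n\ge 2$ (and $N(B,X\setminus B)=\emptyset$ for $n=1$), contradicting weak mixing. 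You instead bypass weak mixing entirely: you kill non-ergodicity with a strictly invariant set (for which $(A,X\setminus A)$ admits no two-element independence set at all), and in the ergodic a.e.\ periodic case you use the atom bound $\mu(B)\ge 1/n$ to identify the system as a cyclic rotation on $n$ atoms, where two adjacent atoms visibly admit no two-element independence set since $s\equiv 1$ forces $j_2-j_1\equiv 0$ and $s=(1,2)$ forces $j_2-j_1\equiv 1\pmod n$. What your route buys is self-containedness --- no appeal to Theorem~\ref{wmmds} or to the Rokhlin-type tower lemma, and the contradiction is with finite (two-element) independence sets rather than with syndeticity per se --- at the modest cost of the ergodicity reduction and the classification of ergodic a.e.\ periodic systems. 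What the paper's route buys is brevity once Theorem~\ref{wmmds} is in hand, since the tower argument needs neither ergodicity nor any structural classification.
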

\begin{proof} Assume the contrary that there exists such a system $(X, \B, \mu, T)$.
By Theorem~\ref{wmmds}, $T$ is weakly mixing.

By Theorem~\ref{notsyndetic}, $T$ is a.e. periodic. Then
the set $A_n=\{x\in X: T^nx=x, T^jx\neq x \text{ for all } 1\le j<n\}$ has positive measure for some $n\in \N$.
Note that $TA_n=A_n$. By \cite[page 70]{Halmos} we can find
$B\subseteq A_n$ such that $B\in \B$, $\mu(B)=\mu(A_n)/n$, and $B, TB, \dots, T^{n-1}B$ are pairwise disjoint.
If $n\ge 2$, then  $N(B, B)\cap N(B, TB)= \emptyset$, contradicts that $(X, \B, \mu, T)$ is weakly mixing.
Thus $\mu(A_n)=0$ for every $n\ge 2$. Then $\mu(A_1)=1$. Since $(X, \B, \mu, T)$ is non-trivial, we can find
some $B\subseteq A_1$ such that $B\in \B$ and $0<\mu(B)<1$. Then $N(B, X\setminus B)=\emptyset$, again contradicting
that $(X, \B, \mu, T)$ is weakly mixing.
%
\end{proof}

\begin{rem} \label{syndetic mds:rem}
Using Theorem~\ref{no minimal syndetic tds:thm} one can strengthen
Theorem~\ref{nofs} as follows. For any nontrivial ergodic m.d.s.
$(X, \B, \mu, T)$, Rosenthal's extension of the  Jewett-Krieger
theorem to non-invertible m.d.s. \cite{Ros88} says that there
exists a t.d.s. $(\widehat{X},\widehat{T})$ with a unique
invariant Borel probability measure $\widehat{\mu}$ such that
$\widehat{\mu}$ has full support and the m.d.s. $(X, \B, \mu, T)$
and $(\widehat{X}, \B_{\widehat{X}}, \widehat{\mu}, \widehat{T})$
are isomorphic, where $\B_{\widehat{X}}$ denotes the Borel
$\sigma$-algebra of $\widehat{X}$. Then
$(\widehat{X},\widehat{T})$ is minimal (see for example
\cite[Theorem 6.17]{PW}). Furthermore, the proof in \cite{Ros88}
shows that we can choose $\widehat{X}$ to be a Cantor set. For any
real-valued continuous function $f$ on $X$, the sequence
$\{\frac{1}{n+1}\sum_{i=0}^nf\circ \widehat{T}^i\}_{n\in \Z_+}$ of
functions on $X$ converges to the constant function
$\int_{\widehat{X}}f(x)\, d\widehat{\mu}(x)$ uniformly as $n\to
+\infty$ \cite[Theorem 6.19]{PW}. By Theorem~\ref{no minimal
syndetic tds:thm} we can find disjoint nonempty clopen subsets
$\widehat{V}_0$ and $\widehat{V}_1$ of $\widehat{X}$ such that
$\Ind(\widehat{V}_0, \widehat{V}_1)\cap \F_{\rms}=\emptyset$. Say,
$\widehat{V}_j$ corresponds to $V_j\in \B$ for $j=1, 2$. Then
$V_0$ and $V_1$ are disjoint and have positive measures, and
$\Ind(V_0, V_1)\cap \F_{\rms}=\emptyset$. Furthermore, taking $f$
to be $1_{\widehat{V}_j}$,  we see that the sequence
$\{\frac{1}{n+1}\sum_{i=0}^n1_{V_j}\circ T^i\}_{n\in \Z_+}$
converges to $\mu(V_j)$ in $L^{\infty}(X, \mu)$ for $j=1, 2$.
\end{rem}

\subsection{Finite product}


By contrast to the topological case, it is well known that the
product of two weakly mixing m.d.s. is still weakly mixing \cite[Proposition 4.6]{HF}. In view
of Theorem~\ref{wmmds}, this means that the products of
finitely many $\F_{\inf}$-independent ($\F_{\ip}$-independent resp.) m.d.s. are
$\F_{\inf}$-independent ($\F_{\ip}$-independent resp.).

Meanwhile, it is known that the product of finitely many
invertible completely positive entropy m.d.s. has completely
positive entropy \cite[Theorem 4.14]{Parry}. As the topological
case, every m.d.s. has a {\it natural extension} \cite[Page
240]{CFS}, which is always invertible. The natural extension of a
completely positive entropy m.d.s. has completely positive entropy
\cite[13.8]{Rohlin} (one can also deduce this from
Theorem~\ref{CPE:thm} and the fact that the natural extension of a
m.d.s. is the inverse limit of a sequence of m.d.s.
identical to the original one). It follows that
  the product of finitely many
 completely positive entropy m.d.s. has completely positive entropy.
In view of Theorem~\ref{kfpd}, this means that the product of finitely
many $\F_{\pd}$-independent m.d.s. remains $\F_{\pd}$-independent. Thus we
make the following conjecture.

\begin{conj} For any  family $\F$,  the product of finitely many
$\F$-independent m.d.s. remains $\F$-independent.
\end{conj}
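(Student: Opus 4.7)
The plan is to adapt the strategy of Lemma~\ref{productminimal} from the topological setting to the measurable one. Suppose $(X_i, \B_i, \mu_i, T_i)$ are $\F$-independent m.d.s. for $i=1,2$, and let $\bfA_i = (A_{i,1}, \ldots, A_{i,k})$ be tuples of positive-measure sets in $X_i$. Since $(\mu_1\otimes\mu_2)(\bigcap_{j\in J}(T_1\times T_2)^{-j}(A_{1,s(j)}\times A_{2,s(j)})) = \mu_1(\bigcap_{j\in J}T_1^{-j}A_{1,s(j)}) \cdot \mu_2(\bigcap_{j\in J}T_2^{-j}A_{2,s(j)})$, a set $F\subseteq\Z_+$ is an independence set for the product tuple $(A_{1,j}\times A_{2,j})_{j=1}^k$ if and only if it is simultaneously an independence set for both $\bfA_1$ and $\bfA_2$. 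Hence the task reduces to producing an $F\in\F$ lying in $\Ind(\bfA_1)\cap\Ind(\bfA_2)$, and by induction on the number of factors it suffices to handle the case of two.

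By hypothesis we have $S_i\in\Ind(\bfA_i)\cap\F$ for $i=1,2$. The first step is to reduce the problem to a purely combinatorial statement: given any two sets $S_1,S_2\in\F$, exhibit subsets $K_1,K_2\subseteq\Z_+$ with $1_{K_i}$ lying in the shift orbit closure $X_{S_i}$ of $1_{S_i}$ in $\{0,1\}^{\Z_+}$, such that $K_1\cap K_2\in\F$. Granting this, we are done: every finite subset of $K_i$ is a shift of a finite subset of $S_i$, and the collection $\Ind(\bfA_i)$ is closed under shifts and under taking subsets by the measurable analogue of Lemma~\ref{ye1}, so $K_i\in\Ind(\bfA_i)$, and consequently $K_1\cap K_2\subseteq K_i$ also lies in $\Ind(\bfA_i)$. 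The combinatorial reduction therefore isolates the entire difficulty in a statement about $\F$ alone, independent of any specific m.d.s..

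The main obstacle is establishing this combinatorial step for an arbitrary family $\F$. In the three instances treated by Lemma~\ref{productminimal}, one exploits specific dynamical features of the subshift $X_{S_i}$: for $\F_{\pubd}$ one uses the multiplicativity of independence densities together with Lemma~\ref{independence density:lemma}; for $\F_{\ps}$ one invokes the existence of a minimal point in a minimal subset of the product of the two orbit closures; for $\F_{\rs}$ the conclusion is immediate from elementary arithmetic. None of these handles generalize to an arbitrary hereditary $\F\subseteq\F_{\inf}$, and for pathological choices of $\F$ (for instance, families concentrated on very thin or non-shift-invariant scales) the combinatorial statement could plausibly fail, in which case the conjecture itself would need to be restricted—perhaps to families of the form $b\F'$, or to those satisfying the dynamical Ramsey property of Definition~\ref{dynamical Ramsey property:def}.

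A second, more measure-theoretic route would bypass the combinatorial reduction altogether. One would work directly in the product system $(X_1\times X_2, T_1\times T_2)$ and, given an independence set $S_1\in\F$ for $\bfA_1$, try to refine it inside $X_2$ using a Birkhoff- or Wiener-Wintner-style ergodic averaging along $S_1$, producing a subset still in $\F$ that serves as an independence set for $\bfA_2$ as well. The hurdle here is that such averaging statements typically require density-like hypotheses on $\F$, so to cover all families one would need either a new abstract principle relating $\F$-independence to a joint ergodic property, or a proof proceeding family-by-family for each of the natural families listed in Section~\ref{c3}.
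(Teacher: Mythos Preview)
The statement you are attempting to prove is labeled a \emph{Conjecture} in the paper, and the paper provides no proof of it. The authors only verify the special cases $\F=\F_{\inf}$, $\F_{\ip}$ (via weak mixing) and $\F=\F_{\pd}$ (via completely positive entropy), and then explicitly pose the general statement as open.

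Your proposal is not a proof either, and you seem aware of this: you reduce the problem to a combinatorial assertion about the family $\F$ (produce $K_i$ with $1_{K_i}\in X_{S_i}$ and $K_1\cap K_2\in\F$), and then correctly observe that this assertion is exactly what Lemma~\ref{productminimal} establishes only for the specific families $\F_{\pubd}$, $\F_{\ps}$, $\F_{\rs}$, with no uniform mechanism available. Your reduction is sound---the identity $\Ind(\bfA_1\times\bfA_2)=\Ind(\bfA_1)\cap\Ind(\bfA_2)$ in the product m.d.s.\ is immediate from the product measure---but the combinatorial step is precisely the content of the conjecture, so nothing has been gained beyond a reformulation. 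Your second route via ergodic averaging is likewise only a sketch of where one might look.

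In short: there is no proof in the paper to compare against, and your proposal does not supply one. What you have written is a reasonable analysis of why the conjecture is nontrivial, together with the observation that any proof (or counterexample) must address whether an arbitrary proper hereditary family $\F\subseteq\F_{\inf}$ satisfies the intersection property you isolate. If you want to make progress, a natural first step would be to search for a counterexample: a family $\F$ and sets $S_1,S_2\in\F$ for which no $K_1,K_2$ as above exist with $K_1\cap K_2\in\F$, and then check whether this lifts to an actual pair of $\F$-independent m.d.s.\ whose product fails $\F$-independence.
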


\section{Topological proof of minimal topological K systems are strongly
mixing}\label{c7}

In this section we prove Theorem~\ref{topological K to strongly mixing:thm} and
Corollary~\ref{upe to transitive:cor}.

For a cover $\mathcal{V}$ of a compact space $X$
by open subsets, we denote by $N(\mathcal{V})$ the minimal cardinality
of subcovers of $\mathcal{V}$.
Let $\F$ be a family.
A t.d.s. $(X,T)$ is called {\it $\F$-scattering} if for each
$F=\{a_1<a_2<\ldots\}\in\F$ and each finite cover $\mathcal U$
of $X$ by non-dense open subsets, one has $\lim_{n\to +\infty}
N(\bigvee_{i=1}^nT^{-a_i}\mathcal{U})=\infty$.
It was shown in \cite[Theorem 5.5]{HSY} using ergodic theory that topological K systems are
$\F_{\inf}$-scattering.  Combining this with the fact that a minimal
$\F_{\inf}$-scattering t.d.s. is
strongly mixing \cite[Theorem 5.6]{HY1},  one knows
that a minimal topological K system is strongly mixing \cite[Theorem 5.10]{HSY}.
Now we give a topological proof of the fact that a topological K
system is $\F_{\inf}$-scattering.

Recall that for
any $F=\{a_1<a_2<\ldots\}\in \F_{\inf}$ and any open cover
$\U$ of $X$, the {\it topological sequence entropy} of $T$ with respect to $F$ is defined
as
\[ h^{F}_{\top}(T,\mathcal{U})=\limsup_{n\to +\infty}\frac{\log N(\bigvee_{i=1}^nT^{-a_i}\U)}{n}.\]

\begin{thm} \label{topological K to strongly mixing:thm}
Let $(X,T)$ be a t.d.s., $n\ge 2$, $(x_1,\ldots, x_n)$ be an
$\F_{\pubd}$-independent tuple of $X$ with points pairwise distinct,  and
$U_1,\ldots, U_n$ be pairwise disjoint closed neighborhoods of
$x_1,\ldots,x_n$ respectively. Set $\mathcal{U}=\{U_1^c,\ldots, U_n^c\}$.
Then for any $F\in \F_{\inf}$, one has $h^{F}_{\top}(T,\mathcal{U})>0$.
Consequently, a topological K system is $\F_{\inf}$-scattering.
\end{thm}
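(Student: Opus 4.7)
The plan is to obtain an exponential lower bound on $N\bigl(\bigvee_{i=1}^m T^{-a_i}\mathcal{U}\bigr)$ valid for all sufficiently large $m$, which immediately yields $h^F_{\top}(T,\mathcal{U})>0$. I will execute this in three steps: (i) extract a positive upper Banach density independence set from the hypothesis, (ii) use Lemma~\ref{inf-B} to show that every initial segment $F_m:=\{a_1,\dots,a_m\}$ of $F$ contains an independence subset of linear size, and (iii) count subcovers of the join by exploiting the pairwise disjointness of the $U_j$'s.

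For step (i), the $\F_{\pubd}$-independence of $(x_1,\dots,x_n)$, together with the fact that $U_1,\dots,U_n$ are neighborhoods of $x_1,\dots,x_n$, supplies $S\in\Ind(U_1,\dots,U_n)$ with $d:=BD^*(S)>0$. For step (ii), fix $d'\in(0,d)$ and set $k_m:=\lfloor d'm\rfloor$. Since $BD^*(S)>d'$, arbitrarily long intervals $I\subseteq\Z_+$ satisfy $|S\cap I|/|I|\ge d'$, in particular some of length at least $N(d',k_m,F_m)$. Lemma~\ref{inf-B} then produces $p_m\in\Z$ with $|S\cap(F_m+p_m)|\ge k_m$, and shifting back by $-p_m$ via Lemma~\ref{ye1}(3) yields an independence subset $J_m\subseteq F_m$ with $|J_m|\ge k_m\ge d'm-1$.

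For step (iii), each cell of $\bigvee_{i=1}^m T^{-a_i}\mathcal{U}$ has the form $E_s=\bigcap_{i=1}^m T^{-a_i}U_{s(i)}^c$ for some $s\in\{1,\dots,n\}^{\{1,\dots,m\}}$. Given any $\tau\in\{1,\dots,n\}^{J_m}$, the independence of $J_m$ supplies $y_\tau\in X$ with $T^a y_\tau\in U_{\tau(a)}$ for every $a\in J_m$; by pairwise disjointness of the $U_j$, the containment $y_\tau\in E_s$ forces $s(i)\neq\tau(a_i)$ at every index $i$ with $a_i\in J_m$. Hence any subcover $\{E_{s^{(1)}},\dots,E_{s^{(K)}}\}$ of $X$ must, for each $\tau$, provide some $s^{(r)}$ whose restriction to $\{i:a_i\in J_m\}$ disagrees with $\tau$ pointwise. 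Each $s^{(r)}$ can serve only $(n-1)^{|J_m|}$ out of the $n^{|J_m|}$ many $\tau$'s, so $K\ge (n/(n-1))^{|J_m|}\ge (n/(n-1))^{d'm-1}$, giving $h^F_{\top}(T,\mathcal{U})\ge d'\log\bigl(\tfrac{n}{n-1}\bigr)>0$.

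The consequence follows quickly: for topological K $(X,T)$ (so $\F_{\pubd}$-independent of every order) and any finite non-dense open cover $\mathcal{V}=\{V_1,\dots,V_k\}$ of $X$, pick distinct $x_i\in X\setminus\overline{V_i}$ (possible since a non-trivial topological K system has no isolated points) together with pairwise disjoint closed neighborhoods $U_i\subseteq X\setminus V_i$; the tuple $(x_1,\dots,x_k)$ is $\F_{\pubd}$-independent, $\mathcal{V}$ refines $\{U_1^c,\dots,U_k^c\}$, and the first part forces $N\bigl(\bigvee_{i=1}^m T^{-a_i}\mathcal{V}\bigr)\ge N\bigl(\bigvee_{i=1}^m T^{-a_i}\{U_1^c,\dots,U_k^c\}\bigr)\to\infty$. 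The main obstacle is step (ii): upgrading the positive upper Banach density of the abstract set $S$ into a uniform, linear-in-$m$ lower bound on $|J_m|$. This is exactly the content of Lemma~\ref{inf-B}; without it one would only get positive sequence entropy along a subsequence of $m$'s rather than for all sufficiently large $m$.
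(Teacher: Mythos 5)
Your proof is correct and takes essentially the same route as the paper's: extract an independence set $S\in\Ind(U_1,\dots,U_n)$ of positive upper Banach density, use Lemma~\ref{inf-B} to plant linear-size independence subsets inside initial segments of $F$, and bound the number of subcover elements below by $(n/(n-1))^{|W|}$. The only differences are cosmetic --- you fix $m$ and extract $J_m\subseteq\{a_1,\dots,a_m\}$ where the paper fixes $k$ and finds $q_k\approx 2k/d$, and you spell out the covering-number count and the choice of the disjoint $U_i$'s for the scattering consequence, both of which the paper leaves implicit.
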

\begin{proof}Since $(x_1,\ldots, x_n)$ is an $\F_{\pubd}$-independent tuple,
there exists an $S\in \Ind(U_1,\ldots, U_n)$ with positive upper Banach density $d$.
Let $F=\{a_1<a_2<\dots\}$ in $\F_{\inf}$.
Then by Lemma~\ref{inf-B} for any $k\in \N$, setting $q_k$ to be the smallest integer no less
than $2k/d$, we can find
$p_k\in\Z$ and $W_k\subseteq \{a_1, a_2, \ldots, a_{q_k}\}$ with $|W_k|=k$ and $p_k+W_k\subseteq S$.
Thus, $W_k\in \Ind(U_1,\ldots,
U_n)$. This implies that $$h^{F}_{\top}(T,\mathcal{U})\ge
\limsup_{k\to +\infty}\frac{1}{q_k}\log N(\bigvee_{j\in W_k}T^{-j}\mathcal{U})\ge
\limsup_{k\to +\infty}
\frac{1}{q_k}\log (\frac{n}{n-1})^k=\frac{2}{d}\log \frac{n}{n-1}>0.$$

Now for any finite open cover $\mathcal{V}$
of $X$ by non-dense open subsets, we may
find some $n\ge 2$, pairwise distinct $x_1,\ldots, x_n$ in $X$ and pairwise disjoint closed neighborhoods
$U_1,\ldots, U_n$ of $x_1,\ldots, x_n$ respectively such that $\mathcal{V}$ refines
$\U=\{U^c_1, \dots, U^c_n\}$.
If $(X,T)$ is topological K, then each tuple in $X$ is $\F_{\pubd}$-independent.
Thus for any $F=\{a_1<a_2<\dots\}$ in $\F_{\inf}$, by the above paragraph we have
$h^{F}_{\top}(T,\mathcal{V})\ge h^{F}_{\top}(T,\mathcal{U})>0$.
This implies that $N(\bigvee_{i=1}^m
T^{-a_i}\mathcal{V})\to \infty$ as $m\to +\infty$, i.e., $(X, T)$ is
$\F_{\inf}$-scattering.
\end{proof}

Let $\F$ be a family. A t.d.s. $(X, T)$ is called {\it $\F$-transitive}
if for any nonempty open subsets $U$ and $V$ of $X$, one has
$N(U, V)\in \F$; it is called {\it mildly mixing} if its product with
any transitive t.d.s. is transitive.
It was shown in \cite[Theorem 7.5]{HY2} that a u.p.e. system is mildly mixing.
From \cite[Theorem 7.3]{HY2} or \cite[Theorem 3.16]{KL2} one knows that
a t.d.s. is u.p.e. if and only if it is $\F_{\pubd}$-independent of order $2$.
Denote by $\Delta$ the family in $\Z_+$ generated by the sets $F-F:=\{a-b: a,b \in F, a-b>  0\}$
for all $F\in \F_{\inf}$. By \cite[Theorem 6.6]{HY1} every $\Delta^*$-transitive
system is mildly mixing. Now we strengthen the above result to show
that every t.d.s. being $\F_{\pubd}$-independent of order $2$ is $\Delta^*$-transitive.
For this we need the following proposition, which appeared in \cite[page 84]{HF} (see also
\cite[Proposition 2.3]{YZ1}) and also follows directly from Lemma~\ref{inf-B}.

\begin{prop}\label{pubd}
If $F\in \F_{\pubd}$,  then $F-F$ is
in $\Delta^*$.
\end{prop}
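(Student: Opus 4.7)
The statement asserts $F-F\in\Delta^*$ whenever $BD^*(F)>0$. My first move will be to unpack the duality: by definition of $\Delta$, a set $A\subseteq\Z_+$ belongs to $\Delta^*$ precisely when $A$ meets every set of the form $E-E$ with $E$ an infinite subset of $\Z_+$. So the task reduces to showing that, given $F$ with $BD^*(F)=d>0$ and any infinite $E\subseteq\Z_+$, one has $(F-F)\cap(E-E)\neq\emptyset$; equivalently, there exist $e<e'$ in $E$ and $x\in F$ with $x+(e'-e)\in F$.

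Next I would invoke Lemma~\ref{inf-B}, which is perfectly tailored to this situation. Fix any $d'$ with $0<d'<d$ and choose a finite subset $E'=\{e_1<e_2<\dots<e_m\}\subseteq E$ large enough that $d'm>2$; such an $E'$ exists because $E$ is infinite. Applying Lemma~\ref{inf-B} with parameters $d'$, $k=2$, and finite set $E'$ produces an $N=N(d',2,E')\in\N$. Since $BD^*(F)=d>d'$, the definition of upper Banach density supplies a nonempty finite interval $I\subseteq\Z_+$ with $|I|\ge N$ and $|F\cap I|/|I|\ge d'$. Setting $S=F\cap I$, the lemma yields some $p\in\Z$ with $|S\cap(E'+p)|\ge 2$.

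From this I can read off the conclusion directly: pick $i<j$ in $\{1,\dots,m\}$ with both $e_i+p$ and $e_j+p$ lying in $S\subseteq F$. Then $e_j-e_i=(e_j+p)-(e_i+p)\in F-F$, while $e_j-e_i\in E'-E'\subseteq E-E$. Hence $(F-F)\cap(E-E)\neq\emptyset$, which is exactly the condition identified in the first paragraph. Since $E\in\F_{\inf}$ was arbitrary, $F-F\in\Delta^*$ as claimed.

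There is no real obstacle here: the proposition is essentially a reformulation of Lemma~\ref{inf-B}, and the only care needed is the standard one for upper Banach density, namely to pass from the $\limsup$ to a fixed lower bound $d'<d$ on a single interval before applying the lemma. I would not attempt a direct pigeonhole argument (translates $F-e_i$ for $e_i\in E'$) because the route through Lemma~\ref{inf-B} is more concise and keeps the exposition consistent with the earlier use of that lemma in the proof of Theorem~\ref{topological K to strongly mixing:thm}.
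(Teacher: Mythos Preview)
Your proof is correct and follows exactly the route indicated in the paper, which simply states that the proposition ``follows directly from Lemma~\ref{inf-B}'' without spelling out the details. Your unpacking of the duality (reducing $F-F\in\Delta^*$ to showing $(F-F)\cap(E-E)\neq\emptyset$ for every infinite $E$) and the application of Lemma~\ref{inf-B} with $k=2$ to a sufficiently large finite subset $E'\subseteq E$ is precisely how one makes that remark precise.
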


\begin{cor} \label{upe to transitive:cor}
A  t.d.s. being $\F_{\pubd}$-independent of order $2$ is $\Delta^*$-transitive.
\end{cor}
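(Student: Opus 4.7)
The plan is to chain together three facts that the excerpt has already put in place: the hypothesis, the definition of independence, and Proposition~\ref{pubd}. Let $U,V$ be nonempty open subsets of $X$; we want to show $N(U,V)\in\F_{\pubd}^{*}$\ldots more precisely $N(U,V)\in\Delta^{*}$. By $\F_{\pubd}$-independence of order $2$, we pick some $S\in\Ind(U,V)\cap\F_{\pubd}$.

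The central observation is that $S-S\subseteq N(U,V)$. Indeed, given $a<b$ in $S$, the set $\{a,b\}$ is an independence set for $(U,V)$, so with the choice $s(a)=1,\ s(b)=2$ (using $A_1=U,\ A_2=V$) we obtain a point $x\in T^{-a}U\cap T^{-b}V$. Then $T^{a}x\in U$ and $T^{b-a}(T^{a}x)=T^{b}x\in V$, giving $b-a\in N(U,V)$.

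Now invoke Proposition~\ref{pubd}: since $S\in\F_{\pubd}$, we have $S-S\in\Delta^{*}$. Because $\Delta^{*}$ is a family (hence hereditary upward), the inclusion $S-S\subseteq N(U,V)$ forces $N(U,V)\in\Delta^{*}$. As $U,V$ were arbitrary nonempty open sets, $(X,T)$ is $\Delta^{*}$-transitive.

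There is no real obstacle here: the only content beyond bookkeeping is the implication ``$S$ is an independence set $\Rightarrow$ $S-S\subseteq N(U,V)$'', which is just the two-point case of the independence definition, together with the fact that $\F_{\pubd}$-independence of order $2$ gives such an $S$ with positive upper Banach density so that Proposition~\ref{pubd} applies. The only subtlety to double-check is that $\Delta^{*}$ really is hereditary upward, which is immediate from the general fact (noted in Section~2) that $\F^{*}$ is a family whenever $\F$ is.
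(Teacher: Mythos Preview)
Your proof is correct and follows essentially the same approach as the paper's own proof: pick $S\in\Ind(U,V)\cap\F_{\pubd}$, observe that $S-S\subseteq N(U,V)$, and apply Proposition~\ref{pubd}. You have simply spelled out in detail the step the paper dismisses with ``Clearly $F-F\subseteq N(U,V)$'' and made explicit the use of upward heredity of $\Delta^*$.
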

\begin{proof} Let $(X,T)$ be $\F_{\pubd}$-independent of order $2$. Then for any nonempty open subsets
$U$ and $V$ of $X$, there exists an $F\in \Ind(U, V)\cap \F_{\pubd}$.
Clearly $F-F \subseteq N(U,V)$. By Proposition
\ref{pubd} one has $N(U,V)\in \Delta^*$.
\end{proof}

To end this section we make the following remark. Denote by $\F_{\rm ss}$ the family
consisting of
$S\subseteq\Z_+$ satisfying that for each  $F\in\F_{\inf}$ and
each $k\in \N$ there exists $p_k\in \Z$ with $|F\cap
(S+p_k)|\ge k$.
It is clear that for any $S\in \F_{\rm ss}$ one has
$S-S\in \Delta^*$.

\begin{rem}
One obvious corollary of Lemma \ref{inf-B} is that  $\F_{\pubd}\subseteq \F_{\rm ss}$. We
remark that there exists an $S\in \F_{\rm ss}$ containing no arithmetic
progression of length $3$ (and thus having zero upper Banach density by Roth's theorem \cite{Roth}).
\end{rem}
\begin{proof}
For $k\ge 3$ set
$$S_k=\{\{a_1,a_2,\ldots,a_k\}\subseteq \N: a_j-a_i>a_i-a_s>0\ \text{for all}\ 1 \le s<i<j\le k\}.$$

Each $S_k$ is countable. Enumerate $\bigcup_{k\ge 3} S_k$ as $\{A_1,A_2,\ldots\}$. Now let
$\{t_i\}_{i\in \N}$ be a sequence in $\N$ and set $S=\bigcup_{i\in \N}(A_i+t_i)$.

Now assume that
$F$ is an infinite subset of $\Z$. For each
$k\ge 3$, inductively we can find $b_1, b_2, \dots, b_k\in F$
such that $b_j-b_i>b_i-b_s>0$ for all $1 \le s<i<j\le k$.
This implies
that there exists $p_k\in \Z$ with $|F\cap (S+p_k)|\ge k$. Thus $S$ is in $\F_{\rm ss}$.

If we choose $t_i$ to  grow rapidly enough, it is easy to
check that $S$ does not contain any arithmetic progression of length $3$.
\end{proof}

We remak that the set of prime numbers is not in $\F_{\rm ss}$. In fact
any $S=\{a_1<a_2<\ldots\}\in \F_{\inf}$ with $a_{i+1}-a_i\to +\infty$
as $i\to +\infty$ is not in $\F_{\rm ss}$. Actually one can find an
 $F\in \F_{\inf}$  such that $|F\cap (S+p)|\le 2$
for all $p\in \Z$. To find such an $F$, start with any $b_1<b_2$ in $\N$.
Since $a_{i+1}-a_i\to +\infty$ as $i\to +\infty$, there are only finitely many $i$
and $j$
satisfying $a_j-a_i=b_2-b_1.$ From this we can find $b_3>b_2$ in $\N$
such that $|\{b_1, b_2, b_3\}\cap(S+p)|\le 2$ for every
$p\in \Z$. Inductively we find $b_3<b_4<b_5<\ldots$ in $\N$ such
that $|\{b_1, b_2, \dots, b_k\}\cap(S+p)|\le 2$ for every
$k\in \N$ and $p\in \Z$.

\section{Appendix}

In this appendix we prove Theorem~\ref{infinite words:lemma}.

\begin{proof}[Proof of Theorem~\ref{infinite words:lemma}]
Take $d\in \N$ with $p^d>\frac{(d+p)\ell}{p}$. Let $m>d$ be large
enough, which we shall determine later. It suffices to show that
for every $n\in \N$ there exists $x_n\in \Sigma_p$ with $x_n[j,
j+m-1]\not \in A_j$ for all $0\le j\le n$. Then any limit point of
the sequence $\{x_n\}_{n\in \Z_+}$ in $\Sigma_p$ satisfies the
requirement.

As convention, set $\Lambda_p^0$ to be the one element set consisting of the empty word.
For any $k, s, t\in \Z_+$ and any $y\in \Lambda_p^k$, set $|y|=k$ and denote by
$\Lambda_p^sy\Lambda_p^{t}$ the subset of $\Lambda_p^{s+k+t}$ consisting of elements of the form $wyz$
for some $w\in \Lambda_p^s$ and $z\in \Lambda_p^{t}$.

For each $n\in \Z_+$ set $B_n$ to be the subset of $\Lambda_p^{m}$ consisting of elements
$b$ for which there is no $x\in \Sigma_p$
with $x[n, n+m-1]=b$ and $x[j, j+m-1]\not \in A_j$ for all $0\le j\le n$.
Set
$C_n$ to be the subset of $\Lambda_p^{m-1}$ consisting of elements $c$ for which $\Lambda_pc$ is contained in $B_n$.
Note that $C_n$ is  exactly the set of elements $c\in \Lambda_p^{m-1}$ for which there is no $x\in \Sigma_p$
with $x[n+1, n+m-1]=c$ and $x[j, j+m-1]\not \in A_j$ for all $0\le j\le n$.
It follows that
\begin{eqnarray} \label{comb1:eq}
B_{n+1}=A_{n+1}\cup (\bigcup_{c\in C_n}c\Lambda_p).
\end{eqnarray}
Then
$$p|C_{n+1}|\le |B_{n+1}|=|A_{n+1}\cup (\bigcup_{c\in C_n}c\Lambda_p)|\le |A_{n+1}|+p|C_n|\le \ell+p|C_n|$$
for all $n\in \Z_+$. Clearly $|C_0|\le \frac{|B_0|}{p}\le \frac{\ell}{p}$.
Inductively one gets that $|C_n|\le \frac{(n+1)\ell}{p}$ for all $n\in \Z_+$.

Set $D_n$ to be the subset of $\bigcup^{m-1}_{k=0}\Lambda_p^k$ consisting of elements $y$ such that
$y\Lambda_p^{m-1-|y|}\subseteq C_n$ but $y[1, |y|-1]\Lambda_p^{m-|y|}\nsubseteq C_n$.
We put $\Lambda_p^0\subseteq D_n$ exactly when $C_n=\Lambda_p^{m-1}$.
Note that
$C_n$ is the disjoint union of $y\Lambda_p^{m-1-|y|}$ for $y\in D_n$.
For each $0\le k\le m-1$ set $D_{n, k}=\{y\in D_n: |y|=k\}$.
We claim that
$$|D_{n+1, k}|\le \ell+\sum^{m-1}_{j=k+1}|D_{n, j}|$$
for
all $n\in \Z_+$ and $0\le k\le m-1$. This is clearly true if $k=0$ or $D_{n, 0}\neq \emptyset$.
Thus assume that
$1\le k\le m-1$ and $D_{n, 0}=\emptyset$.
Let $y\in D_{n+1, k}$.
Then $y\Lambda_p^{m-1-k}$ is contained in $C_{n+1}$, and hence
$\Lambda_py\Lambda_p^{m-1-k}$
is contained in $B_{n+1}= A_{n+1}\cup (\bigcup_{z\in
D_n}z\Lambda_p^{m-|z|})$. If $\Lambda_py\Lambda_p^{m-1-k}$ has
nonempty intersection with $z_j\Lambda_p^{m-|z_j|}$, $j=1, 2,
\dots, p$, for some pairwise distinct $z_1, z_2, \dots, z_p\in
D_n$ with $\max_{1\le j\le p}|z_j|\le k$, then $\Lambda_py[1,
k-1]\Lambda_p^{m-k}$ is contained in
$\bigcup^p_{j=1}z_j\Lambda_p^{m-|z_j|}$, and hence $y[1,
k-1]\Lambda_p^{m-k}$ is contained in $C_{n+1}$, which contradicts
the assumption $y\in D_{n+1}$. Therefore
$\Lambda_py\Lambda_p^{m-1-k}$ has nonempty intersection with
$z\Lambda_p^{m-|z|}$ for at most $p-1$ elements $z\in D_n$ with
$|z|\le k$. If $\Lambda_py\Lambda_p^{m-1-k}$ does have nonempty
intersection with $z\Lambda_p^{m-|z|}$ for some $z\in D_n$ with
$|z|\le k$, then, since $|z|\ge 1$, one sees that
$\Lambda_py\Lambda_p^{m-1-k}\cap z\Lambda_p^{m-|z|}$ is equal to
$jy \Lambda_p^{m-1-k}$ for some $j\in \Lambda_p$. Therefore
$$|\Lambda_py\Lambda_p^{m-1-k}\cap (\bigcup_{z\in D_n, |z|\le k}z
\Lambda_p^{m-|z|})|\le (p-1)|y\Lambda_p^{m-1-k}|=(p-1)p^{m-1-k},$$
and hence
\begin{eqnarray} \label{comb2:eq}
|(\bigcup_{y\in D_{n+1, k}}\Lambda_py\Lambda_p^{m-1-k})\cap (\bigcup_{z\in D_n, |z|\le k}z
\Lambda_p^{m-|z|})|\le |D_{n+1, k}|(p-1)p^{m-1-k}.
\end{eqnarray}
Note that $\bigcup_{y\in D_{n+1, k}}\Lambda_py\Lambda_p^{m-1-k}\subseteq
\bigcup_{c\in C_{n+1}}\Lambda_pc\subseteq B_{n+1}$. Therefore
\begin{eqnarray} \label{comb3:eq}
\nonumber  |\bigcup_{y\in D_{n+1, k}}\Lambda_py\Lambda_p^{m-1-k}|
&=& |(\bigcup_{y\in D_{n+1, k}}\Lambda_py\Lambda_p^{m-1-k})\cap B_{n+1}|\\
\nonumber  &\overset{(\ref{comb1:eq})}=& |(\bigcup_{y\in D_{n+1, k}}
\Lambda_py\Lambda_p^{m-1-k})\cap (A_{n+1}\cup (\bigcup_{z\in D_n}z\Lambda_p^{m-|z|})|\\
\nonumber &\le & |(\bigcup_{y\in D_{n+1, k}}\Lambda_py\Lambda_p^{m-1-k})\cap (\bigcup_{z\in D_n,
|z|\le k}z\Lambda_p^{m-|z|})|\\
\nonumber & & +|(\bigcup_{y\in D_{n+1, k}}\Lambda_py\Lambda_p^{m-1-|y|})
\cap (A_{n+1}\cup (\bigcup_{z\in D_n, |z|>k}z\Lambda_p^{m-|z|}))|\\
\nonumber &\overset{(\ref{comb2:eq})}\le &|D_{n+1, k}|(p-1)p^{m-1-k}+ |A_{n+1}\cup
(\bigcup_{z\in D_n, |z|>k}z\Lambda_p^{m-|z|})|\\
\nonumber &\le &|D_{n+1, k}|(p-1)p^{m-1-k}+\ell +\sum_{z\in D_n, |z|>k}p^{m-|z|}\\
&=& |D_{n+1, k}|(p-1)p^{m-1-k}+\ell+\sum^{m-1}_{j=k+1}|D_{n, j}|p^{m-j}.
\end{eqnarray}
Since the sets $ \Lambda_py\Lambda_p^{m-1-k}$ for $y\in D_{n+1, k}$ are pairwise disjoint, we have
\begin{eqnarray} \label{comb4:eq}
|\bigcup_{y\in D_{n+1, k}}\Lambda_py\Lambda_p^{m-1-k}|=\sum_{y\in D_{n+1, k}}|
\Lambda_py\Lambda_p^{m-1-k}|=|D_{n+1, k}|p^{m-k}.
\end{eqnarray}
From (\ref{comb3:eq}) and (\ref{comb4:eq}) we get
$$ |D_{n+1, k}|p^{m-1-k}\le \ell+\sum^{m-1}_{j=k+1}|D_{n, j}|p^{m-j},$$
and hence
$$ |D_{n+1, k}|\le p^{k+1-m}\ell+\sum^{m-1}_{j=k+1}|D_{n, j}|p^{|k|+1-j}\le \ell+\sum^{m-1}_{j=k+1}|D_{n, j}|,$$
proving the claim.
It follows inductively that $|D_{n, m-k}|\le 2^{k-1}\ell$ for
all $n\in \Z_+$ and $1\le k\le m-1$.

We need to show that $B_n\neq \Lambda_p^m$ for all $n\in \Z_+$,
equivalently, $C_n\neq \Lambda_p^{m-1}$ for all $n\in \Z_+$. In
fact, we claim that for every $n\in \Z_+$, there are no $d\le
d'\le m-1$ and $y\in \Lambda_p^{m-1-d'}$ with
$y\Lambda_p^{d'}\subseteq C_n$.

We argue by contradiction. So assume that there are $n\in \Z_+$, $d\le d'\le m-1$ and
$y\in \Lambda_p^{m-1-d'}$ with $y\Lambda_p^{d'}\subseteq C_n$. Let $n_0$ be the
smallest such $n$, and let $d'$ and $y$ witness $n_0$. Replacing $y$ by $yw$ for any $w\in \Lambda_p^{d'-d}$,
we may assume that $d'=d$.

Since $\frac{(n_0+1)\ell}{p}\ge |C_{n_0}|\ge p^d>\frac{(d+p)\ell}{p}$, we have $n_0\ge d+p$. Denote $n_0-d$ by $n_1$.
For each $n_0\ge n\ge n_1$, set $E_n$ to be the subset
of $C_n$ consisting of $c$ satisfying $c[1+n_0-n, m-1-d+n_0-n]=y$. The assumption in the above paragraph says that
$E_{n_0}=y\Lambda_p^d$ and hence $|E_{n_0}|=p^d$. For each $n_0>n\ge n_1$,
we have
\begin{eqnarray*}
\bigcup_{c\in E_{n+1}}\Lambda_pc \subseteq \bigcup_{c\in C_{n+1}}\Lambda_pc\subseteq
B_{n+1}=A_{n+1}\cup (\bigcup_{c'\in C_n}c'\Lambda_p).
\end{eqnarray*}
Note that if $\Lambda_pc\cap c'\Lambda_p\neq \emptyset$ for some
$c\in E_{n+1}$ and $c'\in C_n$, then $c'$ is in $E_n$. Thus
$$\bigcup_{c\in E_{n+1}}\Lambda_pc \subseteq A_{n+1}\cup (\bigcup_{c'\in E_n}c'\Lambda_p),$$
and hence
$$p|E_{n+1}|=|\bigcup_{c\in E_{n+1}}\Lambda_pc|\le |A_{n+1}\cup (\bigcup_{c'\in E_n}c'\Lambda_p)|
\le |A_{n+1}|+p|E_n|\le \ell+p|E_n|.$$
It follows inductively that $|E_n|\ge |E_{n_0}|-\frac{(n_0-n)\ell}{p}=p^d-\frac{(n_0-n)\ell}{p}$
for all $n_0\ge n\ge n_1$.
In particular, $|E_{n_1}|\ge p^d-\frac{d\ell}{p}> \ell$.

Denote $\max(0, d+n_1-m+1)$ by $n_2$. For each $n_1\ge n\ge n_2$
denote by $k_n$ the largest number $k$ for which there exists a
subset $F$ of $C_n$ such that $|F|=k$ and $c[1+d+n_1-n, m-1]$ does
not depend on $c\in F$. Taking $F$ to be $E_{n_1}$ we see that
 $k_{n_1}\ge |E_{n_1}|> \ell$.
We claim that $pk_{n+1}\le k_n+\ell$ for all $n_1>n\ge n_2$. Take $F\subseteq C_{n+1}$  such that
$|F|=k_{n+1}$ and $c[d+n_1-n, m-1]$ does not depend on $c\in F$.  Then the set $W:=\bigcup_{c\in F}\Lambda_pc$
has $pk_{n+1}$ elements and is contained in $B_{n+1}=A_{n+1}\cup (\bigcup_{c'\in C_n}c'\Lambda_p)$.
Set $F'=\{c'\in C_n: c'\Lambda_p\cap W\neq \emptyset\}$. Then $c[1+d+n_1-n, m-1]$ does not depend
on $c\in F'$ and hence $|F'|\le k_n$. Since $d+n_1-n\le d+n_1-n_2\le m-1$, all the elements in $W$
have the same right end.
It follows that for any $c'\in F'$, one
has $|W\cap c'\Lambda_p|=1$.
Thus
\begin{eqnarray*}
pk_{n+1}&=&|W|=|W\cap (A_{n+1}\cup (\bigcup_{c'\in C_n}c'\Lambda_p))|
=|W\cap (A_{n+1}\cup (\bigcup_{c'\in F'}c'\Lambda_p))| \\
&\le &
|A_{n+1}|+\sum_{c'\in F'}|W\cap c'\Lambda_p|=|A_{n+1}|+|F'|\le \ell +k_n.
\end{eqnarray*}
This proves the claim. Inductively, we get $k_n\ge p^{n_1-n}+\ell$ for
all $n_1\ge n\ge n_2$. In particular, $|C_{n_2}|\ge k_{n_2}\ge p^{n_1-n_2}+\ell$. Since
$|C_0|\le \frac{\ell}{p}$, we have $n_2>0$.
Thus $n_2=d+n_1-m+1$, and hence $|C_{n_2}|\ge k_{n_2}\ge p^{m-d-1}+\ell$.

Since $n_2\le n_1<n_0$, according to the choice of $n_0$, $D_{n_2, k}$ is empty for all $0\le k\le m-1-d$.  Thus
\begin{eqnarray*}
|C_{n_2}|&=&|\bigcup^{m-1}_{k=0}(\bigcup_{y\in D_{n_2, k}}y\Lambda_p^{m-1-k})|\\
&=& |\bigcup^{m-1}_{k=m-d}(\bigcup_{y\in D_{n_2, k}}y\Lambda_p^{m-1-k})|\\
&=& \sum^{m-1}_{k=m-d}|D_{n_2, k}|p^{m-1-k}\\
&\le & \sum^{m-1}_{k=m-d} 2^{m-1-k}\ell \cdot p^{m-1-k}=\ell \sum^{d-1}_{j=0}(2p)^j=\frac{(2p)^d-1}{2p-1}\cdot \ell.
\end{eqnarray*}
This contradicts $|C_{n_2}|\ge p^{m-d-1}+\ell$ once we take $m$ large enough such that
$\frac{(2p)^d-1}{2p-1}\cdot \ell < p^{m-1-d}+\ell$.
A simple calculation shows that we may take $d=\ell+1$ and $m\ge 4\ell+2$.
\end{proof}

%
%
%
%
%
%


\end{document}